\documentclass[a4paper, reqno]{amsart}
\usepackage{amsmath}
\usepackage{mathtools}
\usepackage{amssymb}
\usepackage{enumerate}
\usepackage{amsthm}
\usepackage{mathrsfs}
\usepackage{cancel}
\usepackage{array}
\usepackage{color}
\usepackage{geometry}

\DeclareMathAlphabet{\mathpzc}{OT1}{pzc}{m}{it}
\makeatletter
\newcommand*{\rom}[1]{\expandafter\@slowromancap\romannumeral #1@}
\makeatother
\makeatletter

\setlength{\unitlength}{1 cm} \setlength{\topmargin}{-0.4 in}
\setlength{\textheight}{9 in} \setlength{\textwidth}{6 in}
\setlength{\oddsidemargin}{0.1 in}
\setlength{\evensidemargin}{0.1in}

\newcommand{\R}{\mathbb{R}}

\theoremstyle{definition}

\theoremstyle{plain}
\newtheorem*{thm*}{Theorem}
\theoremstyle{plain}
\newtheorem{thm}{Theorem}
\theoremstyle{plain}
\newtheorem{lem}[thm]{Lemma}
\theoremstyle{plain}

\theoremstyle{plain}
\newtheorem{prop}[thm]{Proposition}
\theoremstyle{remark}

\DeclareMathOperator*{\esssup}{ess\,sup}

\title[]{Nonrelativistic limit of solitary waves for nonlinear Maxwell-Klein-Gordon equations}

\author[S. Jin]{Sangdon Jin}
\address{Department of Mathematical Sciences, KAIST, Daejeon 34141, Republic of Korea}
\email{sdjin@kaist.ac.kr}
\author[J. Seok]{Jinmyoung Seok}
\address{Department of Mathematics, Kyonggi University, Suwon 16227, Republic of Korea}
\email{jmseok@kgu.ac.kr}

\keywords{Maxwell-Klein-Gordon; Schrodinger-Poisson; nonrelativistic limit; solitary wave}

\begin{document}

\begin{abstract}
We study the nonrelativistic limit of solitary waves from Nonlinear Maxwell-Klein-Gordon equations (NMKG) to Nonlinear Schr\"odinger-Poisson equations (NSP). 
It is known that the existence or multiplicity of positive solutions depends on the choices of parameters the equations contain. 
In this paper, we prove that for a given positive solitary wave of NSP, which is found in Ruiz's work \cite{R}, there corresponds a family of positive solitary waves of NMKG under the nonrelativistic limit.  Notably, our results contain a new result of  existence of positive solutions to (NMKG) with lower order nonlinearity.
\end{abstract}

\maketitle

\section{Introduction}
Nonlinear Maxwell-Klein-Gordon equations are written by 
\begin{equation}\tag{NMKG}
\left\{\begin{aligned}
& D_\alpha D^{\alpha}\phi = (mc)^2\phi -|\phi|^{p-2}\phi, \\
& \partial^{\beta}F_{\alpha\beta} = \frac{q}{c}\text{Im}(\phi\overline{D_\alpha\phi}),
\end{aligned}\right. \quad \text{in } \R^{1+3}.
\end{equation}
where $D_\alpha \coloneqq \partial_\alpha +\frac{q}{c}iA_\alpha, \alpha = 0, 1, 2, 3$ and
$F_{\alpha\beta} \coloneqq \partial_\alpha A_\beta-\partial_{\beta}A_{\alpha}$.
Here, $m>0$ represents the mass of a particle, $q>0$ is a unit charge and $c > 0$ is the speed of light. 
We write $\partial_0 = \frac{\partial}{c\partial t}$, $\partial_i = \frac{\partial}{\partial x_{j}}, j = 1, 2, 3$.
Indices are raised under the Minkowski metric $g_{\alpha\beta} = \text{diag}(-1, 1, 1, 1)$, i.e., $X^{\alpha} \coloneqq g_{\alpha\beta}X_{\beta}$.
If we pay attention to the electrostatic situation, that is, $A_1 = A_2 = A_3 = 0$,
then NMKG is reduced to
\begin{equation}\label{NMKG-electrostatic}
\left\{\begin{aligned}
&\left(-\frac{\partial^2}{c^2\partial t^2}+\Delta\right)\phi -\frac{2q}{c^2}iA_0 \frac{\partial\phi}{\partial t} -\frac{q}{c^2}i\frac{\partial A_0}{\partial t} \phi 
+\left(\frac{q}{c}\right)^2A_0^2\phi = (mc)^2\phi -|\phi|^{p-2}\phi, \\
& -\Delta A_0= \frac{q}{c^2}\textup{Im}\left(\phi\overline{\frac{\partial\phi}{\partial t}}\right) -\left(\frac{q}{c}\right)^2A_0|\phi|^2,
\end{aligned}\right. \quad \text{in } \R^{1+3}.
\end{equation}

This paper is concerned with the nonrelativistic limit for NMKG in electrostatic case.  
By modulating the solution as $\phi(t,x) = e^{imc^2t}\psi(t,x)$, the system of equations \eqref{NMKG-electrostatic} transforms into
\begin{equation}\label{NMKG-electrostatic-modulated}
\left\{\begin{aligned}
&-\frac{\partial^2\psi}{c^2\partial t^2} -2mi\frac{\partial\psi}{\partial t} +\Delta \psi  +2qmA_0\psi -\frac{2q}{c^2}iA_0\frac{\partial\psi}{\partial t}   
-\frac{q}{c^2}i\frac{\partial A_0}{\partial t} \psi +\left(\frac{q}{c}\right)^2A_0^2\psi = -|\psi|^{p-2}\psi, \\
&-\Delta A_0 +\left(\frac{q}{c}\right)^2|\psi|^2A_0 = \frac{q}{c^2}\textup{Im}\left(\psi \overline{\frac{\partial\psi}{\partial t}}\right)-qm|\psi|^2.
\end{aligned}\right.
\end{equation}
Then, taking so-called nonrelativistic limit $c\to\infty$,  the relativistic system \eqref{NMKG-electrostatic-modulated} formally converges to 
 nonlinear equations of Schr\"odinger type, called the nonlinear Schr\"odinger-Poisson equations 
\begin{equation}\tag{NSP}
\left\{\begin{aligned}
&-2mi\frac{\partial\psi}{\partial t} +\Delta \psi  +2qmA_0\psi  = -|\psi|^{p-2}\psi, \\
&-\Delta A_0 = -qm|\psi|^2,
\end{aligned}\right.  \quad \text{in } \R^{1+3}.
\end{equation}
When the nonlinear potential term $|\psi|^{p-2}\psi$ is absent, the rigorous justifications of this limit are carried out by Masmoudi-Nakanish \cite{MN} and Bechouche-Mauser-Selberg \cite{BMS}.
As for the stuides on the nonlinear Klein-Gordon equations without the Maxwell gauge terms ($A_\mu = 0, \mu=0,1,2,3$), we refer to a series for works \cite{MNO, MN1, N}.  

The main interest of this paper lies in investigating  the correspondence between solitary waves of NMKG and NSP under the nonrelativistic limit $c\to\infty$.
During recent two decades, existence theories for solitary waves of NMKG and NSP have been well developed. 
Inserting the standing wave ansatz $\psi(t,x) = e^{-i\mu t}u(x),\, u \in \R$ into \eqref{NMKG-electrostatic-modulated}, we get
\begin{equation}\label{1}
\left\{\begin{aligned}
&-\Delta u+\Big(m^2c^2-\big(\frac{mc^2-\mu}{c}+\frac{q\Phi}{c}\big)^2\Big)u-|u|^{p-2}u=0,  \\
&-\Delta \Phi+\frac{q^2}{c^2}u^2\Phi=-\frac{q}{c}\left(\frac{mc^2-\mu}{c}\right)u^2,
\end{aligned}\right. \qquad \mbox{ in }\R^3.
\end{equation}
Lax-Milgram theorem implies that for each $u\in H^1(\R^3)$,  there exists a unique solution $\Phi_u\in D^{1,2}(\R^3)$ of
\begin{equation}\label{sta3}
-\Delta \Phi+\frac{q^2}{c^2}u^2\Phi=-q(m-\frac{\mu}{c^2})   u^2 \mbox{ in } \R^3.
\end{equation}
Then, by \cite[Proposition 3.5]{BF}, $(u,\Phi)\in H^1(\R^3)\times D^{1,2}(\R^3)$ is a solution of \eqref{1} if and only if $u\in H^1(\R^3)$ is a critical point of $I_c$, and $\Phi=\Phi_u$, where
\begin{align*}
I_c(u)&=\frac12\int_{\R^3}|\nabla u|^2+\Big(2m\mu-\frac{\mu^2}{c^2}\Big)u^2-q\Big(m-\frac{\mu}{c^2}\Big) u^2 \Phi_u dx-\frac{1}{p}\int_{\R^3}|u|^pdx,
\end{align*}
which is a $C^1$ functional on $H^1(\R^3)$.
We note that the system of equations \eqref{1} is equivalent to the single nonlocal equation
\begin{equation}\label{sta1}
-\Delta u+\Big(m^2c^2-\big(\frac{mc^2-\mu}{c}+\frac{q\Phi_u}{c}\big)^2\Big)u-|u|^{p-2}u=0 \mbox{ in }\R^3.
\end{equation}

Before stating the existence results for \eqref{1}, we simplify the parameters by denoting $\bar{m} = mc$, $e = q/c$ and $\omega = (mc^2-\mu)/c$ to rewrite  \eqref{1} as
\begin{equation}\label{2}
\left\{\begin{aligned}
&-\Delta u+\left(\bar{m}^2-\left(\omega+e\Phi\right)^2\right)u-|u|^{p-2}u=0,\\
&-\Delta \Phi+e^2u^2\Phi=-e\omega u^2,
\end{aligned}\right. \qquad \mbox{ in }\R^3,
\end{equation}
where $e > 0$, $\bar{m} > 0$ and $\omega > 0$ such that $\bar{m} > \omega$.
The corresponding action functional is given by
\[
I_{\bar{m},e,\omega}(u)=\frac12\int_{\R^3}|\nabla u|^2+(\bar{m}^2-\omega^2)u^2-e\omega u^2 \Phi_u dx-\frac{1}{p}\int_{\R^3}|u|^pdx.
\]

For fixed $e > 0$, Benci and Fortunato \cite{BF} first  proved by applying critical point theory to $I_{\bar{m},e,\omega}$ that there exist infinitely many solutions of \eqref{2} for $4 < p <6$ and $0 < \omega < \bar{m}$. 
This result is extended by D'Aprile and Mugnai \cite{DM1} to the cases $4\leq p < 6$ and $0 < \omega < \bar{m}$ or $2 < p < 4$ and $0 < \sqrt{2}\omega < \bar{m}\sqrt{p-2}$.
They also proved in \cite{DM} that there exist no nontrivial solutions if $p \leq 2$ or $p \geq 6$ and $0 < \omega \leq \bar{m}$.
In \cite{APP}, Azzollini, Pisani and Pomponio widened the existence range of $\bar{m}, \omega$ for the case $2 < p < 4$ by showing that \eqref{2} admits a nontrivial solution when $0 < \omega < \bar{m}g(p)$, where
\[
g(p) \coloneqq \left\{\begin{array}{rl}
\sqrt{(p-2)(4-p)} & \text{if } 2 < p < 3, \\
1 & \text{if } 3 \leq p < 4.
\end{array}\right.
\]  
Azzollini and Pomponio also focused on the existence of a ground state solution of \eqref{2}.
A critical point of $I_{\bar{m},e,\omega}$ is said to be a ground state solution to \eqref{2} if it minimizes the value of $I_{\bar{m},e,\omega}$ among all nontrivial critical points of $I_{\bar{m},e,\omega}$.
In \cite{AP1}, they showed \eqref{2} admits a ground state solution if $4\le p<6$ and $ 0< \omega < \bar{m}$ or $2 < p < 4$ and $\bar{m}\sqrt{p-1} > \omega\sqrt{5-p}$.
Wang \cite{Wang} established the same result to the range of parameters that $2 < p < 4$ and $0  < \sqrt{h(p)}\omega < \bar{m}$, where 
\[
h(p) \coloneqq 1+\frac{(4-p)^2}{4(p-2)}.
\]  

We now turn to the standing wave solutions for NSP.  We again insert the same ansatz $\psi(t,x) = e^{-i\mu t}u(x),\, u \in \R$ into NSP to obtain
\begin{equation}\label{131}
\begin{aligned}
-\Delta u+&2m\mu u-2qmu \phi-|u|^{p-2}u=0 \mbox{ in }\R^3,\\
&-\Delta \phi=-qm u^2 \mbox{ in } \R^3.
\end{aligned}
\end{equation}
For any $u\in H^1(\R^3)$, there exists a unique $\phi_u\in D^{1,2}(\R^3)$  satisfying
\begin{equation}\label{sta2}
-\Delta \phi_u=-qm u^2 \mbox{ in } \R^3,
\end{equation}  
by Lax-Milgram theorem 
(note that actually $\phi_u=-\frac{qm}{4\pi|x|}\ast u^2$). 
We define the corresponding action integral as
\begin{equation}\label{inftz}
\begin{aligned}
I_\infty(u)&=\frac12\int_{\R^3}|\nabla u|^2+2m\mu u^2-qmu^2\phi_u dx-\frac{1}{p}\int_{\R^3}|u|^pdx.
\end{aligned}
\end{equation}
Then, by \cite[Lemma 3.2]{DM1}, $(u,\phi)\in H^1(\R^3)\times D^{1,2}(\R^3)$ is a solution of \eqref{131} if and only if $u\in H^1(\R^3)$ is a critical point of $I_\infty$, and $\phi=\phi_u$. It is also standard to show that $I_\infty\in C^1(H^1(\R^3),\R)$ and a critical point $u$ of $I_\infty$ satisfies
\begin{equation}\label{a1}
\begin{aligned}
-\Delta u+&2m\mu u-2qmu \phi_u-|u|^{p-2}u=0 \mbox{ in } \R^3.
\end{aligned}
\end{equation}

We summarize some existence results for problem \eqref{a1}.
 D'Aprile-Mugnai \cite{DM1} and Coclite \cite{C} proved the existence of a radial positive solution of \eqref{a1} for $4\le p<6$. On the other hand, using a Pohozaev equality, D'Aprile-Mugnai \cite{DM} showed that there exists no non-trivial solutions of \eqref{a1} for $p\le 2$ or $p\ge 6$. 
By a new approach, Ruiz \cite{R} fills a gap for the range $2<p<4$. More precisely, he proved the following results:
\begin{enumerate}[(i)]
\item ($3<p<6$ and $q>0$) $\exists$ a nontrivial solution, which is a ground state in radial class;
\item ($2<p<3$ and small $q>0$) $\exists$ a nontrivial solution, which is a minimizer of $I_\infty$; 
\item ($2<p\le 3$ and small $q>0$) $\exists$ a nontrivial solution emanating from a ground state solution of
\begin{equation}\label{gr1}
-\Delta u+2m\mu u-|u|^{p-2}u=0 \mbox{ in }\R^3;
\end{equation}
\item ($2 < p \leq 3$ and large $q > 0$) $\not\exists$ nontrivial solution of \eqref{a1}. 
\end{enumerate}
In \cite{AP}, Azzollini and Pomponio constructed a ground state solution  of  \eqref{a1} for $3<p<6$, which is possibly non-radial.
It was shown by Colin and Watanabe \cite{CW} that a ground state is unique and radial up to a translation for small $q > 0$.
This result implies that the solution found by Ruiz coincides with the ground state constructed by Azzollini and Pomponio for small $q > 0$ if $3 < p < 6$.
As far as we know, it is unknown whether the ground states is radial when $q > 0$ is arbitrary.

Concerning the nonrelativistic limit between solitary waves, one can naturally ask is the following: \\ \\
{\bf Question:} For any positive solution $u$ of \eqref{a1}, is there a corresponding family of positive solutions $u_c$ of \eqref{sta1}, which converges to $u$ as $c\to\infty$?   
\\ \\
  In this paper, we not only give a complete answer to this question,  but also construct blow up solutions to NMKG for $2<p<3$.
Our first theorem states the convergence of nonrelativistic limit of ground states between \eqref{sta1} and \eqref{a1} for $3 < p < 6$.
The theorem contains the existence of a ground state to \eqref{sta1} for $3 <  p < 4$ with arbitrary parameters $m, q, \mu, c > 0$ and $c > \sqrt{\mu/m}$, which is not covered by the aforementioned results of Azzollini-Pomponio \cite{AP1} or Wang \cite{Wang} (see Proposition \ref{gmmq1}).
\begin{thm}[Existence and nonrelativistic limit of ground states]\label{mthm1}
Fix arbitrary $\mu, m, q>0$  and  $3 < p < 6$.  
Then there holds the following:
\begin{enumerate}[$(i)$]
\item There exists a ground state solution of \eqref{sta1} for any $c > \sqrt{\mu/m}$.   
\item Any ground state solution of \eqref{sta1} belongs to $H^2(\R^3)$ and converges to a ground state solution of \eqref{a1} in $H^2(\R^3) $ as $c\rightarrow \infty$, up to a translation and a subsequence. 
\end{enumerate}
\end{thm}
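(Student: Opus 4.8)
The plan is to establish $(i)$ by a constrained variational argument for each fixed $c$, and $(ii)$ by a concentration--compactness analysis of an arbitrary family of ground states as $c\to\infty$, in which the gauge terms are shown to degenerate and the energy levels are matched with those of (NSP).

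Throughout, fix $c_0>\sqrt{\mu/m}$ and let $c\ge c_0$, so that the coefficient $m^2c^2-\big(\tfrac{mc^2-\mu}{c}\big)^2=2m\mu-\tfrac{\mu^2}{c^2}$ of the quadratic part of $I_c$ is bounded below by a positive constant uniformly in $c$; note also that, by the maximum principle, $\Phi_u\le 0$ and $|\Phi_u|\le q\big(m-\tfrac{\mu}{c^2}\big)(4\pi|x|)^{-1}\ast u^2$ (dropping the nonnegative term $\tfrac{q^2}{c^2}u^2\Phi_u$ by its sign), whence $\|\Phi_u\|_{D^{1,2}(\R^3)}+\|\Phi_u\|_{L^\infty(\R^3)}\lesssim\|u\|_{H^1(\R^3)}^2$ uniformly in $c$. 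For $(i)$: following the approach of Ruiz \cite{R} for (NSP), I would introduce the Nehari--Pohozaev constraint
\[
\mathcal M_c=\{u\in H^1(\R^3)\setminus\{0\}:G_c(u)=0\},\qquad m_c\coloneqq\inf_{\mathcal M_c}I_c,
\]
where $G_c$ is the combination of the Nehari identity $I_c'(u)[u]=0$ and the Pohozaev identity of \eqref{sta1} for which $I_c$ restricted to $\mathcal M_c$ is bounded below by a positive multiple of $\|\nabla u\|_{L^2}^2$ --- possible precisely for $p>3$, the value $p=3$ being the degenerate threshold and the source of the hypothesis. Since for $c\ge c_0$ the $c$-dependent gauge terms are of lower order and do not spoil the relevant sign conditions, one checks, as in the (NSP) case, that $\mathcal M_c$ is a nonempty $C^1$ natural constraint on which $\|u\|_{H^1}$ and $I_c$ are bounded below by positive constants, and that minimizing sequences are bounded in $H^1(\R^3)$. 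A concentration--compactness argument then produces a minimizer: vanishing would force $\|u\|_{L^p}\to 0$ against the lower bound encoded in $G_c(u)=0$, and dichotomy is ruled out by strict subadditivity of $m_c$. Replacing the minimizer by $|u|$ (legitimate because $\Phi_u$ depends only on $u^2$) and invoking elliptic regularity and the strong maximum principle gives a positive solution of \eqref{sta1} in $H^2(\R^3)$. As every nontrivial solution of \eqref{sta1} satisfies both the Nehari and the Pohozaev identity, it lies on $\mathcal M_c$; hence $m_c$ is the ground-state level and the minimizer is a ground state. (For $4\le p<6$ the same scheme, or the classical results cited above, applies.)

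For $(ii)$, let $u_c$ be any ground state of \eqref{sta1}; by $(i)$ it minimizes $I_c$ on $\mathcal M_c$ and $I_c(u_c)=m_c$. Taking an (NSP) ground state $u_\infty$ (which exists for $3<p<6$ by Azzollini--Pomponio \cite{AP}), projecting it onto $\mathcal M_c$ at a scale tending to $1$, and letting $c\to\infty$, gives $\limsup_{c\to\infty}m_c\le m_\infty$, the ground-state level of $I_\infty$. Together with $u_c\in\mathcal M_c$ this yields a uniform-in-$c$ bound for $\|u_c\|_{H^1}$, hence uniform $D^{1,2}\cap L^\infty$-bounds for $\Phi_{u_c}$ as above, and then --- by Moser iteration for the $L^\infty$-bound on $u_c$ and by reading $-\Delta u_c$ off \eqref{sta1}, whose right-hand side then lies in $L^2(\R^3)$ with uniformly bounded norm --- a uniform $H^2$-bound; in particular $u_c\in H^2(\R^3)$. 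Along a subsequence and after translation, $u_c(\cdot+y_c)\rightharpoonup u$ in $H^1(\R^3)$; vanishing and dichotomy are excluded as in $(i)$ (for dichotomy now also using $\limsup m_c\le m_\infty$), so $u_c(\cdot+y_c)\to u$ strongly in $L^q(\R^3)$ for $2\le q<6$, with $u\ne 0$. One checks $\Phi_{u_c}(\cdot+y_c)\to\phi_u$ in $D^{1,2}(\R^3)$ --- in its equation the term $\tfrac{q^2}{c^2}u_c^2\Phi_{u_c}$ and the defect $q(m-\tfrac{\mu}{c^2})-qm$ both vanish --- and passes to the limit in \eqref{sta1}: the error terms $\tfrac{\mu^2}{c^2}u_c$, $\tfrac{q^2}{c^2}\Phi_{u_c}^2u_c$ and $\big(2q(m-\tfrac{\mu}{c^2})-2qm\big)\Phi_{u_c}u_c$ tend to $0$ in $L^2(\R^3)$, so $u$ solves \eqref{a1}. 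Thus $u$ is a nontrivial critical point of $I_\infty$, giving $I_\infty(u)\ge m_\infty$; weak lower semicontinuity of the Dirichlet energy and the strong convergences just obtained give $m_\infty\le I_\infty(u)\le\liminf_{c\to\infty}I_c(u_c)=\liminf m_c\le\limsup m_c\le m_\infty$, so equality holds throughout. Hence $m_c\to m_\infty$, $u$ is an (NSP) ground state, $\|\nabla u_c\|_{L^2}\to\|\nabla u\|_{L^2}$, and --- together with the $L^2$-convergence --- the weak $H^1$-convergence improves to strong. Finally, writing the equations as $-\Delta u_c=f_c$ and $-\Delta u=f$ and using the established convergences with the uniform $L^\infty$- and $D^{1,2}$-bounds yields $f_c\to f$ in $L^2(\R^3)$, so $u_c\to u$ in $H^2(\R^3)$.

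The heart of the argument, and the main obstacle, is the compactness. In $(i)$ the difficulty is the by-now standard one for nonlocal functionals of Schr\"odinger--Poisson type --- excluding dichotomy by a strict subadditivity estimate --- but in $(ii)$ it must be carried out for the $c$-dependent family $I_c$, which \emph{degenerates} as $c\to\infty$: the coefficients drift to their limits and the gauge term $\tfrac{q^2}{c^2}\Phi_u^2u$ vanishes only in a weighted sense. Making the bounds on $\Phi_{u_c}$, on the error terms, and on the levels $m_c\to m_\infty$ uniform in $c$, and doing this for an \emph{arbitrary} (a priori non-radial) ground state --- so that one must work with translations and the full concentration--compactness alternative rather than with compact radial embeddings --- is where the genuine work lies.
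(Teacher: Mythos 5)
The decisive step in your plan—for both parts—is the assertion that the Nehari--Pohozaev constraint $\mathcal M_c=\{G_c=0\}$ works for $I_c$ ``as in the (NSP) case'' because the gauge terms ``are of lower order and do not spoil the relevant sign conditions.'' This is a genuine gap, and it is precisely the obstruction the paper identifies and circumvents. For fixed $c$ the extra term $\frac{q^2}{c^2}u^2\Phi_u^2$ is not of lower order, and, more importantly, $\Phi_u$ solves $-\Delta\Phi+\frac{q^2}{c^2}u^2\Phi=-q(m-\frac{\mu}{c^2})u^2$, whose $u$-dependent coefficient destroys the exact scaling $\phi_{t^2u(t\cdot)}=t^2\phi_u(t\cdot)$ on which Ruiz's argument for \eqref{a1} rests (nonemptiness of the manifold, unique crossing of the ray $t\mapsto t^2u(tx)$, the natural-constraint property, and the identification of $\inf_{\mathcal M_c}I_c$ with the ground-state level). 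In the present setting the relevant parameters are $\bar m=mc$, $\omega=(mc^2-\mu)/c$, so $\omega/\bar m\to1$ as $c\to\infty$; for $3<p<4$ this is exactly the regime not reached by the constrained-minimization results of Azzollini--Pomponio \cite{AP1} and Wang \cite{Wang}, and the paper states explicitly that no suitable constraint seems to exist for all admissible parameters. The paper instead proves $(i)$ by a different mechanism (Proposition \ref{gmmq1}): it minimizes $I$ over the set of \emph{all nontrivial solutions} of \eqref{geq1}, using the Nehari and Pohozaev relations only as identities satisfied by solutions to get the coercive expression $\frac{5p-12}{2}I(v)=\int(p-3)|\nabla v|^2+\cdots$, and feeds in the known existence of some nontrivial solution from \cite{APP}; no manifold or natural-constraint verification is needed.

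The gap propagates into your part $(ii)$: your upper bound $\limsup_c m_c\le m_\infty$ comes from projecting an (NSP) ground state onto $\mathcal M_c$, which only controls $\inf_{\mathcal M_c}I_c$ and says nothing about the ground-state level $E_c$ unless the natural-constraint identification is established—which is the missing piece. The paper obtains the upper estimate $\limsup_{c\to\infty}E_c\le E_\infty$ by a different and substantially longer route (Section \ref{cons1}): a Byeon--Jeanjean-type deformation/minimax argument \cite{BJ} in a neighborhood $N_d(\mathcal X)$ of the set of translated (NSP) ground states, producing an actual critical point $w_c$ of $\tilde I_c$ with $I_c(w_c)\le \hat e_c=E_\infty+o(1)$, which combined with Proposition \ref{gmmq1} bounds $E_c$. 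Your remaining steps in $(ii)$—uniform $H^1$ bounds from the Nehari/Pohozaev combination, passage to the limit in the equation with the errors $\frac{\mu^2}{c^2}u_c$, $\frac{q^2}{c^2}\Phi_{u_c}^2u_c$, $\Phi_{u_c}-\phi_{u_c}$, energy matching to show the weak limit is a ground state, and the upgrade from strong $H^1$ to $H^2$ convergence via the equation and Calder\'on--Zygmund estimates—do mirror the paper's Propositions \ref{b1q}--\ref{lema2} and the proof of Theorem \ref{mthm1}, and would be fine once a valid upper energy bound and the existence statement are in place; for $4\le p<6$ your scheme is essentially classical, but for $3<p<4$ the constrained-minimization backbone you propose is exactly what is not available.
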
 

Based on the strategies proposed in \cite{CHS, CS},   we shall prove the convergence of nonrelativistic limit in Theorem \ref{mthm1} by establishing the following steps:
\begin{enumerate}[1.]
\item Uniform upper estimate of ground energy levels for \eqref{sta1} by the ground energy level for \eqref{a1}, i.e., 
\begin{equation}\label{upper-est}
\limsup_{c\to\infty}E_c \leq E_\infty, 
\end{equation}
where
\[
E_c = \inf\{ I_c(u) ~|~ u \neq 0,\, I_c'(u) = 0 \} \quad \text{and} \quad E_\infty = \inf\{ I_\infty(u) ~|~ u \neq 0,\, I_\infty'(u) = 0 \};
\]
\item Uniform $H^1$ bounds for ground states \{$u_c\}$ of \eqref{sta1} and solvability of its weak limit $u_\infty$ to \eqref{a1};
\item Energy estimates for establishing $u_\infty$ to be a ground state;
\item $H^1$ convergence of $u_c$ to $u_\infty$ and its upgrade to $H^2$.
\end{enumerate} 

A new difficulty arises when we prove the step $1$ in the case $3 < p < 4$. 
It is worth to point out  that we  couldn't  construct a ground state of \eqref{sta1} by using a constrained minimization method for $3 < p < 4$.
It seems not possible to find a suitable constraint working for every admissible parameters $\mu, m, q, c$.
As a consequence, we couldn't compare ground states energy levels between \eqref{sta1} and \eqref{a1}.
To bypass the obstacle, we directly construct a ground state that satisfies the upper estimate \eqref{upper-est}.
That is, we  first  show the existence of a family of nontrivial solutions to \eqref{sta1} satisfying the upper estimate \eqref{upper-est} by applying a deformation argument developed in \cite{BJ}.  Then, by the compactness of a sequence of solutions to \eqref{sta1}, we prove that  aforementioned nontrivial solutions  to \eqref{sta1} is   ground state solutions to \eqref{sta1} (see Proposition \ref{gmmq1}).




The next theorem covers the case that $2 < p < 3$ and $q$ is small.  
We recall the aforementioned results by  Ruiz \cite{R}, which say the existence of two positive radial solutions $u_\infty$ and $v_\infty$ of \eqref{a1}; $u_\infty$ is a perturbation of the ground state to \eqref{gr1} and $v_\infty$ is a global minimizer of $I_\infty$. 
In Theorem \ref{mthm3}, we show the existence of two radial positive solutions $u_{c}$ and $v_{c}$ to \eqref{sta1} such that $u_c$ and $v_c$ converges to $u_\infty$ and $v_\infty$, respectively.

\begin{thm}[Correspondence of two positive solutions for $2 < p < 3$] \label{mthm3}
Assume $2 < p < 3$. Fix arbitrary but sufficiently small $q > 0$ that guarantees the existence of two positive radial solutions $u_\infty$ and $v_\infty$ to \eqref{a1} mentioned above. 
If $c > 0$ is sufficiently large, then there exist two distinct radially symmetric positive solutions $u_c$  and  $v_c$ of \eqref{sta1} such that
\[
\textup{(i) } \lim_{c\to\infty} \|u_c-u_\infty\|_{H^1(\R^3)} = 0, \qquad \textup{ (ii) } \lim_{c\to\infty}\|v_c-v_\infty\|_{H^1(\R^3)}= 0. 
\]
\end{thm}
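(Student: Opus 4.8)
The plan is to work throughout in the radial Sobolev space $H^1_{\mathrm{rad}}(\R^3)$, where by the principle of symmetric criticality critical points of $I_c$ and $I_\infty$ are genuine radial solutions of \eqref{sta1} and \eqref{a1}, and where the compact embeddings $H^1_{\mathrm{rad}}(\R^3)\hookrightarrow L^s(\R^3)$, $2<s<6$, are available. Two preliminary facts drive everything. First, a quantitative comparison of the functionals: with $\bar m=mc$, $\omega=(mc^2-\mu)/c$, $e=q/c$ one has $\bar m^2-\omega^2=2m\mu-\mu^2/c^2\to 2m\mu$ and $e\omega=q(m-\mu/c^2)\to qm$, while a maximum principle gives $-(mc^2-\mu)/q\le\Phi_u\le 0$; consequently $\Phi_u\to\phi_u$ in $D^{1,2}(\R^3)$ uniformly for $u$ in bounded subsets of $H^1(\R^3)$, and hence $I_c\to I_\infty$ together with its first --- and, on bounded neighbourhoods, its second --- derivatives, uniformly on bounded subsets of $H^1_{\mathrm{rad}}(\R^3)$ as $c\to\infty$. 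Second, a uniform compactness statement: a Nehari/Pohozaev-type computation shows that any solution $w_c$ of \eqref{sta1} with $I_c(w_c)\le C$ satisfies $\|w_c\|_{H^1}\le C'$ with $C'$ independent of large $c$; and any such family, after passing to a subsequence, converges \emph{strongly} in $H^1_{\mathrm{rad}}(\R^3)$ to a solution of \eqref{a1} --- weak limits are upgraded to strong ones by testing the two equations against $w_c-w_\infty$ and using the radial compactness for the local nonlinearity together with $\Phi_{w_c}\to\phi_{w_\infty}$ for the nonlocal one.

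For the minimizing branch I would construct $v_c$ as a global minimizer of $I_c$ on $H^1_{\mathrm{rad}}(\R^3)$. Since $2<p<3$, the functional $I_c$ is coercive and bounded below for $c$ large --- here one cannot discard the nonlocal term $\tfrac{e\omega}{2}\int_{\R^3} u^2|\Phi_u|\,dx$, which, unlike the quadratic part, grows super-quadratically under dilations and supplies the coercivity missing from $-\tfrac1p\|u\|_p^p$; this is exactly Ruiz's mechanism for $I_\infty$ in \cite{R}, made uniform in large $c$ by the comparison above. By radial compactness the infimum $m_c:=\inf_{H^1_{\mathrm{rad}}}I_c$ is then attained at some $v_c$, and $m_c\le I_c(v_\infty)\to I_\infty(v_\infty)=\inf I_\infty<0$ shows $v_c\neq 0$. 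Replacing $v_c$ by $|v_c|$, which leaves $I_c$ unchanged because $\Phi_u$ depends only on $u^2$, we may take $v_c\ge 0$; the strong maximum principle applied to $-\Delta v_c+\big(\bar m^2-(\omega+e\Phi_{v_c})^2\big)v_c=v_c^{p-1}$, whose potential is positive and bounded (the bound using elliptic regularity for $\Phi_{v_c}$), then forces $v_c>0$. Finally $m_c\to\inf I_\infty$ --- the upper bound just shown, the lower bound from weak lower semicontinuity together with the preliminary comparison --- so that any subsequential weak limit of $\{v_c\}$ is a global minimizer of $I_\infty$, and, for $q$ small (where the positive radial global minimizer of $I_\infty$ is unique), the full family $v_c$ converges to it in $H^1_{\mathrm{rad}}(\R^3)$; this limit is $v_\infty$.

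For the branch $u_c$ I would use a perturbation argument in place of a minimax one. By Ruiz's construction \cite{R}, for $q$ small $u_\infty$ is $H^1$-close to the positive radial ground state $U$ of \eqref{gr1}, which is non-degenerate in the radial class --- the kernel of $-\Delta+2m\mu-(p-1)U^{p-2}$ on $H^1_{\mathrm{rad}}(\R^3)$ is trivial, the translation kernel being non-radial. The Hessian $I_\infty''(u_\infty)$ differs from $-\Delta+2m\mu-(p-1)U^{p-2}$ only by an $O(q)$ local term, a compact $O(q^2)$ nonlocal term, and $(p-1)(U^{p-2}-u_\infty^{p-2})$, all small for $q$ small; hence $u_\infty$ is itself a non-degenerate critical point of $I_\infty$ in $H^1_{\mathrm{rad}}(\R^3)$. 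Combining this with the $C^2$-convergence $I_c\to I_\infty$ on a neighbourhood of $u_\infty$, the implicit function theorem applied to the jointly $C^1$ map $(u,1/c)\mapsto I_c'(u)\in H^{-1}_{\mathrm{rad}}(\R^3)$ (equivalently, a one-step Lyapunov--Schmidt reduction in the spirit of \cite{BJ}) produces, for $c$ large, a critical point $u_c$ of $I_c$ near $u_\infty$ with $u_c\to u_\infty$ in $H^1_{\mathrm{rad}}(\R^3)$; local uniqueness identifies $u_c$ as the solution of \eqref{sta1} lying near $u_\infty$, hence as the counterpart of $u_\infty$. Positivity follows as before: testing \eqref{sta1} against the negative part $u_c^-$ yields $\int_{\R^3}|\nabla u_c^-|^2+\big(\bar m^2-(\omega+e\Phi_{u_c})^2\big)(u_c^-)^2\,dx=\int_{\R^3}(u_c^-)^p\,dx$, so either $u_c^-=0$ or $\|u_c^-\|_{H^1}\ge\delta>0$ uniformly in $c$; since $u_c\to u_\infty>0$ this forces $u_c^-=0$, and then $u_c>0$ by the strong maximum principle.

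Finally, $u_c$ and $v_c$ are distinct for $c$ large: $I_c(u_c)\to I_\infty(u_\infty)>0$ --- positive for $q$ small since $u_\infty$ is a small perturbation of the mountain-pass solution $U$ of \eqref{gr1} --- whereas $I_c(v_c)=m_c\to\inf I_\infty<0$. The step I expect to be the genuine obstacle is not any single estimate but the uniformity in $c$: the uniform a priori $H^1$ bound and uniform compactness of the first preliminary step, and, for the branch $u_c$, the uniform invertibility of the linearized operators that makes the implicit function theorem work along $c\to\infty$ --- all in the low-power regime $2<p<3$, where the Nemytskii map $u\mapsto|u|^{p-2}u$ is merely H\"older near the origin and such estimates must be handled with care. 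A secondary point is identifying the limits with Ruiz's specific solutions: for $u_\infty$ this is the local uniqueness near $U$ used above, and for $v_\infty$ it relies on the uniqueness of the positive radial global minimizer of $I_\infty$ for small $q$.
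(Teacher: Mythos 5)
Your construction of the branch $u_c$ (non-degeneracy of the limiting NLS ground state, non-degeneracy of $u_\infty$ for small $q$, then the implicit function theorem in $1/c$) is exactly the paper's argument and is fine. The genuine gap is in the branch $v_c$: you build it as a global minimizer of $I_c$ on $H^1_r$, asserting that $I_c$ is coercive and bounded below for large $c$ because the nonlocal term ``grows super-quadratically under dilations'' as in Ruiz's argument for $I_\infty$. This is precisely the step that fails (and that the paper explicitly could not carry out: it states that it seems not easy to show whether the global minimum of $I_c$ is finite, and leaves the existence of a global minimizer of $I_c$ as a conjecture). The reason is the screening in the Maxwell--Klein--Gordon potential: the maximum principle gives $-\omega/e\le \Phi_u\le 0$, i.e.\ $e|\Phi_u|\le\omega$ pointwise, so the positive nonlocal contribution to $I_c$ satisfies
\[
-\,e\omega\int_{\R^3}u^2\Phi_u\,dx\;\le\;\omega^2\int_{\R^3}u^2\,dx,
\]
which is at most \emph{quadratic} in $u$, uniformly in $u$; it cannot reproduce the super-quadratic growth of the unscreened Coulomb energy $\frac{qm}{4\pi}\iint\frac{u^2(x)u^2(y)}{|x-y|}\,dx\,dy$ on which Ruiz's coercivity for $2<p<3$ rests. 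Nor can you transfer coercivity from $I_\infty$ via the comparison $\Phi_u\approx\phi_u$: the error estimate (Lemma \ref{yyy1}) is of size $c^{-2}(\|u\|_{H^1}^2+1)\|u\|_{H^1}^2$, hence uniform only on $H^1$-bounded sets, whereas coercivity is a statement about $\|u\|_{H^1}\to\infty$ at fixed $c$. So the boundedness below and attainment of $\inf_{H^1_r}I_c$ are unsubstantiated, and with them the whole minimization route for $v_c$. The paper circumvents exactly this by a localized deformation argument: it works only in a neighborhood $N_d(\mathcal{X}_r)$ of the (bounded) set $\mathcal{X}_r$ of minimizers of $\tilde I_\infty$, where the functionals and their gradients are uniformly comparable (Propositions \ref{llla1}--\ref{apro1}), and runs a gradient flow confined to that neighborhood using the lower bound $e_\infty$ of $\tilde I_\infty$ rather than any lower bound for $I_c$.

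Two secondary points. First, to identify the limit of your $v_c$ with $v_\infty$ you invoke uniqueness of the positive radial global minimizer of $I_\infty$ for small $q$; this is not established (and not used in the paper, which only proves convergence, up to subsequences, to some element of $\mathcal{X}_r$), so you should either prove it or weaken the identification accordingly. Second, your distinctness argument via $I_c(u_c)\to I_\infty(u_\infty)>0$ versus $I_c(v_c)\to\inf I_\infty<0$ is fine in spirit, but the positivity of $I_\infty(u_\infty)$ should be justified (e.g.\ by the convergence $u_\infty\to w_0$ as $q\to0$ and $I$ evaluated at the NLS ground state); alternatively, distinctness already follows from (i), (ii) and $u_\infty\neq v_\infty$.
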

 In \cite{R}, Ruiz proved that a global minimizer $v_\infty$ of $I_\infty$ blows up in $H^1$ as $q\rightarrow 0$, which implies that the solution $v_c$ constructed in Theorem \ref{mthm3} blows up in $H^1$ as $q\rightarrow 0$ and $c\rightarrow \infty$.
We point out that Theorem \ref{mthm3} not only proves the correspondence between solitary waves but also establishes a new existence result to \eqref{sta1} for $2 < p < 3$.
As we have seen above, the previous approaches \cite{AP1, APP, DM1, Wang} doesn't cover the case that $\omega > 0$  is less than but sufficiently close to $\bar{m}$. 
In this respect, one family of solutions $u_c$ is actually not brand new because it is a simple consequence of implicit function theorem, which relies on nondegeneracy of the solution $u_\infty$.   
 However, the other family of solutions $v_c$ is brand new because $v_c$ bifurcates from a global minimizer of $I_\infty$, which blows up in $H^1$. As for the construction of $v_c$, it seems not easy to show whether the global minimum of $I_c$ is finite, unlike $I_\infty$.  
This prevents us from simply adopting the minimization argument.
To overcome this difficulty, we develop a new deformation argument, which strongly depends on the fact that the global minimum level of $I_\infty$ is bounded below. 
We conjecture that if $c$ is sufficiently large, there exists a global minimizer of $I_c$, which converges to $v_\infty$.

We organize the paper as follows: In section \ref{prem}, we give variational settings for NSP and NMKG, and a simple proof  for the existence of a ground state to \eqref{2} for $3<p<6$. Section \ref{cons1} is devoted to construct  nontrivial solutions to \eqref{sta1} with the energy bound $E_\infty$ when $3 < p < 6$. 
In Section \ref{limm1}, we prove Theorem \ref{mthm1} by combining the results in Section \ref{cons1}.
In Section \ref{case23}, we deal with the case $2<p<3$. We construct two radial positive solutions of \eqref{sta1} and prove the convergence of their nonrelativistic limit.  
Finally, in Appendix, we give basic estimates, which are used in the proofs of main theorems.
\\ \\
{\bf Acknowledgement.}
This research of the second author was supported by Basic Science Research Program through the National Research Foundation of Korea(NRF) 
funded by the Ministry of Science and ICT (NRF-2020R1C1C1A01006415)

\section{Preliminaries}\label{prem}
This preliminary section introduces basic functional and variational settings for NMKG and NSP. 
In addition, we provide a simple proof for the existence of a ground state to \eqref{2} for every $3< p < 6$ and every $e,\bar{m}, \omega > 0$ such that $\bar{m} > \omega$.
\subsection{Function spaces}
The space $D^{1,2}(\R^3)$ is defined by the completion of $C_0^\infty(\R^3)$ with respect to the norm 
\[
\|u\|_{D^{1,2}(\R^3)}=\Big(\int_\Omega |\nabla u|^2 dx\Big)^{1/2}. 
\]
For an open set $\Omega\subset \R^3$ and $r\in [1,\infty)$, let us denote the norms
\[
\|u\|_{L^r(\Omega)}=\Big(\int_\Omega |u|^r dx\Big)^{1/r}, 
\quad \|u\|_{L^\infty(\Omega)}=\esssup_{x\in \Omega} |u(x)|,
\quad \|u\|_{H^1(\Omega)}=\Big(\int_\Omega |\nabla u|^2+u^2 dx\Big)^{1/2}. 
\]
We also use the following abbreviations, 
\[
\|u\|_{L^r}=\|u\|_{L^r(\R^3)}, \quad \|u\|_{D^{1,2}}=\|u\|_{D^{1,2}(\R^3)} \quad  \text{and} \quad \|u\|_{H^1}=\|u\|_{H^1(\R^3)}. 
\]
We denote by $H^1_r$ the Sobolev space of radial functions $u$ such that $u$, $\nabla u$ are in $L^2(\R^3)$.

\subsection{Variaional settings for NSP}
Recall the action functional for \eqref{a1},
\begin{equation*} 
\begin{aligned}
I_\infty(u)&=\frac12\int_{\R^3}|\nabla u|^2+2m\mu u^2+|\nabla \phi_u|^2 dx-\frac{1}{p}\int_{\R^3}|u|^pdx\\
&=\frac12\int_{\R^3}|\nabla u|^2+2m\mu u^2-qmu^2\phi_u dx-\frac{1}{p}\int_{\R^3}|u|^pdx.
\end{aligned}
\end{equation*}
The map $\lambda:u\in H^1\rightarrow \phi_u\in D^{1,2}$ is continuously differentiable, where $\phi_u$ satisfies \eqref{sta2} (see \cite{DM1}).
Since $\lambda^\prime(u) [v]$ satisfies
\[
-\Delta (\lambda^\prime(u) [v])=-2qm uv \mbox{ in } \R^3 \qquad \text{for } v \in H^1,
\]
we have
\[
\int_{\R^3}\nabla (\lambda^\prime(u) [v])\cdot \nabla \phi_u dx=-2qm\int_{\R^3}uv\phi_u dx. 
\]
Then we see that
\begin{align*}
I_\infty^\prime(u)v&= \int_{\R^3}\nabla u\cdot \nabla v+2m\mu uv+\nabla (\lambda^\prime(u) [v])\cdot \nabla \phi_u dx- \int_{\R^3}|u|^{p-2}uvdx\\
&=\int_{\R^3}\nabla u\cdot \nabla v+2m\mu uv-2qmuv \phi_u dx- \int_{\R^3}|u|^{p-2}uvdx,
\end{align*}
which shows that a critical point of $I_\infty$ is a weak solution to \eqref{a1}.
We define the Nehari and Pohozaev functionals for \eqref{a1} by 
\[
\begin{aligned}
&J_\infty(u)\equiv I_\infty^\prime(u)u=\int_{\R^3}|\nabla u|^2+2m\mu u^2-2qm u^2\phi_u-|u|^{p} dx, \\
&P_\infty(u) \equiv \int_{\R^3}\frac12|\nabla u|^2+3m\mu u^2-\frac52 qm u^2\phi_u-\frac{3}{p}|u|^{p} dx.
\end{aligned}
\]
We note that the values of $J_\infty$ and $P_\infty$ should be zero at every critical point of $I_\infty$  (see \cite{R}).
By defining $G_\infty(u)\equiv 2 J_\infty(u)-P_\infty(u)$, we denote
\begin{align*}
M_\infty \equiv \Big\{u\in H^1\setminus \{0\}\ &\Big|\ G_\infty(u)\equiv \int_{\R^3}\frac32|\nabla u|^2+m\mu u^2-\frac32 qm u^2\phi_u-\frac{2p-3}{p}|u|^{p} dx=0\Big\}
\end{align*}
and
\begin{equation}\label{a3}
E_\infty\equiv \inf_{u\in M_\infty}I_\infty(u).
\end{equation}
It is proved in \cite{R} that for $3 < p < 6$, $E_\infty$ equals to the ground energy level for \eqref{a1}, i.e.
\[
E_\infty = \inf\{ I_\infty(u) ~|~ u \neq 0,\, I_\infty'(u) = 0 \}.
\]

\subsection{Variational settings for NMKG}
The action functional for \eqref{sta1} is given by
\begin{align*}
I_c(u)&=\frac12\int_{\R^3}|\nabla u|^2+\Big(2m\mu-\frac{\mu^2}{c^2}\Big)u^2+|\nabla \Phi_u|^2+\Big(\frac{q}{c}\Big)^2u^2\Phi_u^2dx-\frac{1}{p}\int_{\R^3}|u|^pdx\\
&=\frac12\int_{\R^3}|\nabla u|^2+\Big(2m\mu-\frac{\mu^2}{c^2}\Big)u^2-q\Big(m-\frac{\mu}{c^2}\Big) u^2 \Phi_u dx-\frac{1}{p}\int_{\R^3}|u|^pdx.
\end{align*}The map $\Lambda:u\in H^1\rightarrow \Phi_u\in D^{1,2}$  is continuously differentiable, where $\Phi_u$ satisfies \eqref{sta3} (see \cite{DM1}). 
For $v\in H^1$, since $\Lambda^\prime(u) [v]$ satisfies
\[
-\Delta (\Lambda^\prime(u) [v])+\Big(\frac{q}{c}\Big)^2u^2 (\Lambda^\prime(u) [v])=-2\Big(\frac{q}{c}\Big)^2uv \Phi_u-2q\Big(m-\frac{\mu}{c^2}\Big)   uv,
\]
we have
\[
\int_{\R^3}\nabla (\Lambda^\prime(u) [v])\cdot \nabla \Phi_u+\Big(\frac{q}{c}\Big)^2u^2 (\Lambda^\prime(u) [v])\Phi_u dx=\int_{\R^3}-2\Big(\frac{q}{c}\Big)^2uv \Phi_u^2-2q\Big(m-\frac{\mu}{c^2}\Big)   uv\Phi_u dx.
\]
Then we see that for $v\in H^1$,
\begin{align*}
I_c^\prime(u)v&=\int_{\R^3}\nabla u\cdot \nabla v+\Big(2m\mu-\frac{\mu^2}{c^2}\Big)uv+\nabla \Phi_u\cdot \nabla (\Lambda^\prime(u) [v])+\Big(\frac{q}{c}\Big)^2 uv \Phi_u^2\\
&\qquad +\Big(\frac{q}{c}\Big)^2u^2\Phi_u (\Lambda^\prime(u) [v])-|u|^{p-2}uvdx\\
&=\int_{\R^3}\nabla u\cdot \nabla v+\Big(2m\mu-\frac{\mu^2}{c^2}\Big)uv-\Big(\frac{q}{c}\Big)^2 uv \Phi_u^2-2q\Big(m-\frac{\mu}{c^2}\Big)   u v\Phi_u-|u|^{p-2}uvdx.
\end{align*}
In particular, we have
\begin{align*}
J_c(u)\equiv I_c^\prime(u)u=\int_{\R^3}|\nabla u|^2+\Big(2m\mu-\frac{\mu^2}{c^2}\Big)u^2-\Big(\frac{q}{c}\Big)^2 u^2 \Phi_u^2-2q\Big(m-\frac{\mu}{c^2}\Big) u^2\Phi_u-|u|^{p} dx.
\end{align*}
For any critical point $w_c$ of $I_c$, it is clear that  $J_c(w_c) = 0$ and it is shown in \cite{DM} that the Pohozaev's identity $P_c(w_c)=0$ holds true,  where
\[
P_c(u)\equiv \int_{\R^3}\frac12 |\nabla u|^2+ \frac32\Big(2m\mu-\frac{\mu^2}{c^2}\Big)u^2-\frac{q^2}{c^2}\Phi_u^2u^2- \frac52q\Big(m-\frac{\mu}{c^2}\Big)u^2\Phi_u -\frac{3}{p}|u|^pdx.
\]


\subsection{Existence of a ground state for $3 < p < 6$}
We recall the equation \eqref{2}, written as
\begin{equation}\label{geq1}
-\Delta u+\big(\bar{m}^2-(e\varphi_u+\omega)^2\big)u=|u|^{p-2}u \mbox{ in }\R^3
\end{equation}
where $e>0$, $0<\omega<\bar{m}$ and $\varphi_u$ is a unique solution of
\[
-\Delta \varphi+e^2\varphi u^2=-e\omega u^2.
\]
Here we point out that by the maximum principle, we have the uniform bound
\[
-\frac{\omega}{e}\le \varphi_u\le 0.
\]

\begin{prop}\label{gmmq1}
Assume that $3<p<6$, $e>0$ and $0<\omega<\bar{m}$. If there exists a non-trivial solution of \eqref{geq1}, then there exists a non-trivial ground state solution of \eqref{geq1}.
\end{prop}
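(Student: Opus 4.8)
The plan is to produce a ground state not by minimizing $I_{\bar m,e,\omega}$ over a constraint manifold — which, as explained in the introduction, seems not to work uniformly in the parameters when $3<p<4$ — but by extracting a convergent subsequence from a minimizing sequence of \emph{genuine} solutions; the hypothesis is used precisely to guarantee that the ground energy level is a finite infimum over a nonempty set. Two algebraic identities drive everything. Denoting by $J_{\bar m,e,\omega}$, $P_{\bar m,e,\omega}$ the Nehari and Pohozaev functionals for $I_{\bar m,e,\omega}$ (the analogues of $J_c,P_c$ under $\bar m=mc$, $e=q/c$, $\omega=(mc^2-\mu)/c$) and setting $G_{\bar m,e,\omega}\coloneqq 2J_{\bar m,e,\omega}-P_{\bar m,e,\omega}$, I would first record, after using $-\Delta\varphi_u+e^2u^2\varphi_u=-e\omega u^2$ to replace $-e\omega\int_{\R^3}u^2\varphi_u\,dx$ by $\|\nabla\varphi_u\|_{L^2}^2+e^2\int_{\R^3}u^2\varphi_u^2\,dx$, that
\[
I_{\bar m,e,\omega}(u)-\tfrac13 G_{\bar m,e,\omega}(u)=\tfrac13(\bar m^2-\omega^2)\|u\|_{L^2}^2+\tfrac13 e^2\!\int_{\R^3}\!u^2\varphi_u^2\,dx+\tfrac{2(p-3)}{3p}\|u\|_{L^p}^p,
\]
whose right-hand side is a sum of nonnegative terms since $\bar m>\omega$ and $p>3$, while $G_{\bar m,e,\omega}(u)$, written in the same variables, equals $\tfrac32\|\nabla u\|_{L^2}^2+\tfrac12(\bar m^2-\omega^2)\|u\|_{L^2}^2+\tfrac32\|\nabla\varphi_u\|_{L^2}^2+\tfrac12 e^2\int_{\R^3}u^2\varphi_u^2\,dx-\tfrac{2p-3}{p}\|u\|_{L^p}^p$. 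A nontrivial solution $w$ of \eqref{geq1} has $J_{\bar m,e,\omega}(w)=0$ and, by the Pohozaev identity \cite{DM}, $P_{\bar m,e,\omega}(w)=0$, hence $G_{\bar m,e,\omega}(w)=0$, so the first identity makes $I_{\bar m,e,\omega}(w)$ a finite positive number; and the Nehari identity alone (once $-e\omega\int u^2\varphi_u$ is substituted as above) gives $\|\nabla w\|_{L^2}^2+(\bar m^2-\omega^2)\|w\|_{L^2}^2\le\|w\|_{L^p}^p\le C\|w\|_{H^1}^p$, so that $\|w\|_{H^1}\ge\delta_0>0$ and $\|w\|_{L^p}\ge\delta_1>0$ for every nontrivial critical point. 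Consequently $c_0\coloneqq\inf\{I_{\bar m,e,\omega}(u):u\in H^1\setminus\{0\},\ I_{\bar m,e,\omega}'(u)=0\}$ is, by hypothesis, finite and strictly positive.

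I would then take nontrivial solutions $w_n$ with $I_{\bar m,e,\omega}(w_n)\to c_0$. The identity for $G_{\bar m,e,\omega}(w_n)=0$ bounds $\|w_n\|_{L^2}$, $\|w_n\|_{L^p}$ and then $\|\nabla w_n\|_{L^2}$, so $\{w_n\}$ is bounded in $H^1$; since $\|w_n\|_{L^p}\ge\delta_1$, Lions' lemma excludes vanishing, and after a translation $v_n\coloneqq w_n(\cdot+y_n)$ — again a solution with the same energy — I may assume $v_n\rightharpoonup v\neq 0$ in $H^1$, with $v_n\to v$ in $L^q_{\mathrm{loc}}$ ($1\le q<6$) and a.e. Using that $u\mapsto\varphi_u$ is continuous into $D^{1,2}$, the uniform bound $\|\varphi_{v_n}\|_{L^\infty}\le\omega/e$ (the maximum-principle bound $-\omega/e\le\varphi_u\le 0$), and testing the equation satisfied by $\varphi_{v_n}-\varphi_v$ against itself (where the right-hand side carries the factor $e^2\varphi_v+e\omega$, controlled in $L^\infty$ again by $-\omega/e\le\varphi_v\le0$), one verifies $\varphi_{v_n}\rightharpoonup\varphi_v$ in $D^{1,2}$ and $\varphi_{v_n}\to\varphi_v$ in $L^2_{\mathrm{loc}}$; passing to the limit in $I_{\bar m,e,\omega}'(v_n)\psi=0$ for $\psi\in C_0^\infty(\R^3)$ then shows $I_{\bar m,e,\omega}'(v)=0$, so $v$ is a nontrivial critical point and $I_{\bar m,e,\omega}(v)\ge c_0$.

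To finish I would prove $I_{\bar m,e,\omega}(v)\le c_0$. Since $v$ solves \eqref{geq1}, $G_{\bar m,e,\omega}(v)=0$, so $I_{\bar m,e,\omega}(v)=\tfrac13(\bar m^2-\omega^2)\|v\|_{L^2}^2+\tfrac13 e^2\int_{\R^3}v^2\varphi_v^2\,dx+\tfrac{2(p-3)}{3p}\|v\|_{L^p}^p$, and the same identity holds for every $v_n$; the claim therefore reduces to the weak lower semicontinuity, along $\{v_n\}$, of this right-hand side. The $\|\cdot\|_{L^2}^2$ term is lower semicontinuous and the $\|\cdot\|_{L^p}^p$ term is handled by Fatou, so the decisive point — and the step I expect to be the main obstacle — is the nonlocal Maxwell term $e^2\int_{\R^3}v_n^2\varphi_{v_n}^2\,dx$. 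The plan there is to establish a Brezis--Lieb-type splitting $\int_{\R^3}v_n^2\varphi_{v_n}^2\,dx=\int_{\R^3}v^2\varphi_v^2\,dx+\int_{\R^3}(v_n-v)^2\varphi_{v_n-v}^2\,dx+o(1)$ — equivalently, a splitting of the quadratic functional $u\mapsto-e\omega\int_{\R^3}u^2\varphi_u\,dx=\|\nabla\varphi_u\|_{L^2}^2+e^2\int_{\R^3}u^2\varphi_u^2\,dx$ — whose middle term is nonnegative; the uniform pointwise bound $-\omega/e\le\varphi_u\le 0$ is exactly what keeps the error terms in such a splitting under control, and it forces $\liminf_n e^2\int_{\R^3}v_n^2\varphi_{v_n}^2\,dx\ge e^2\int_{\R^3}v^2\varphi_v^2\,dx$. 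Granting this, $c_0=\lim_n I_{\bar m,e,\omega}(v_n)\ge I_{\bar m,e,\omega}(v)\ge c_0$, so $I_{\bar m,e,\omega}(v)=c_0$; and because every nontrivial critical point has energy $\ge c_0=I_{\bar m,e,\omega}(v)$ while $v\neq 0$, $v$ is a nontrivial ground state solution of \eqref{geq1}.
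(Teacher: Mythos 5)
Your proposal is correct and follows essentially the same route as the paper's own proof: minimize $I$ over the set of nontrivial solutions, use a Nehari--Pohozaev combination whose terms are all nonnegative to get $H^1$-boundedness and a uniform lower $L^p$ bound, exclude vanishing by Lions' lemma, translate to get a nontrivial weak limit which is a critical point, and conclude by weak lower semicontinuity of that same combination; the only difference is bookkeeping, namely you work with $I-\tfrac13(2J-P)$ (no gradient term, so you recover the bound on $\|\nabla w_n\|_{L^2}$ from $G(w_n)=0$), whereas the paper uses $\tfrac{5p-12}{2}I-T+\tfrac{4-p}{2}Q$, which retains $(p-3)\|\nabla u\|_{L^2}^2$. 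One remark: the step you single out as the main obstacle needs no Brezis--Lieb splitting, since after passing to a subsequence $v_n\to v$ and $\varphi_{v_n}\to\varphi_v$ almost everywhere, so Fatou's lemma applied to the nonnegative integrand $v_n^2\varphi_{v_n}^2$ directly yields $\liminf_n\int_{\R^3}v_n^2\varphi_{v_n}^2\,dx\ge\int_{\R^3}v^2\varphi_v^2\,dx$, which is all the paper uses at the corresponding point.
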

\begin{proof}
Suppose that there exists a non-trivial solution solution of \eqref{geq1}.
We recall the action functional of \eqref{geq1}
\[
 I(u) =\frac12\int_{\R^3}|\nabla u|^2+(\bar{m}^2-\omega^2)u^2-e\omega \varphi_u u^2dx-\frac{1}{p}\int_{\R^3}|u|^pdx.
\]
and consider the minimization problem 
\[
\mathcal{S}=\inf\{ I(u)\ | \ u \in \mathcal{B}\},
\]
where
\[
\mathcal{B}\equiv \{ u \in H^1\ | \ u \mbox{ is a  non-trivial solution solution  of } \eqref{geq1}\}.
\] 
By the definition, a ground state solution $u$ of \eqref{geq1} is a nontrivial critical point of $I$ satisfying $I(u) = \mathcal{S}$.
Let us define
\[
\left\{\begin{aligned}
&T(u) \coloneqq I^\prime(u)u= \int_{\R^3}|\nabla u|^2+(\bar{m}^2-\omega^2)u^2-2e\omega \varphi_u u^2-e^2\varphi_u^2 u^2-|u|^pdx \\
&Q(u) \coloneqq \int_{\R^3}\frac12|\nabla u|^2+\frac32(\bar{m}^2-\omega^2)u^2-\frac52e\omega \varphi_u u^2-e^2\varphi_u^2 u^2-\frac{3}{p}|u|^pdx.
\end{aligned}\right.
\]
Since $T(v)=Q(v)=0$ for any $v \in \mathcal{B}$, (see \cite{DM}), one has
\begin{align*}
\frac{5p-12}{2}I(v)&=\frac{5p-12}{2}I(v)-T(v)+\frac{4-p}{2}Q(v) \\
&=\int_{\R^3}(p-3)|\nabla v|^2+\frac{p-2}{2}(\bar{m}^2-\omega^2)v^2+\frac{p-2}{2}e^2 v^2\varphi_{v}^2dx
\end{align*}
for $v\in \mathcal{B}$. This implies that $\mathcal{S}\ge 0$.

Let $\{u_n\}$ be a minimizing sequence of $\mathcal{S}$. From the estimates
\begin{equation}\label{mqq}
\begin{aligned}
\frac{5p-12}{2}\mathcal{S}+o(1) =\int_{\R^3}(p-3)|\nabla u_n|^2+\frac{p-2}{2}(\bar{m}^2-\omega^2)u_n^2+\frac{p-2}{2}e^2 u_n^2\varphi_{u_n}^2dx
\end{aligned}
\end{equation}
and
\begin{align*}
0=T(u_n)&= \int_{\R^3}|\nabla u_n|^2+(\bar{m}^2-\omega^2)u_n^2- \varphi_{u_n}(2e\omega +e^2\varphi_{u_n} )u_n^2-|u_n|^pdx\\
&\ge\int_{\R^3}|\nabla u_n|^2+(\bar{m}^2-\omega^2)u_n^2-|u_n|^pdx\ge C\|u_n\|_{L^p}^{2/p}-\|u_n\|_{L^p}^p,
\end{align*}
we deduce that $(u_n)$ is bounded in $H^1$ and $\|u_n\|_{L^p}\ge C_1$ for some  positive constant $C_1$. 
Then  we see from Lemma 1.1 in \cite{L},
\begin{equation*}
\sup_{x\in \R^3}\int_{B_1(x)}|u_n|^2dx=\int_{B_1(x_n)}|u_n|^2dx\ge C_2>0,
\end{equation*}
where $x_n\in \R^3$ and $C_2$ is a positive constant. Then we may assume that $u_n(\cdot+x_n)$ converges to $u\not\equiv0$ weakly in $H^1$. It is standard to show that $u$ is a non-trivial critical point of $I$. Moreover, by \eqref{mqq} and the fact that $u$ is a non-trivial critical point of $I$, we see that
\begin{align*}
\frac{5p-12}{2}\mathcal{S}&=\liminf_{n\rightarrow \infty}\int_{\R^3}(p-3)|\nabla u_n|^2+\frac{p-2}{2}(\bar{m}^2-\omega^2)u_n^2+\frac{p-2}{2}e^2 u_n^2\varphi_{u_n}^2dx\\
&\ge \int_{\R^3}(p-3)|\nabla u|^2+\frac{p-2}{2}(\bar{m}^2-\omega^2)u^2+\frac{p-2}{2}e^2 u^2\varphi_{u}^2dx =\frac{5p-12}{2}I(u),
\end{align*}
which implies that $u$ is a non-trivial ground state  solution of \eqref{geq1}.
\end{proof}
Observe that Proposition \ref{gmmq1} implies the existence of a ground state to \eqref{geq1} for any $e, \bar{m}, \omega > 0$ such that $0 < \omega < \bar{m}$ since
there exists a nontrivial solution at those ranges of parameters by \cite{APP}.

\section{ Construction of nontrivial solutions to NKGM with the energy bound $E_\infty$}\label{cons1}
In this section, based on the idea of \cite{BJ}, we shall construct a family of nontrivial solutions $w_c$ to \eqref{sta1} satisfying
\[
\limsup_{c\to\infty}I_c(w_c) \leq E_{\infty}.
\]
Before proceeding further, we first introduce a modified functional $\tilde{I}_c$ as
\begin{align*}
\tilde{I}_c(u)=\frac12\int_{\R^3}|\nabla u|^2+\Big(2m\mu-\frac{\mu^2}{c^2}\Big)u^2-q\Big(m-\frac{\mu}{c^2}\Big) u ^2 \Phi_{u} dx-\frac{1}{p}\int_{\R^3}u_+^pdx,
\end{align*}
where $c>0$ and  $u_+=\max\{u,0\}$.
A critical point of $\tilde{I}_c$ corresponds to a solution of 
\begin{equation}\label{qq1}
\begin{aligned}
-\Delta u+&\Big(2m\mu-\Big(\frac{\mu}{c}\Big)^2\Big)u-\Big(\frac{q}{c}\Big)^2u \Phi^2-2q\Big(m-\frac{\mu}{c^2}\Big)u\Phi -u_+^{p-1}=0 \mbox{ in }\R^3,\\
&-\Delta \Phi+\frac{q^2}{c^2}u^2\Phi=-q\Big(m-\frac{\mu}{c^2}\Big)   u^2 \mbox{ in } \R^3.
\end{aligned}
\end{equation}
It is possible to show from the maximum principle that a critical point $u$ of $\tilde{I}_c$ is positive everywhere in $\R^3$ for $c\ge \sqrt{\frac{2m}{\mu}}$.
Indeed, since $-\frac{c^2}{q}\Big(m-\frac{\mu}{c^2}\Big)\le \Phi_u \le0 $, multiplying $u_-$ to the equation
$$-\Delta u+ \Big(2m\mu-\big(\frac{\mu}{c}\big)^2\Big)u-\Big(\frac{q}{c}\Big)^2u \Phi_u^2-2q\Big(m-\frac{\mu}{c^2}\Big)u\Phi_u -u_+^{p-1}=0 \mbox{ in }\R^3
$$
 and then integrating over $\R^3$, we have
\begin{align*}
&\int_{\R^3}|\nabla u_-|^2+\Big(2m\mu-\frac{\mu^2}{c^2}\Big)u_-^2dx\\
&\le \int_{\R^3}|\nabla u_-|^2+\Big(2m\mu-\frac{\mu^2}{c^2}\Big)u_-^2-u_-^2 \Phi_u\Big[\Big(\frac{q}{c}\Big)^2 \Phi_u+2q\Big(m-\frac{\mu}{c^2}\Big) \Big] dx=0,
\end{align*}
where $u_-=\min\{u,0\}$.
Therefore a nontrivial critical point of $\tilde{I}_c$ gives a positive solution to \eqref{sta1}.
We also define
 \begin{align*}
&\tilde{I}_\infty(u) \coloneqq \frac12\int_{\R^3}|\nabla u|^2+2m\mu u^2-qmu ^2\phi_{u} dx-\frac{1}{p}\int_{\R^3}u_+^pdx, \\
&\tilde{J}_\infty(u) \coloneqq I_\infty^\prime(u)u=\int_{\R^3}|\nabla u|^2+2m\mu u^2-2qm u ^2\phi_{u}-u_+^{p} dx, \\
&\tilde{P}_\infty(u) \coloneqq \int_{\R^3}\frac12|\nabla u|^2+3m\mu u^2-\frac52 qm u ^2\phi_{u}-\frac{3}{p}u_+^{p} dx.
\end{align*}

Let $\mathcal{A}\equiv \{u\in H^1\ | \  \tilde{I}_\infty^\prime(u)=0, \tilde{I}_\infty(u)=E_\infty, \mbox{ and } \max_{\R^3}u=u(0)\}.$ We note that $\mathcal{A}\neq \emptyset$. Indeed, if $u\in \mathcal{M}_\infty$ satisfies $I_\infty(u)=E_\infty$,  we see that $|u|$ satisfies $\tilde{I}_\infty(|u|)=E_\infty$ and $\tilde{I}_\infty^\prime(|u|)=0$. \begin{prop}\label{arq1}
For $3< p<6$, there exist positive constants $C_1$ and $C_2$ independent of $U\in \mathcal{A}$ such that  for $U\in \mathcal{A}$,
\[
U(x)+|\nabla U(x)|\le C_1 \exp(-C_2|x|).  
\]
Moreover, $\inf_{U\in \mathcal{A}}\|U\|_{L^\infty}>0$.
\end{prop}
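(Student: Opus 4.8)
The plan is to prove the uniform exponential decay and the uniform positive lower bound on $\|U\|_{L^\infty}$ simultaneously, exploiting that every $U \in \mathcal{A}$ is a nonnegative solution of the single limiting equation \eqref{a1} (equivalently of $-\Delta U + 2m\mu U - 2qmU\phi_U = U_+^{p-1}$ with $U_+ = U$), together with the fact that $\tilde{I}_\infty(U) = E_\infty$ is a fixed finite energy. First I would record the basic a priori control: from $\tilde{J}_\infty(U) = 0$ and $\tilde{P}_\infty(U) = 0$, the same algebraic combination used in the proof of Proposition \ref{gmmq1} (now in the limiting regime, i.e. taking $G_\infty$) gives $\frac{5p-12}{2}E_\infty$ as an integral of manifestly nonnegative quantities controlling $\|\nabla U\|_{L^2}^2$ and $\|U\|_{L^2}^2$; hence there is a constant $K = K(p,m,\mu,q,E_\infty)$ with $\|U\|_{H^1} \le K$ for all $U \in \mathcal{A}$. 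This uniform $H^1$ bound is the one input everything else rests on.

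Next I would upgrade to a uniform $L^\infty$ bound. Since $0 \le -\phi_U \le$ (a multiple of) $\|U\|_{L^{12/5}}^2$ by the standard Riesz-potential estimate, and $\|U\|_{L^{12/5}}$ is controlled by $\|U\|_{H^1} \le K$, the zeroth-order coefficient in $-\Delta U = -2m\mu U + 2qmU\phi_U + U^{p-1}$ is bounded, and a Brezis-Kato / Moser iteration (or the De Giorgi-Nash-Moser estimate applied to the subcritical equation, using $p < 6$) yields $\|U\|_{L^\infty} \le C(K)$ uniformly in $U \in \mathcal{A}$. Standard elliptic regularity then gives uniform $C^{1,\alpha}_{loc}$ bounds on $U$ and on $\nabla U$. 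For the exponential decay, fix $\rho$ large enough (depending only on $K$) that $\|U\|_{L^\infty(\R^3 \setminus B_\rho)}$ is so small that $2qm|\phi_U| + U^{p-2} \le m\mu$ outside $B_\rho$ — this uses $U(x) \to 0$ as $|x| \to \infty$ (from $U \in H^1$, $\Delta U \in L^\infty$), but crucially the radius $\rho$ can be chosen uniformly because $\phi_U \to 0$ at infinity with a rate governed only by $\|U\|_{L^1 \cap L^\infty} \le C(K)$ and because $\|U\|_{L^\infty(\R^3\setminus B_\rho)} \to 0$ uniformly in $U$ (Strauss/compactness: a non-uniform decay would produce, along a sequence $U_n$ and $|x_n|\to\infty$, a nontrivial weak limit after translation, contradicting $U_n \in H^1$ bounded with fixed energy — or more simply, the radial-type decay $U(x) \le C(K)|x|^{-1}$ valid for all $U\in\mathcal{A}$ via the Newtonian bound). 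Outside $B_\rho$, $U$ is then a nonnegative subsolution of $-\Delta U + m\mu U \le 0$, so comparison with the barrier $C_1 e^{-C_2|x|}$, $C_2 = \sqrt{m\mu/2}$ and $C_1$ chosen from the uniform bound $\sup_{\partial B_\rho} U \le C(K)$, yields $U(x) \le C_1 e^{-C_2|x|}$; the gradient estimate follows from interior $C^{1,\alpha}$ estimates applied on unit balls centered at $|x| = r$, since the right-hand side there is itself exponentially small.

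Finally, for $\inf_{U \in \mathcal{A}} \|U\|_{L^\infty} > 0$: testing the equation against $U$ gives $\|\nabla U\|_{L^2}^2 + 2m\mu\|U\|_{L^2}^2 = 2qm\int U^2\phi_U + \|U\|_{L^p}^p \le \|U\|_{L^p}^p \le \|U\|_{L^\infty}^{p-2}\|U\|_{L^2}^2$ (dropping the nonpositive $\phi_U$ term), so $\|U\|_{L^\infty}^{p-2} \ge 2m\mu$, which is a uniform positive lower bound — in fact this already closes that clause without any compactness.

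The main obstacle is the \emph{uniformity} of the decay radius $\rho$ in the barrier argument: one must rule out that mass of $U$ escapes to infinity as one varies $U$ over $\mathcal{A}$. The cleanest fix is the Newtonian-potential pointwise bound $U(x) \le C(K)(1+|x|)^{-1}$ valid for every $U\in\mathcal{A}$ — which follows by writing $U = (2m\mu - \Delta)^{-1}(2qmU\phi_U + U^{p-1})$, noting the right-hand side is uniformly bounded in $L^1 \cap L^\infty$, and using that convolution with the Bessel (or even the Newtonian) kernel of such a function decays like $|x|^{-1}$ with constant depending only on its $L^1\cap L^\infty$ norm; this makes the choice of $\rho$ transparently uniform and then the exponential barrier takes over.
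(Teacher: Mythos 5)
Most of your outline is sound and coincides with the paper's: the uniform $H^1$ bound for $\mathcal{A}$ from the Nehari--Pohozaev combination (this is exactly \eqref{tt1}), the uniform $L^\infty$ bound by elliptic estimates, the barrier argument plus interior gradient estimates once one knows $U$ is uniformly small outside a \emph{uniform} radius, and the lower bound on $\|U\|_{L^\infty}$ (your direct computation $2m\mu\le\|U\|_{L^\infty}^{p-2}$, using $\phi_U\le 0$, is in fact cleaner than the paper's contradiction argument and is correct).

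The genuine gap is precisely at the step you flag as the main obstacle, and your proposed fix does not work. The claimed bound $U(x)\le C(K)(1+|x|)^{-1}$, obtained by writing $U=(2m\mu-\Delta)^{-1}(2qmU\phi_U+U^{p-1})$ and invoking only the $L^1\cap L^\infty$ norm of the right-hand side, is false: convolution of the Bessel (or Newtonian) kernel with a function controlled only in $L^1\cap L^\infty$ need not decay at any rate determined by those norms. For example, a bump of unit height and small radius $r$ centred at an arbitrarily distant point $x_0$ has small $L^1$ norm and unit $L^\infty$ norm, yet the convolution at $x_0$ is of size $r^2$, independent of $|x_0|$; the kernel estimate only controls the far-field part $\{|y-x|\ge|x|/2\}$ by $\|f\|_{L^1}/|x|$, while the near-field part requires smallness of $f$ near $x$ --- which is exactly the uniform decay you are trying to prove, so the argument is circular. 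Your alternative remark (``a non-uniform decay would produce, after translation, a nontrivial weak limit, contradicting $U_n\in H^1$ bounded with fixed energy'') is the right idea but, as stated, yields no contradiction: a bounded sequence with fixed energy can perfectly well have nontrivial translated weak limits. What is missing is the energy-splitting argument the paper uses: if $U_i(x_i)\not\to 0$ with $|x_i|\to\infty$, then (using the normalization $\max U_i=U_i(0)$) both $U_i$ and $U_i(\cdot+x_i)$ have nontrivial weak limits $U,V$, each a nontrivial solution of \eqref{a1}; since on critical points $\tilde I_\infty$ reduces via \eqref{tt1} to a positive definite quadratic form, integrating that form over two disjoint balls $B(0,R)$ and $B(x_i,R)$ and letting $R$ be large forces $E_\infty\ge \tilde I_\infty(U)+\tilde I_\infty(V)-\epsilon\ge 2E_\infty-\epsilon$, a contradiction because $E_\infty>0$. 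Without this bookkeeping (or some substitute ruling out mass escaping to infinity uniformly over $\mathcal{A}$), the uniform choice of the radius $\rho$, and hence the uniform exponential decay, is not established.
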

\begin{proof}
Let  $U\in \mathcal{A}$. It follows from 
\begin{equation}\label{tt1}
\begin{aligned}
E_\infty&=\tilde{I}_\infty(U)=\tilde{I}_\infty(U)-\frac{2}{5p-12}\tilde{J}_\infty(U)-\frac{p-4}{5p-12}\tilde{P}_\infty(U)\\
&=\int_{\R^3}\frac{2(p-3)}{5p-12}|\nabla U|^2+\frac{2(p-2)}{5p-12}m\mu U^2dx 
\end{aligned}
\end{equation}where $U\in \mathcal{A},$
that $\mathcal{A}$ is bounded in $H^1$ if $3< p<6$. Then, since 
\begin{align*}
\|\phi_{U}+|U|^{p-2}\|_{L^\frac{6}{p-2}(\Omega)}&\le \|\phi_{U}\|_{L^\frac{6}{p-2}(\Omega)}+\|U\|_{L^6(\Omega)}^{p-2}\le |\Omega|^{\frac{p-2}{6}-\frac{1}{6}}\|\phi_{U}\|_{L^6(\Omega)}+\|U\|_{L^6(\Omega)}^{p-2}\\
& \le C\big(|\Omega|^{\frac{p-2}{6}-\frac{1}{6}}\|U\|_{H^1}^2 +\|U\|_{H^1}^{p-2}\big),
\end{align*}where  $3<p<6$, $U\in \mathcal{A}$, $\Omega$ is a bounded domain in $\R^3$ and $C$ is a positive constant independent of $U\in \mathcal{A}$,
we see that $\mathcal{A}$ is bounded in $L^\infty$ (see \cite[Theorem 4.1]{HL}).

We claim that $\lim_{|x|\rightarrow \infty} U(x)=0$ uniformly for $U\in \mathcal{A}$. Indeed, contrary to our claim, suppose that there exist $\{U_i\}_{i=1}^\infty\subset \mathcal{A}$ and $\{x_i\}_{i=1}^\infty\subset \R^N$ satisfying $\lim_{i\rightarrow \infty}|x_i|=\infty$ and $\liminf_{i\rightarrow \infty}U_i(x_i)>0$. Denote $V_i\equiv U_i(\cdot+x_i)$. We note that if  $u_i\rightharpoonup u$ in $H^1$,  $\phi_{u_i}\rightharpoonup \phi_{u}$ in $D^{1,2}$. Then  if  $u_i\rightharpoonup u$ in $H^1$,  for $\psi\in C_0^\infty(\R^3)$, 
\begin{equation}\label{a131}
\int_{\R^3}(u_i \phi_{u_i}-u\phi_{u})\psi dx=\int_{\R^3}(u_i-u) \phi_{u_i}\psi+u(\phi_{u_i}-\phi_{u})\psi dx=o(1)
\end{equation}
as $i\rightarrow \infty$.
By \eqref{a131} and the fact that $\{U_i, V_i\}_{i=1}^\infty$ is bounded in $H^1$, we see that $U_i$ and $V_i$ converge to $U$ and $V$ weakly in $H^1$ as $i\rightarrow \infty$ , up to a subsequence, respectively, where $U$ and $V$ are non-trivial solutions of \eqref{a1}. It follows from \eqref{tt1}
that for $2R\le |x_i|$,
\begin{equation}\label{eee1q}
\begin{aligned}
E_\infty&=\liminf_{i\rightarrow \infty}\tilde{I}_\infty(U_i)=\liminf_{i\rightarrow \infty}\int_{\R^3}\frac{2(p-3)}{5p-12}|\nabla U_i|^2+\frac{2(p-2)}{5p-12}m\mu U_i^2dx\\
&\ge\liminf_{i\rightarrow \infty}\int_{B(0,R)}\frac{2(p-3)}{5p-12}|\nabla U_i|^2 +\frac{2(p-2)}{5p-12}m\mu U_i^2dx\\
&\qquad+\liminf_{i\rightarrow \infty}\int_{B(x_i,R)}\frac{2(p-3)}{5p-12}|\nabla U_i|^2+\frac{2(p-2)}{5p-12}m\mu U_i^2dx\\
&\ge \int_{B(0,R)}\frac{2(p-3)}{5p-12}|\nabla U |^2 +\frac{2(p-2)}{5p-12}m\mu U ^2dx\\
&\qquad+\int_{B(0,R)}\frac{2(p-3)}{5p-12}|\nabla V|^2+\frac{2(p-2)}{5p-12}m\mu V^2dx.
\end{aligned}
\end{equation}
 Since
\[
\tilde{I}_\infty(U), \tilde{I}_\infty(V)\ge \tilde{I}_\infty(W) \mbox{ for any } W\in \mathcal{A},
\]
if we take large $R>0$ in \eqref{eee1q}, we deduce a contradiction. This implies that $\lim_{|x|\rightarrow \infty}U(x)=0$ uniformly for $U\in \mathcal{A}$.

We note that for large $|x|$,
\begin{equation*}
\begin{aligned}
\phi_{U}(x)&=-\frac{qm}{4\pi}\int_{\R^3}\frac{U ^2(y)}{|x-y|}dy =-\frac{qm}{4\pi}\int_{B(x,R)}\frac{U^2(y)}{|x-y|}dy-\frac{qm}{4\pi}\int_{\R^3\setminus B(x,R)}\frac{U ^2(y)}{|x-y|}dy\\
&=o(1)R^2+O(1)\frac{1}{R}=o(1) 
\end{aligned}
\end{equation*}
uniformly in $U\in \mathcal{A}$. Then, by the comparison principle and the elliptic estimates, we see that for $U\in \mathcal{A}$,
\[
U(x)+|\nabla U(x)|\le C_1 \exp(-C_2|x|),  
\]
where  $C_1$ and $C_2$ are positive constants  independent of $U\in \mathcal{A}$. 

To show $\inf_{U\in \mathcal{A}}\|U\|_{L^\infty}>0$, we assume that there exists $\{U_i\}_{i=1}^\infty\subset \mathcal{A}$ such that $\|U_i\|_{L^\infty}\rightarrow 0$ as $i\rightarrow \infty$.  Then, since $U_i$ satisfies
\[
-\Delta U_i+2m\mu U_i-U_i^{p-1}\le -\Delta U_i+ 2m\mu U_i -2qm U_i \phi_{U_i}-U_i^{p-1}=0 \mbox{ in }\R^3,
\]
we see that $\|U_i\|_{H^1}\rightarrow  0$ as $i\rightarrow \infty$, which is a contradiction to \eqref{tt1}.

\end{proof}
For a fixed $U_0\in \mathcal{A}$, we define $\gamma(t)(x)=t^2U_0(tx)$. It follows from
\[
\tilde{I}_\infty(\gamma(t))=\frac12\int_{\R^3}t^3|\nabla U_0|^2+2m\mu tU_0^2-qmt^3U_0 ^2\phi_{U_0 } dx-\frac{t^{2p-3}}{p}\int_{\R^3}U_0^pdx
\]
that for $3<p<6$, there exists $t_0>1$ such that $\tilde{I}_\infty(\gamma(t))<0$ for $t\ge t_0$. Moreover, by \cite[Lemma 3.3]{R} and the fact that $U_0$ is a critical point of $\tilde{I}_\infty$, we see that for $3<p<6$, $t=1$ is a unique critical point of $\tilde{I}_\infty(\gamma(t))$, corresponding to its maximum.

We define 
\[
\hat{e}_c \coloneqq \max_{t\in[0,t_0]}\tilde{I}_c(\gamma(t)), \quad \text{and} \quad e_c\coloneqq \inf_{\Gamma\in \mathcal{W}}\max_{s\in[0,1]}\tilde{I}_c(\Gamma(s)),
\]
where $\mathcal{W}\equiv\{\Gamma\in C([0,1],H^1)\ | \ \Gamma(0)=0, \Gamma(1)=\gamma(t_0)\}$. 

\begin{prop}\label{prop-upper-estimate} 
Let $3<p<6$. Then we have
\[
\limsup_{c\rightarrow \infty} \hat{e}_c\le E_\infty.\]
\end{prop}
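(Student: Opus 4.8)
The plan is to fix $U_0\in\mathcal{A}$ and exploit that the path $\gamma(t)(x)=t^2U_0(tx)$ was designed so that $\tilde I_\infty(\gamma(t))$ attains its maximum exactly at $t=1$ with value $\tilde I_\infty(\gamma(1))=\tilde I_\infty(U_0)=E_\infty$. The key observation is that $\hat e_c=\max_{t\in[0,t_0]}\tilde I_c(\gamma(t))$ differs from $\max_{t\in[0,t_0]}\tilde I_\infty(\gamma(t))=E_\infty$ only through $c$-dependent terms, so it suffices to show those terms vanish uniformly on the compact set $\{\gamma(t):t\in[0,t_0]\}$ as $c\to\infty$. First I would write out
\[
\tilde I_c(\gamma(t))-\tilde I_\infty(\gamma(t))=\frac12\int_{\R^3}-\frac{\mu^2}{c^2}\gamma(t)^2\,dx-\frac12\int_{\R^3}\Bigl(q\bigl(m-\tfrac{\mu}{c^2}\bigr)\gamma(t)^2\Phi_{\gamma(t)}-qm\,\gamma(t)^2\phi_{\gamma(t)}\Bigr)dx,
\]
and bound each piece. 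The term $\frac{\mu^2}{c^2}\int\gamma(t)^2$ is $O(c^{-2})$ uniformly in $t\in[0,t_0]$ because $\{\gamma(t)\}$ is a bounded subset of $H^1$ (indeed $U_0$ has the uniform exponential decay of Proposition \ref{arq1}, so $\|\gamma(t)\|_{H^1}$ is uniformly bounded for $t\in[0,t_0]$).

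The main work is the potential term. I would first show $\Phi_{\gamma(t)}\to\phi_{\gamma(t)}$ in $D^{1,2}$ uniformly in $t\in[0,t_0]$ as $c\to\infty$, where $\Phi_v$ solves \eqref{sta3} with $\bar m=mc$, i.e. $-\Delta\Phi_v+\tfrac{q^2}{c^2}v^2\Phi_v=-q(m-\tfrac{\mu}{c^2})v^2$, and $\phi_v$ solves \eqref{sta2}, $-\Delta\phi_v=-qmv^2$. Subtracting the two equations, testing the difference against $\Phi_v-\phi_v$, and using the uniform $L^\infty$ bound $0\ge\Phi_v\ge-\tfrac{c^2}{q}(m-\tfrac{\mu}{c^2})$ together with the uniform Sobolev bounds on $v=\gamma(t)$, one gets $\|\Phi_{\gamma(t)}-\phi_{\gamma(t)}\|_{D^{1,2}}\le C(c^{-2}+c^{-2})\to0$ uniformly in $t$; these are exactly the kind of estimates collected in the Appendix, so I would cite them. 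From this, $\bigl|\int q(m-\tfrac{\mu}{c^2})\gamma(t)^2\Phi_{\gamma(t)}-qm\gamma(t)^2\phi_{\gamma(t)}\bigr|\to0$ uniformly in $t\in[0,t_0]$ by splitting $q(m-\tfrac{\mu}{c^2})-qm=O(c^{-2})$ and applying Hölder with the uniform bounds on $\gamma(t)$ and on $\Phi_{\gamma(t)},\phi_{\gamma(t)}$ in the relevant $L^6$ / $L^{6/5}$ spaces.

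Combining, $\sup_{t\in[0,t_0]}|\tilde I_c(\gamma(t))-\tilde I_\infty(\gamma(t))|\to0$, hence
\[
\limsup_{c\to\infty}\hat e_c=\limsup_{c\to\infty}\max_{t\in[0,t_0]}\tilde I_c(\gamma(t))\le\max_{t\in[0,t_0]}\tilde I_\infty(\gamma(t))=E_\infty,
\]
which is the claim. The step I expect to be the main obstacle is making the convergence $\Phi_{\gamma(t)}\to\phi_{\gamma(t)}$ \emph{uniform in $t$}: one must be careful that the constants in the elliptic estimates depend only on $\|\gamma(t)\|_{H^1}$ (or on $\|U_0\|$-type quantities), which requires the uniform bounds from Proposition \ref{arq1} rather than just boundedness for each fixed $t$. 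Everything else is Hölder's inequality and the explicit $c^{-2}$ gains coming from the differences $\frac{\mu^2}{c^2}$ and $q(m-\tfrac{\mu}{c^2})-qm$ in the two functionals.
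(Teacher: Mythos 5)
Your proposal is correct and follows essentially the same route as the paper: the paper likewise compares $\tilde I_c(\gamma(t))$ with $\tilde I_\infty(\gamma(t))$ uniformly in $t\in[0,t_0]$ via the Appendix estimate (Lemma \ref{yyy1}) together with the scaling $\phi_{t^2U_0(t\cdot)}=t^2\phi_{U_0}(t\cdot)$, and then uses that $t=1$ is the unique maximum of $\tilde I_\infty(\gamma(t))$ with value $E_\infty$. The uniformity in $t$ that you flag as the main obstacle is handled exactly as you suggest, through the constants in Lemma \ref{yyy1} depending only on $\|\gamma(t)\|_{H^1}$, which is bounded on $[0,t_0]$ by scaling (exponential decay is not actually needed for this step).
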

\begin{proof}
We see  from Lemma \ref{yyy1} and the scaling $\phi_{t^2U_0(t\cdot)}=t^2\phi_{U_0}(t\cdot),$ that for $t\in[0,t_0]$,
\begin{equation}\label{aq4}
\begin{aligned}
\tilde{I}_c(\gamma(t))&=\frac12\int_{\R^3}|t^3(\nabla U_0)(tx)|^2+\Big(2m\mu-\frac{\mu^2}{c^2}\Big)t^4U_0^2(tx)-q\Big(m-\frac{\mu}{c^2}\Big)t^4 U_0^2(t x) \Phi_{t^2U_0(t\cdot)} dx\\
&\qquad -\frac{t^{2p}}{p}\int_{\R^3}(U_0(tx))^pdx\\
&=\frac12\int_{\R^3}|t^3(\nabla U_0)(tx)|^2+ 2m\mu t^4U_0^2(tx)-q m t^4 U_0^2(t x) \phi_{t^2U_0(t\cdot)} dx\\
&\qquad  -\frac{t^{2p}}{p}\int_{\R^3}(U_0(tx))^pdx+o(1)\\
&= \frac12 \int_{\R^3}t^3|\nabla U_0|^2+2m\mu tU_0^2-qmt^3U_0^2\phi_{U_0} dx-\frac{t^{2p-3}}{p}\int_{\R^3}(U_0)^pdx+o(1)\\
&=\tilde{I}_\infty(\gamma(t))+o(1),
\end{aligned}
\end{equation}
where $o(1)$ is uniform in $t\in[0,t_0]$ as $c\rightarrow \infty$. 
Thus, since $t=1$ is a unique maximum point of $\tilde{I}_\infty(\gamma(t))$  for $3<p<6$,  we deduce that  
\[
\hat{e}_c = \max_{s\in[0,1]}\tilde{I}_c(\gamma(t_0s))=\tilde{I}_\infty(U_0)+o(1) =E_\infty+o(1)
\]as $c\rightarrow \infty$.
\end{proof}

\begin{prop}\label{path1}
Let $3<p<6$. Then we have
\[
\liminf_{c\rightarrow \infty}e_c\ge E_\infty.\]
\end{prop}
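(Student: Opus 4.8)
The plan is to bound $e_c$ from below by the infimum of $\tilde I_c$ over a Pohozaev--Nehari type set and then pass to the limit $c\to\infty$. Put $\tilde G_c:=2\tilde J_c-\tilde P_c$, so that
\[
\tilde G_c(u)=\int_{\R^3}\tfrac32|\nabla u|^2+\tfrac12\Big(2m\mu-\tfrac{\mu^2}{c^2}\Big)u^2-\Big(\tfrac{q}{c}\Big)^2\Phi_u^2u^2-\tfrac32 q\Big(m-\tfrac{\mu}{c^2}\Big)u^2\Phi_u-\tfrac{2p-3}{p}u_+^p\,dx,
\]
and let $\tilde{\mathcal M}_c:=\{u\in H^1\setminus\{0\}:\tilde G_c(u)=0\}$. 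First I would show that, for $c$ large, every $\Gamma\in\mathcal W$ meets $\tilde{\mathcal M}_c$. Near $u=0$ the quadratic part of $\tilde G_c$ dominates: by the uniform bound $-\tfrac{c^2}{q}(m-\tfrac{\mu}{c^2})\le\Phi_u\le0$ one has $\big(\tfrac{q}{c}\big)^2\Phi_u^2u^2\le qm\,|\Phi_u|u^2$, hence the nonlocal part of $\tilde G_c$ is $O(\|u\|_{H^1}^4)$ and $\tilde G_c>0$ on $\{\|u\|_{H^1}=\rho\}$ for $\rho$ small, uniformly in large $c$. At the other end, the scaling identities of \eqref{aq4} and Lemma \ref{yyy1} give $\tilde G_c(\gamma(t_0))=\tilde G_\infty(\gamma(t_0))+o(1)$, and $\tilde G_\infty(\gamma(t_0))=t_0\,\tfrac{d}{dt}\tilde I_\infty(\gamma(t))\big|_{t=t_0}<0$ since $t=1$ is the unique maximum of $t\mapsto\tilde I_\infty(\gamma(t))$ for $3<p<6$. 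A connectedness argument along $\Gamma$ (take the last $s_0$ with $\|\Gamma(s_0)\|_{H^1}=\rho$, so that $\Gamma\ne0$ on $[s_0,1]$ and $\tilde G_c\circ\Gamma$ changes sign there) produces $\bar s$ with $\Gamma(\bar s)\in\tilde{\mathcal M}_c$, whence $\max_s\tilde I_c(\Gamma(s))\ge\tilde E_c:=\inf_{\tilde{\mathcal M}_c}\tilde I_c$ and therefore $e_c\ge\tilde E_c$. It thus suffices to prove $\liminf_{c\to\infty}\tilde E_c\ge E_\infty$.

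The second ingredient is the coercive identity on $\tilde{\mathcal M}_c$: subtracting $\tfrac1{2p-3}\tilde G_c$ (which vanishes there) gives
\[
\tilde I_c(u)=\tfrac{p-3}{2p-3}\!\int|\nabla u|^2+\tfrac{p-2}{2p-3}\!\int\!\Big(2m\mu-\tfrac{\mu^2}{c^2}\Big)u^2+\tfrac{p-3}{2p-3}q\Big(m-\tfrac{\mu}{c^2}\Big)\!\int u^2|\Phi_u|+\tfrac1{2p-3}\Big(\tfrac{q}{c}\Big)^2\!\int\Phi_u^2u^2,
\]
all terms nonnegative for $3<p<6$; the analogous computation for $\tilde I_\infty,\tilde G_\infty$ shows (as in \cite{R}) that $E_\infty=\inf_{\tilde{\mathcal M}_\infty}\tilde I_\infty$. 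Now choose $u_c\in\tilde{\mathcal M}_c$ with $\tilde I_c(u_c)\le\tilde E_c+o(1)$; since $\tilde E_c\le e_c\le\hat e_c\le E_\infty+o(1)$ by Proposition \ref{prop-upper-estimate}, the identity yields a uniform $H^1$ bound on $\{u_c\}$, while on $\tilde{\mathcal M}_c$ one also gets, absorbing the $\Phi_u$ terms as above, $\tfrac32\int|\nabla u_c|^2+\tfrac12 m\mu\int u_c^2\le\tfrac{2p-3}{p}\int (u_c)_+^p$, hence $\|u_c\|_{L^p}\ge\rho_0>0$ uniformly. By Lions' lemma there are $x_c$ with $\int_{B_1(x_c)}u_c^2\ge\delta>0$; replacing $u_c$ by $u_c(\cdot+x_c)$ (both $\tilde I_c$ and $\tilde G_c$ are translation invariant) we may assume $u_c\rightharpoonup u_*\ne0$ in $H^1$ along a subsequence.

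It remains to pass to the limit. Using the Appendix estimates on $\Phi_{u_c}$ (Lemma \ref{yyy1} and its companions) to replace the NMKG nonlocal terms by the NSP ones up to $o(1)$, together with Brezis--Lieb splittings of $\int (u_c)_+^p$ and of the quartic nonlocal term along $u_c=u_*+v_c$, one obtains $\tilde I_c(u_c)=\tilde I_\infty(u_*)+\tilde{\mathcal I}_\infty(v_c)+o(1)$ and $0=\tilde G_c(u_c)=\tilde G_\infty(u_*)+\tilde{\mathcal G}_\infty(v_c)+o(1)$, where $\tilde{\mathcal I}_\infty,\tilde{\mathcal G}_\infty$ are the NSP energy/constraint functionals. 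The coercive identity applied to $v_c$ forces $\tilde{\mathcal I}_\infty(v_c)\ge0$, and a no--dichotomy argument, in the spirit of the minimization step in the proof of Proposition \ref{gmmq1} (if $v_c\not\to0$, splitting $u_c$ into two nonvanishing bumps would contradict $\tilde E_c\le E_\infty+o(1)$ by subadditivity of the levels), gives $v_c\to0$ in $H^1$. Hence $u_*\in\tilde{\mathcal M}_\infty$ and $\tilde E_c+o(1)=\tilde I_c(u_c)\to\tilde I_\infty(u_*)\ge E_\infty$, which together with $e_c\ge\tilde E_c$ proves the claim.

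The step I expect to be the main obstacle is the last one: controlling the two NMKG nonlocal terms $\big(\tfrac{q}{c}\big)^2\int\Phi_{u_c}^2u_c^2$ and $q(m-\tfrac{\mu}{c^2})\int u_c^2\Phi_{u_c}$ in the limit $c\to\infty$ along a merely weakly convergent sequence, and excluding the loss of compactness in the limiting NSP minimization. The failure of weak continuity of $u\mapsto\int u_+^p$ and of the quartic nonlocal term is exactly what makes this delicate, and it must be handled by combining the uniform $\Phi_u$ estimates from the Appendix with the Brezis--Lieb lemma and the subadditivity of the minimization levels.
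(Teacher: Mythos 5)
Your first half is sound, and it is a genuinely different route from the paper's. The paper never introduces a constrained level for $\tilde I_c$: it simply writes $\tilde I_c(\Gamma(t))=\tilde I_\infty(\Gamma(t))+G_c(t)$ along any $\Gamma\in\mathcal W$, shows $|G_c(t)|=o(1)$ via Lemma \ref{yyy1}, and quotes $\max_{t}\tilde I_\infty(\Gamma(t))\ge E_\infty$ from \cite[Lemma 2.4]{AP}, so the whole proof is a few lines. Your reduction $e_c\ge\tilde E_c:=\inf_{\tilde{\mathcal M}_c}\tilde I_c$ (uniform positivity of $\tilde G_c$ on a small sphere, $\tilde G_c(\gamma(t_0))=\tilde G_\infty(\gamma(t_0))+o(1)<0$, intermediate value along each path) is correct; in effect you are re-proving the cited lemma at the level $c<\infty$ instead of invoking it at $c=\infty$, at the price of having to prove $\liminf_{c\to\infty}\tilde E_c\ge E_\infty$, and that is where the argument breaks down.

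The gap is in the last step. First, the claim that ``the coercive identity applied to $v_c$ forces $\tilde{\mathcal I}_\infty(v_c)\ge 0$'' is unjustified: the identity expresses $\tilde I_\infty(v)$ as nonnegative terms plus $\tfrac{1}{2p-3}\tilde G_\infty(v)$, and your splitting only gives $\tilde G_\infty(u_*)+\tilde G_\infty(v_c)=o(1)$, so neither sign is known individually; for the same reason $u_*\in\tilde{\mathcal M}_\infty$ does not follow. Second, the step meant to rescue this -- ``if $v_c\not\to0$, two nonvanishing bumps contradict $\tilde E_c\le E_\infty+o(1)$ by subadditivity'' -- is exactly the hard compactness assertion, and it cannot be borrowed from Proposition \ref{gmmq1} or \ref{annul1}: there the sequences are (almost) critical points of the free functional, so every nonvanishing translated weak limit is a nontrivial solution and automatically carries energy at least $E_\infty$ (resp.\ $\mathcal S$). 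Your $u_c$ are only almost minimizers over the constraint $\tilde G_c=0$; you extract no Palais--Smale sequence and no Lagrange-multiplier information, so weak limits of translates solve no equation and have no a priori energy lower bound, and the ``subadditivity of levels'' you appeal to is not established. A workable repair avoids strong convergence altogether: split according to the sign of $\tilde G_\infty(v_c)$; if $\tilde G_\infty(v_c)\ge -o(1)$ then $\tilde G_\infty(u_*)\le 0$, the fibering map $t\mapsto \tilde I_\infty(t^2u_*(t\cdot))$ has its maximum at some $t_*\le 1$ lying on $\tilde{\mathcal M}_\infty$, and the weakly lower semicontinuous quantity $\tilde I_\infty(u_*)-\tfrac1{2p-3}\tilde G_\infty(u_*)$ (which bounds $\liminf\tilde I_c(u_c)$ from below) dominates the value at $t_*$, hence $E_\infty$; if instead $\tilde G_\infty(v_c)\le-\delta$, the same monotone-along-the-fiber quantity evaluated at $v_c$ already exceeds $E_\infty$. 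Finally, note that any such argument also needs $\inf_{\tilde{\mathcal M}_\infty}\tilde I_\infty\ge E_\infty$ for the truncated ($u_+$) functionals on possibly sign-changing functions: Ruiz's identification \eqref{a3} is stated for $|u|^p$, your limits $u_*$, $v_c$ need not be nonnegative (no maximum principle is available since they are not solutions), so this requires a short extra argument (e.g.\ $\tilde I_\infty(u)\ge I_\infty(u_+)$, $G_\infty(u_+)\le\tilde G_\infty(u)$, plus the fibering characterization), which ``as in \cite{R}'' does not cover.
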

\begin{proof}
We note that for $\Gamma\in \mathcal{W}$,
\begin{align*}
\tilde{I}_c(\Gamma(t))&=\frac12\int_{\R^3}|\nabla \Gamma(t)|^2+ 2m\mu\Gamma^2(t)-qm \Gamma^2(t) \phi_{\Gamma(t)} dx-\frac{1}{p}\int_{\R^3}(\Gamma(t))_+^pdx\\
&\qquad -\frac{1}{c^2}\int_{\R^3}\mu^2\Gamma^2(t)- q\mu \Gamma^2(t) \Phi_{\Gamma(t)} dx-\frac12 qm\int_{\R^3}\Gamma^2(t)(\Phi_{\Gamma(t)} -\phi_{\Gamma(t)})dx\\
&=\tilde{I}_\infty(\Gamma(t))+G_c(t),
\end{align*}
where $G_c(t)\equiv  -\frac{1}{c^2}\int_{\R^3}\mu^2\Gamma^2(t)- q\mu \Gamma^2(t) \Phi_{\Gamma(t)} dx-\frac12 qm\int_{\R^3}\Gamma^2(t)(\Phi_{\Gamma(t)} -\phi_{\Gamma(t)})dx$. By Lemma \ref{yyy1},
we have
\begin{align*}
|G_c(t)|=o(1) \mbox{ uniformly in } t\in [0,1] \mbox{ as } c\rightarrow \infty.
\end{align*}
 Then, since
\[
\max_{t\in[0,1]}\tilde{I}_\infty(\Gamma(t))\ge E_\infty,
\]
where $\Gamma\in \mathcal{W}$ (see \cite[Lemma 2.4]{AP}),
 we have
\begin{align*}
e_c&\ge E_\infty+\inf_{\Gamma\in \mathcal{W}}\max_{t\in[0,1]}G_c(t) \ge  E_\infty-\inf_{\Gamma\in \mathcal{W}}\max_{t\in[0,1]}|G_c(t)|= E_\infty+o(1)
\end{align*}
as $c\rightarrow \infty$.
\end{proof}

We define
\[
\mathcal{X} \equiv \{U(\cdot-y)\ | \ U\in \mathcal{A}, y\in \R^3\}
\]
and 
\[
N_d(\mathcal{X})\equiv \{u\in H^1\ | \ \inf_{v\in \mathcal{X}}\|u-v\|_{H^1}\le d\},
\]
where   $d>0$ is a constant  and $\mathcal{A}\equiv \{u\in H^1\ | \ \tilde{I}_\infty^\prime(u)=0, \tilde{I}_\infty(u)=E_\infty, \mbox{ and } \max_{\R^3}u=u(0)\}.$
\begin{prop}\label{annul1}
Let $3<p<6$.   For large $c>0$, for small $d>0$, and for any $d^\prime\in (0,d)$, there exists $\nu\equiv \nu(d,d^\prime)>0$ independent of $c>0$ such that
\[
\inf\{\|\tilde{I}_c^\prime(u)\|_{H^{-1}}\ | \ \tilde{I}_c(u)\le \hat{e}_c, u\in N_d(\mathcal{X})\setminus N_{d^\prime}(\mathcal{X})\}\ge \nu>0.
\]
\end{prop}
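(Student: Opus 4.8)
The statement is a standard "uniform lower bound on the gradient in an annular neighborhood" estimate, and I would prove it by contradiction following the scheme in \cite{BJ}. Suppose the conclusion fails: then there are sequences $c_n\to\infty$, and functions $u_n\in N_d(\mathcal{X})\setminus N_{d'}(\mathcal{X})$ with $\tilde{I}_{c_n}(u_n)\le \hat{e}_{c_n}$ and $\|\tilde{I}_{c_n}'(u_n)\|_{H^{-1}}\to 0$. The first step is to show that $\{u_n\}$ is bounded in $H^1$: since $u_n\in N_d(\mathcal{X})$ and $\mathcal{X}$ consists of translates of the $H^1$-bounded set $\mathcal{A}$ (bounded by Proposition \ref{arq1}, via \eqref{tt1}), the triangle inequality gives $\|u_n\|_{H^1}\le \sup_{\mathcal{A}}\|U\|_{H^1}+d$. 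The second step is a Palais--Smale-type analysis: using $\tilde{I}_{c_n}(u_n)\le\hat e_{c_n}\to E_\infty$ (Proposition \ref{prop-upper-estimate}) together with $\tilde{I}_{c_n}'(u_n)\to 0$ and the uniform smallness of the relativistic correction terms $G_{c_n}$ and $\Phi_{u_n}-\phi_{u_n}$ (Lemma \ref{yyy1}), deduce that $\tilde{I}_\infty(u_n)\to E_\infty$ and $\tilde{I}_\infty'(u_n)\to 0$ in $H^{-1}$; that is, $\{u_n\}$ is a Palais--Smale sequence for $\tilde{I}_\infty$ at the level $E_\infty$.

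**Main step.** The heart of the argument is then a concentration-compactness / profile-decomposition analysis of this Palais--Smale sequence for $\tilde I_\infty$ at level $E_\infty$, which is the \emph{lowest} nontrivial critical level. Using the nonlocal term's weak-continuity property (the identity \eqref{a131} in the proof of Proposition \ref{arq1}) one shows vanishing is impossible (it would force $u_n\to 0$ in $L^p$, hence $\tilde I_\infty(u_n)\to 0\neq E_\infty$, contradicting $E_\infty>0$ which follows from \eqref{tt1} and $\inf_{\mathcal A}\|U\|_{L^\infty}>0$). Dichotomy at the minimal level is also excluded because splitting would produce two nontrivial pieces each carrying positive energy, exceeding $E_\infty$ --- this is exactly the mechanism already used in \eqref{eee1q}. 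Hence, up to translations $y_n$, $u_n(\cdot+y_n)$ converges strongly in $H^1$ to some $U_\infty$ with $\tilde I_\infty'(U_\infty)=0$ and $\tilde I_\infty(U_\infty)=E_\infty$; since critical points of $\tilde I_\infty$ are nonnegative (by the $u_-$ argument) and we may further translate so the maximum is attained at the origin, $U_\infty\in\mathcal A$, so $u_n\to U_\infty(\cdot - y_n)\in\mathcal{X}$ in $H^1$. Therefore $\operatorname{dist}_{H^1}(u_n,\mathcal{X})\to 0$, so for large $n$ we have $u_n\in N_{d'}(\mathcal{X})$, contradicting $u_n\notin N_{d'}(\mathcal{X})$.

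**Remaining point and obstacle.** One still has to check the claim that $\nu$ can be chosen \emph{independent of $c$}: this is automatic from the contradiction setup, since any $c$-dependent failure produces the sequence $c_n\to\infty$ above (the values $c$ are large, which is all we use; for $c$ in a compact range away from $\sqrt{2m/\mu}$ one could argue separately, but the statement only asserts it for large $c$). The main obstacle is making the concentration-compactness step clean in the presence of the nonlocal Schr\"odinger--Poisson term and the $c$-dependence: one must verify that the error terms $\Phi_{u_n}-\phi_{u_n}$, $\Phi_{u_n}^2$, and the $\mu^2/c^2$ corrections are genuinely $o(1)$ in $H^{-1}$ \emph{uniformly over the bounded set} $N_d(\mathcal{X})$, not just pointwise, so that the reduction to a Palais--Smale sequence of $\tilde I_\infty$ is legitimate; this is where Lemma \ref{yyy1} (the appendix estimates) does the work. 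A secondary technical point is ensuring the translated limit really lands in $\mathcal A$ and not merely in the set of critical points at level $E_\infty$ --- but relocating the maximum to the origin handles this, and $\mathcal X$ is defined precisely to be closed under such translations.
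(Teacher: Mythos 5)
Your setup coincides with the paper's: argue by contradiction along a sequence $c_n\to\infty$, use the appendix estimates (Lemmas \ref{aq1}, \ref{yyy1}) to get $\tilde{I}_{c_n}(u_n)=\tilde{I}_\infty(u_n)+o(1)$ and $\|\tilde{I}_{c_n}'(u_n)-\tilde{I}_\infty'(u_n)\|_{H^{-1}}\to0$ uniformly on $H^1$-bounded sets, and reduce the proposition to strong convergence, up to translations, of a bounded sequence with $\tilde{I}_\infty'(u_n)\to0$. One small inaccuracy: from $\tilde{I}_{c_n}(u_n)\le\hat e_{c_n}$ and Proposition \ref{prop-upper-estimate} you only get $\limsup_n\tilde{I}_\infty(u_n)\le E_\infty$, not that the level equals $E_\infty$; your exclusion of vanishing quietly uses the exact level (it is fixable: if vanishing occurred, $\tilde{I}_\infty'(u_n)u_n\to0$ would force $\|u_n\|_{H^1}\to0$, which is incompatible with $u_n\in N_d(\mathcal{X})$ for small $d$, since \eqref{tt1} bounds $\mathcal{A}$ away from $0$ in $H^1$).

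The genuine gap is in your main step. You dismiss dichotomy by saying it is ``exactly the mechanism already used in \eqref{eee1q}'', but \eqref{eee1q} rests on the identity \eqref{tt1}, which is derived from the Nehari and Pohozaev identities and therefore holds only for \emph{exact} critical points of $\tilde{I}_\infty$; it gives nothing for a Palais--Smale sequence. To exclude splitting you need an actual global-compactness/splitting lemma for the nonlocal functional: a Brezis--Lieb-type decomposition of the Coulomb term, the fact that weak limits along diverging translations are critical points, and asymptotic nonnegativity (indeed superadditivity) of the energies of the pieces. That is precisely where the paper spends its effort, and it does the splitting by hand rather than invoking a general profile decomposition: it writes $u_c=v_c+w_c$ with a cutoff at scale $c$ centered at the point $y_c$ furnished by the $N_d(\mathcal{X})$ condition, uses the uniform exponential decay of $\mathcal{A}$ (Proposition \ref{arq1}) together with the lower bound \eqref{arq2} to rule out concentration at distance $\ge c$ from $y_c$ when $d$ is small (so the closeness to $\mathcal{X}$ is used for much more than keeping the sequence away from $0$), exploits the sign of the Coulomb cross terms in \eqref{fi1} and \eqref{rez1} to get $\tilde{I}_c(u_c)\ge\tilde{I}_\infty(v_c)+\tilde{I}_\infty(w_c)+o(1)$, and uses $\|w_c\|_{H^1}\le2d$ to guarantee $\tilde{I}_\infty(w_c)\ge0$; only then does comparison with $\hat e_c\to E_\infty$ force $v_c$ to converge to a translate of an element of $\mathcal{A}$ and $\|w_c\|_{H^1}\to0$. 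If you wish to keep your route, you must either prove such a splitting lemma for the Schr\"odinger--Poisson functional $\tilde{I}_\infty$ or cite one explicitly; as written, the crucial compactness step is asserted rather than proved, and the one justification you offer for it does not transfer from exact critical points to Palais--Smale sequences.
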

\begin{proof}
Let $\{c_i\}_{i=1}^\infty$ be such that $\lim_{i\rightarrow \infty}c_i=\infty$. It suffices to show that for small $d>0$,  if
\begin{align*}
u_{c_i}\in N_{d}(\mathcal{X}),\ \ \tilde{I}_{c_i}(u_{c_i})\le \hat{e}_{c_i},\ \mbox{ and }\ \|\tilde{I}^\prime_{c_i}(u_{c_i})\|_{H^{-1}}\rightarrow 0 
\end{align*}
as $i\rightarrow \infty$, then 
\[
\inf_{v\in \mathcal{X}}\|u_{c_i}-v\|_{H^1}\rightarrow 0 \mbox{ as } i\rightarrow \infty.
\]
 For the sake of simplicity of notation, we write $c$ for $c_i$. Since $u_c\in  N_{d}(\mathcal{X})$, we have
\begin{equation}\label{appo}
\|u_c(x)-U_c(x-y_c)\|_{H^1}\le d,
\end{equation}
where $U_c\in \mathcal{A}$ and $y_c\in \R^3$. We define $\eta\in C_0^\infty(\R^3)$ such that $0\le \eta\le1$, $\eta(x)=1$ for $|x|\le 1$, $\eta(x)=0$ for $|x|\ge 2$, and $|\nabla \eta|\le 2$. Also, we set  $\tilde{\eta}_c(x)=\eta(\frac{x-y_c}{c})$. We divide the proof into three steps.\\
\noindent {\bf Step 1.} $ \tilde{I}_c(u_c)\ge \tilde{I}_\infty(v_c)+\tilde{I}_\infty(w_c)+o(1)$ as $c\rightarrow \infty$, where $v_c =\tilde{\eta}_cu_c$ and $w_c=(1-\tilde{\eta}_c)u_c$. 

We claim first that for $\alpha\in (2,6)$,
\begin{equation}\label{arq4}
\lim_{c\rightarrow \infty}\int_{B(y_c,2c)\setminus B(y_c,c)}|u_c|^{\alpha}dx=0.
\end{equation}
Suppose that there exist  $z_c\in B(y_c,2c)\setminus B(y_c,c)$ and $R>0$ such that 
\begin{equation}\label{arq3}
\liminf_{c\rightarrow \infty}\int_{B(z_c,R)}|u_c|^2dx>0.
\end{equation}
Denote $\tilde{u}_c=u_c(\cdot+z_c)$. We note that, by Lemma \ref{yyy1} and the fact that $\|u_c\|_{H^1}$ is bounded, for $\psi\in C_0^\infty(\R^3)$,
\begin{equation}\label{bqqwq1}
\begin{aligned}
&\tilde{I}_c^\prime(\tilde{u}_c)\psi  \\
&=\int_{\R^3}\nabla \tilde{u}_c\cdot \nabla \psi+\Big(2m\mu-\frac{\mu^2}{c^2}\Big)\tilde{u}_c \psi-\Big(\frac{q}{c}\Big)^2 \tilde{u}_c \psi \Phi_{\tilde{u}_c}^2-2q\Big(m-\frac{\mu}{c^2}\Big)   \tilde{u}_c \psi\Phi_{\tilde{u}_c}-(\tilde{u}_c)_+^{p-1} \psi dx\\
&=\int_{\R^3}\nabla \tilde{u}_c\cdot \nabla \psi+2m\mu\tilde{u}_c \psi-2qm   \tilde{u}_c \psi\phi_{\tilde{u}_c}-(\tilde{u}_c)_+^{p-1} \psi dx\\
&\qquad + \int_{\R^3}-\frac{\mu^2}{c^2}\tilde{u}_c \psi-\Big(\frac{q}{c}\Big)^2 \tilde{u}_c \psi \Phi_{\tilde{u}_c}^2+2q\frac{\mu}{c^2} \tilde{u}_c \psi\Phi_{\tilde{u}_c}-2qm   \tilde{u}_c \psi(\Phi_{\tilde{u}_c}-\phi_{\tilde{u}_c})dx\\
&=\tilde{I}_\infty^\prime(\tilde{u}_c)\psi+o(1)
\end{aligned}
\end{equation}
as $c\rightarrow \infty$.
By \eqref{a131} and the assumption that $\|\tilde{I}^\prime_{c}(u_{c})\|_{H^{-1}}\rightarrow 0$ as $c\rightarrow \infty$ , we have $u_c(\cdot+z_c)\rightharpoonup \tilde{U}\not\equiv0$ in $H^1$, where $\tilde{U}$ satisfies $\tilde{I}_\infty^\prime(\tilde{U})=0$. By \eqref{tt1}, we have
\begin{equation}\label{arq2}
\int_{\R^3}|\nabla \tilde{U}|^2+\tilde{U}dx\ge  E_\infty\Big(\max\Big\{\frac{2(p-3)}{5p-12},\frac{2(p-2)}{5p-12}m\mu\Big\}\Big)^{-1}.
\end{equation}
 Then, by Proposition \ref{arq1} and the fact that $|z_c-y_c|\ge c$, we see that for $R>0$,
\begin{align*}
d^2&\ge \|u_c(x)-U_c(x-y_c)\|_{H^1}^2 =\|\tilde{u}_c(x)-U_c(x+z_c-y_c)\|_{H^1}^2\\
&\ge \|\tilde{u}_c(x)-U_c(x+z_c-y_c)\|_{H^1(B(0,R))}^2 =\|\tilde{u}_c(x)\|_{H^1(B(0,R))}^2+o(1)\ge \|\tilde{U}\|_{H^1(B(0,R))}^2
\end{align*}
as $c\rightarrow \infty$. If we take small $d>0$, by \eqref{arq2}, we deduce a contradiction. Since there does not exists such  a sequence $\{z_c\}$ satisfying \eqref{arq3}, by \cite[Lemma 1.1]{L}, we deduce \eqref{arq4}. Then, by \eqref{arq4}, we have
\begin{equation}\label{arq5}
\int_{\R^3}(u_c)_+^p-(v_c)_+^p-(w_c)_+^pdx=o(1)
\end{equation} as $c\rightarrow \infty$,
where $v_c$ and $w_c$ are given in \eqref{arq4} above.  By \eqref{arq4} and Lemma \ref{iiop},
\begin{align*}
\int_{B(y_c,2c)\setminus B(y_c,c)}u_c^2|\phi_{u_c}|dx&\le \|\phi_{u_c}\|_{L^6(B(y_c,2c)\setminus B(y_c,c))}\|u_c^2\|_{L^{6/5}(B(y_c,2c)\setminus B(y_c,c))}\\
&\le C_1\|u_c\|_{H^1}^2\|u_c\|_{L^{12/5}(B(y_c,2c)\setminus B(y_c,c))}^2\rightarrow 0 
\end{align*}
as $c\rightarrow \infty$, where $C_1$ is a positive constant. From this and the fact that $|\nabla \eta_c|\le 2/c$, we see that
\begin{equation}\label{fi1}
\begin{aligned}
&\int_{\R^3}v_c^2 \phi_{v_c}+w_c^2 \phi_{w_c}-u_c^2 \phi_{u_c}dx\\
&=\int_{ B(y_c,c)\cup (\R^3\setminus  B(y_c,2c) )}v_c^2 \phi_{v_c}+w_c^2 \phi_{w_c}-u_c^2 \phi_{u_c}dx+o(1)\\
&=\frac{qm}{4\pi}\int_{B(y_c,c)\cup (\R^3\setminus  B(y_c,2c) )}\int_{\R^3}\frac{u_c^2(x)u_c^2(y)-v_c^2(x)v_c^2(y)-w_c^2(x)w_c^2(y)}{|x-y|}dydx+o(1)\\
&=\frac{qm}{4\pi}\int_{B(y_c,c)}\int_{\R^3}\frac{u_c^2(x)(u_c^2(y)-v_c^2(y))}{|x-y|}dydx\\
&\qquad +\frac{qm}{4\pi}\int_{\R^3\setminus B(y_c,2c)}\int_{\R^3}\frac{u_c^2(x)(u_c^2(y)-w_c^2(y))}{|x-y|}dydx+o(1)\ge o(1) 
\end{aligned}
\end{equation}
as $c\rightarrow \infty$. Thus, by \eqref{arq5}, \eqref{fi1}, Lemma \ref{yyy1} and the fact that $|\nabla \eta_c|\le 2/c$, 
we have
\begin{align*}
\tilde{I}_c(u_c)&=\frac12\int_{\R^3}|\nabla u_c|^2+ 2m\mu u_c^2-qm u_c^2 \phi_{u_c} dx-\frac{1}{p}\int_{\R^3}(u_c)_+^pdx\\
&\qquad -\frac{1}{2c^2}\int_{\R^3}\mu^2u_c^2- q\mu u_c^2 \Phi_{u_c} dx-\frac12 qm\int_{\R^3}u_c^2(\Phi_{u_c} -\phi_{u_c})dx\\
&\ge \tilde{I}_\infty(v_c)+\tilde{I}_\infty(w_c)+\int_{\R^3}\nabla v_c\cdot \nabla w_c+2m\mu v_c w_cdx+o(1)\\
&=\tilde{I}_\infty(v_c)+\tilde{I}_\infty(w_c)+\int_{\R^3}(1-\tilde{\eta}_c)\tilde{\eta}_c|\nabla u_c|^2+2m\mu (1-\tilde{\eta}_c)\tilde{\eta}_cu_c^2dx+o(1)\\
&\ge \tilde{I}_\infty(v_c)+\tilde{I}_\infty(w_c)+o(1)
\end{align*}
as $c\rightarrow \infty$.

\noindent{\bf Step 2.} $\tilde{I}_\infty(w_c)\ge 0$ for large $c$, where  $w_c=(1-\tilde{\eta}_c)u_c$.

We note that, by  Lemma \ref{iiop},
\begin{align*}
\Big|\int_{\R^3} w_c^2 \phi_{w_c} dx\Big|&\le \|\phi_{w_c}\|_{L^6}\| w_c^2\|_{L^{6/5}} \le C_2\|w_c\|_{H^1}^4,
\end{align*}
where $C_2$ is a positive constant independent of $c$. Moreover, by \eqref{appo} and  Proposition \ref{arq1}, $\|w_c\|_{H^1}\le 2d$ for large $c>0$. Then we have
\begin{equation}\label{azi1}
\begin{aligned}
\tilde{I}_\infty(w_c)&=\frac12\int_{\R^3}|\nabla w_c|^2+ 2m\mu w_c^2-qm w_c^2 \phi_{w_c} dx-\frac{1}{p}\int_{\R^3}(w_c)_+^pdx\\
&\ge \|w_c\|_{H^1}^2\Big(\min\Big\{\frac12,m\mu\Big\}-qmC_2(\|w_c\|_{H^1}^2+\|w_c\|_{H^1}^{p-2})\Big).
\end{aligned}
\end{equation} Taking $d>0$ small, we deduce that $\tilde{I}_\infty(w_c)\ge 0$ for large $c$.

\noindent{\bf Step 3.} $v_c\rightarrow \tilde{V}(\cdot-z)$ in $H^1$, where $\tilde{V}\in \mathcal{A}$, $z\in \R^3$ and $v_c=\tilde{\eta}_cu_c$.

Let $W_c\equiv v_c(\cdot+y_c)$. We can assume that $W_c\rightharpoonup W\not\equiv 0$ in $H^1$, up to a subsequence, as $c\rightarrow \infty.$ Since $W_c-u_c(\cdot+y_c)\rightharpoonup0$ in $H^1$, $\phi_{W_c}-\phi_{u_c(\cdot+y_c)}\rightharpoonup0$ in $D^{1,2}$. Then for any $\psi \in C_0^\infty(\R^3)$,
\begin{align*}
 \int_{\R^3}(W_c\phi_{W_c}-u_c(\cdot+y_c)\phi_{u_c(\cdot+y_c)})\psi dx &= \int_{\R^3} (W_c-W)\big(\phi_{W_c}-\phi_{u_c(\cdot+y_c)}\big)\psi +W\big(\phi_{W_c}-\phi_{u_c(\cdot+y_c)}\big)\psi \\
&\qquad +(W_c -u_c(\cdot+y_c))\phi_{u_c(\cdot+y_c)} \psi dx\rightarrow 0
\end{align*}
as $c\rightarrow \infty$. From this, \eqref{a131}, \eqref{bqqwq1} and the assumption that $\|\tilde{I}^\prime_{c}(u_{c})\|_{H^{-1}}\rightarrow 0$ as $c\rightarrow \infty$, we can see that $W$ satisfies $\tilde{I}^\prime_\infty(W)=0$. By the maximum principle, $W$ is positive. Suppose that there exist $R>0$ and a sequence $\tilde{z}_c\in B(y_c,2c)$ satisfying
\[
\liminf_{c\rightarrow \infty}|\tilde{z}_c-y_c|=\infty \mbox{ and } \liminf_{c\rightarrow \infty}\int_{B(\tilde{z}_c,R)}|v_c|^2dx>0.
\]
Then $v_c(\cdot+z_c)$ converges weakly to $\tilde{W}$ in $H^1$, where $I_\infty^\prime(\tilde{W})=0$. By the same arguments in Step 1, we deduce a contradiction. By \cite[Lemma 1.1]{L}, we have
\begin{equation}\label{non1}
\lim_{c\rightarrow \infty}\int_{\R^3}(W_c)_+^pdx=\int_{\R^3}W^pdx.
\end{equation}
We note that
\begin{equation}\label{rez1}
\begin{aligned}
\liminf_{c\rightarrow \infty}\Big(- \int_{\R^3}W_c^2 \phi_{W_c} dx\Big)&=\liminf_{c\rightarrow \infty}\int_{\R^3}\int_{\R^3}\frac{W_c^2(x)W_c^2(y)}{|x-y|}dydx\\
&\ge \int_{\R^3}\int_{\R^3}\frac{W ^2(x)W ^2(y)}{|x-y|}dydx =-\int_{\R^3}W^2 \phi_{W} dx.
\end{aligned}
\end{equation}
Then, by \eqref{non1}, \eqref{rez1} and Lemma \ref{yyy1}, we have
\begin{equation}\label{rez}
\begin{aligned}
\liminf_{c\rightarrow \infty}\tilde{I}_\infty(W_c)&=\liminf_{c\rightarrow \infty} \frac12\int_{\R^3}|\nabla W_c|^2+ 2m\mu W_c^2-qm W_c^2 \phi_{W_c} dx-\frac{1}{p}\int_{\R^3}(W_c)_+^pdx\\
&\ge \tilde{I}_\infty(W).
\end{aligned}
\end{equation}
By \eqref{rez}, the results of Step1 and Step 2, and the assumption that $\tilde{I}_c(u_c)\le \hat{e}_c$, we see that $\tilde{I}_\infty(W)=E_\infty.$  By \eqref{non1}, \eqref{rez1} and \eqref{rez},  we have 
\begin{align*}
\limsup_{c\rightarrow \infty} &\int_{\R^3}|\nabla W_c|^2+ 2m\mu W_c^2-qm W_c^2 \phi_{W_c} dx =\int_{\R^3}|\nabla W|^2+ 2m\mu W^2-qm W^2 \phi_{W} dx\\
&\le \int_{\R^3}|\nabla W|^2+ 2m\mu W^2+\limsup_{c\rightarrow\infty}\Big(-\int_{\R^3}qm W_c^2 \phi_{W_c} dx\Big),
\end{align*}
which implies that $W_c\rightarrow W$ in $H^1$. By \eqref{azi1}, the result of Step 1 and the fact that $\hat{e}_c\rightarrow E_\infty$, we have for small $d>0$,
\begin{align*}
\hat{e}_c\ge \tilde{I}_c(u_c)&\ge \tilde{I}_\infty(v_c)+\frac12\min\Big\{\frac12,m\mu\Big\} \|w_c\|_{H^1}^2+o(1)\ge E_\infty+\frac12\min\Big\{\frac12,m\mu\Big\} \|w_c\|_{H^1}^2+o(1) 
\end{align*}  
as $c\rightarrow \infty$, which implies that $\|w_c\|_{H^1}\rightarrow 0$ as $c\rightarrow \infty$. Thus, letting $W=\tilde{V}(\cdot-z)$, where $\tilde{V}\in \mathcal{A}$ and $z\in \R^3$, we have
\[
\|u_c-\tilde{V}(\cdot-y_c-z)\|_{H^1}\le \|v_c(\cdot+y_c)-\tilde{V}(\cdot-z)\|_{H^1}+\|w_c\|_{H^1}\rightarrow 0
\]
as $c\rightarrow \infty$.
\end{proof}

\begin{prop}\label{annul2}
Let $3<p<6$. For a fixed $c\in(\sqrt{\frac{\mu}{m}},\infty)$, suppose that for some $b\in \R$, there exists a sequence $\{u_j\}\subset H^1$ satisfying
\begin{align*}
&u_j\in N_d(\mathcal{X}),\\
&\|\tilde{I}_c^\prime(u_j)\|_{H^{-1}}\rightarrow 0,\\
&\tilde{I}_c(u_j)\rightarrow b \mbox{ as } j\rightarrow \infty,
\end{align*} 
where $d>0$ is a constant.
Then for small  $d>0$,  $b$ is a critical value of $\tilde{I}_c$, and the sequence $\{u_j(\cdot+x_j)\}_{j=1}^\infty\subset H^1$ has a strongly convergent subsequence in $H^1$, where $x_j\in \R^3$.
\end{prop}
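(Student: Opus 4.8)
The plan is to run a global‑compactness (splitting) argument for $\tilde I_c$ near $\mathcal X$, in the same spirit as the proof of Proposition~\ref{annul1}, but now with $c$ fixed and $j\to\infty$. First, since $u_j\in N_d(\mathcal X)$ and $\mathcal A$ is bounded in $H^1$ by \eqref{tt1}, the sequence $\{u_j\}$ is bounded in $H^1$. Write $\|u_j-U_j(\cdot-y_j)\|_{H^1}\le d$ with $U_j\in\mathcal A$, $y_j\in\R^3$, set $\tilde u_j:=u_j(\cdot+y_j)$, and pass to a subsequence so that $\tilde u_j\rightharpoonup u$ and $U_j\rightharpoonup U_0$ in $H^1$. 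By the uniform exponential decay of Proposition~\ref{arq1}, $U_j\to U_0$ in $L^r(\R^3)$ for $2\le r<6$, while testing the Euler--Lagrange equation of $\tilde I_\infty$ with $U_j$ and using $\phi_{U_j}\le 0$ gives $\|U_j\|_{H^1}\ge c_0>0$ with $c_0$ independent of $j$; hence $U_0\not\equiv 0$, and weak lower semicontinuity of the norm in $\|\tilde u_j-U_j\|_{H^1}\le d$ yields $\|u-U_0\|_{H^1}\le d$, so $u\not\equiv 0$ once $d<c_0$. Since $\tilde I_c$ is translation invariant, $\|\tilde I_c'(\tilde u_j)\|_{H^{-1}}\to 0$; passing to the limit in $\tilde I_c'(\tilde u_j)\psi$ for $\psi\in C_0^\infty(\R^3)$ by weak convergence, local compactness, and $\Phi_{\tilde u_j}\rightharpoonup\Phi_u$ in $D^{1,2}$ (cf.\ \eqref{a131}, \eqref{bqqwq1}, Lemmas~\ref{yyy1} and \ref{iiop}) shows $\tilde I_c'(u)=0$.

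The main step is to exclude loss of mass at infinity for $v_j:=\tilde u_j-u\rightharpoonup 0$. Suppose there are $R>0$ and a sequence $z_j$ with $\liminf_j\int_{B(z_j,R)}|v_j|^2\,dx>0$. If $\{z_j\}$ had a bounded subsequence, then $v_j(\cdot+z_j)\rightharpoonup 0$, contradicting the mass bound through the compactness of $H^1(B(0,R))\hookrightarrow L^2(B(0,R))$; hence $|z_j|\to\infty$. Then $v_j(\cdot+z_j)\rightharpoonup w\not\equiv 0$, and since $u(\cdot+z_j)\rightharpoonup 0$ we get $\tilde u_j(\cdot+z_j)=u_j(\cdot+y_j+z_j)\rightharpoonup w$; the limit passage above, applied to this translated sequence, gives $\tilde I_c'(w)=0$. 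On the other hand, from $\|\tilde u_j-U_j\|_{H^1}\le d$ and the pointwise bound $U_j(x)+|\nabla U_j(x)|\le C_1e^{-C_2|x|}$ of Proposition~\ref{arq1} we obtain, for every fixed $R>0$, $\|\tilde u_j(\cdot+z_j)\|_{H^1(B(0,R))}\le d+\|U_j\|_{H^1(B(z_j,R))}=d+o(1)$ as $|z_j|\to\infty$, hence $\|w\|_{H^1(B(0,R))}\le d$ for all $R$, i.e.\ $\|w\|_{H^1}\le d$. But testing $\tilde I_c'(w)=0$ with $w$, using $m-\mu/c^2>0$ and $2m\mu-\mu^2/c^2>0$ (both hold since $c>\sqrt{\mu/m}$), $\Phi_w\le 0$, and the estimates $\|\Phi_w\|_{D^{1,2}}\le C_c\|w\|_{H^1}^2$, $\int w^2\Phi_w^2\le C_c\|w\|_{H^1}^6$, $\int w^2|\Phi_w|\le C_c\|w\|_{H^1}^4$, one gets $\|w\|_{H^1}^2\big(\min\{1,2m\mu-\mu^2/c^2\}-C_c(\|w\|_{H^1}^2+\|w\|_{H^1}^{p-2})\big)\le 0$, so every nontrivial critical point of $\tilde I_c$ satisfies $\|w\|_{H^1}\ge\delta_c$ for some $\delta_c>0$ depending only on $c,m,\mu,q,p$. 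Choosing $d<\min\{c_0,\delta_c\}$ we reach a contradiction; therefore $\sup_z\int_{B(z,R)}|v_j|^2\,dx\to 0$, and by \cite[Lemma~1.1]{L}, $v_j\to 0$ in $L^r(\R^3)$ for every $2<r<6$.

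Finally I upgrade to strong $H^1$ convergence. Evaluating $\tilde I_c'(\tilde u_j)(\tilde u_j-u)\to 0$ and using $v_j\to 0$ in $L^r$ ($2<r<6$), the uniform bounds $\|\Phi_{\tilde u_j}\|_{L^6}\le C$, $\|\Phi_{\tilde u_j}\|_{L^\infty}\le K_c$, and Hölder's inequality with suitable exponents, every term except the two quadratic ones is $o(1)$; passing to a further subsequence with $\int|\nabla\tilde u_j|^2\to A$ and $\int\tilde u_j^2\to B$ gives $A-\int|\nabla u|^2+(2m\mu-\tfrac{\mu^2}{c^2})\big(B-\int u^2\big)=0$, and since $A\ge\int|\nabla u|^2$, $B\ge\int u^2$, and $2m\mu-\mu^2/c^2>0$, this forces $A=\int|\nabla u|^2$ and $B=\int u^2$, hence $\|\tilde u_j\|_{H^1}\to\|u\|_{H^1}$; together with $\tilde u_j\rightharpoonup u$ this yields $\tilde u_j\to u$ in $H^1$. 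Continuity of $\tilde I_c$ then gives $\tilde I_c(u)=\lim_j\tilde I_c(\tilde u_j)=b$, so $b$ is a critical value of $\tilde I_c$ and $\{u_j(\cdot+y_j)\}$ converges strongly in $H^1$ along this subsequence. The delicate point is exactly the exclusion of vanishing mass at infinity: it is here that the neighborhood constraint $u_j\in N_d(\mathcal X)$ (with $d$ small relative to the $c$-dependent threshold $\delta_c$) together with the uniform exponential decay of $\mathcal A$ from Proposition~\ref{arq1} are indispensable, since otherwise the Palais--Smale sequence could split off a nontrivial critical point of $\tilde I_c$ running to infinity.
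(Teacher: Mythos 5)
Your argument is correct in substance, but it follows a genuinely different route from the paper's proof of Proposition \ref{annul2}. The paper translates by the centers $x_{j}$ coming from the neighborhood condition, notes that by Proposition \ref{arq1} every $\tilde u_{j}$ has $H^1$-norm at most $2d$ outside a fixed ball $B(0,R_0)$, and then tests $\tilde I_c'(\tilde u_{j})-\tilde I_c'(u_0)$ against $\zeta(\tilde u_{j}-u_0)$ with a cutoff $\zeta$ supported outside $B(0,R_0)$; the exterior nonlocal and power terms are controlled by the small exterior norms (the estimates \eqref{1qqp3}--\eqref{1qqp5}) and absorbed into the left-hand side for small $d$, giving strong convergence on the exterior, after which Rellich--Kondrachov on the interior ball finishes the proof. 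You instead exclude escape of mass to infinity by a Lions-type dichotomy: a bubble at $|z_j|\to\infty$ would produce a nontrivial critical point $w$ of $\tilde I_c$ with $\|w\|_{H^1}\le d$ (via the $d$-neighborhood constraint and the uniform exponential decay of $\mathcal A$), contradicting the a priori lower bound $\|w\|_{H^1}\ge\delta_c$ for nontrivial critical points obtained by testing $\tilde I_c'(w)w=0$; then you conclude by evaluating $\tilde I_c'(\tilde u_j)(\tilde u_j-u)$. This is closer in spirit to the paper's own Step 1/Step 3 of Proposition \ref{annul1} than to its proof of \ref{annul2}; it makes very explicit where the smallness of $d$ enters (no second bubble can fit inside $N_d(\mathcal X)$), at the price of needing the lower bound $\delta_c$ -- note that with the sign combination $-\Phi_w\big[(q/c)^2\Phi_w+2q(m-\tfrac{\mu}{c^2})\big]\ge 0$ (as in \eqref{m1}) this bound is in fact uniform for $c$ bounded away from $\sqrt{\mu/m}$, which also removes any worry about $d$ depending on $c$ when the proposition is fed into Proposition \ref{prop3}. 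Two small slips you should repair: in your exponent bookkeeping the term $(q/c)^2\int w^2\Phi_w^2$ yields $C_c\|w\|_{H^1}^4$ after dividing by $\|w\|_{H^1}^2$, not $\|w\|_{H^1}^2$ (harmless for the conclusion); and in the nontriviality of the weak limit, the lower bound you extract by testing is on $\|U_j\|_{H^1}$ (equivalently on $\|U_j\|_{L^p}$ via Sobolev), so you should phrase the argument with the $L^p$ (or $L^2$) norm, which does pass to the limit under the strong $L^r$ convergence furnished by the uniform decay, and then require $d$ small compared with that constant; weak convergence alone does not transfer the $H^1$ lower bound to $U_0$.
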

\begin{proof}
Since $u_j\in N_d(\mathcal{X})$, $\{u_j\}_{j=1}^\infty$ is bounded in $H^1$. Then we can extract a subsequence such that $\tilde{u}_{j_k}\equiv u_{j_k}(\cdot+x_{j_k})$ converges to $u_0\not\equiv0$ weakly in $H^1$ as $k\rightarrow \infty$, where $x_{j_k}\in \R^3$. It is standard to show that $u_0$ is a critical point of $I_c$.

Next, we show $\tilde{u}_{j_k}\rightarrow u_0$ in $H^1$ as $k\rightarrow \infty$. By Proposition \ref{arq1}, there exists $R_0>0$ such that  
\begin{equation}\label{1qqp1}
\|\tilde{u}_{j_k}\|_{H^1(\R^3\setminus B(0,R_0))}\le 2d.
\end{equation}
We choose a function $\zeta\in C^\infty(\R^3)$ such that
\[
\zeta(x)=\begin{cases} 1 &\mbox{ for } |x|\ge 2R_0, \\ 0 &\mbox{ for } |x|\le R_0.\end{cases}
\]
 Since $\tilde{I}_c^\prime(\tilde{u}_{j_k})(\zeta(\tilde{u}_{j_k}-u_0))-\tilde{I}_c^\prime(u_0)(\zeta(\tilde{u}_{j_k}-u_0))\rightarrow 0$ as $k\rightarrow \infty$, we deduce that
\begin{equation}\label{1qqp2}
\begin{aligned}
 &\int_{\R^3\setminus B(0,2R)}|\nabla (\tilde{u}_{j_k}-u_0)|^2+\Big(2m\mu-\frac{\mu^2}{c^2}\Big)(\tilde{u}_{j_k}-u_0)^2dx\\
&\le \int_{\R^3\setminus B(0,2R)} \Big(\frac{q}{c}\Big)^2(\tilde{u}_{j_k}- u_0) (\tilde{u}_{j_k} \Phi_{\tilde{u}_{j_k}}^2-u_0\Phi_{u_0}^2)
+2q\Big(m-\frac{\mu}{c^2}\Big)(\tilde{u}_{j_k}-u_0)(\tilde{u}_{j_k}\Phi_{\tilde{u}_{j_k}}-u_0\Phi_{u_0})\\
&\qquad +(\tilde{u}_{j_k}-u_0)((\tilde{u}_{j_k})_+^{p-1} -(u_0)_+^{p-1}  )dx+o(1)
\end{aligned}
\end{equation}
as $k\rightarrow \infty$.
We note that, by Lemma \ref{aq1},
\begin{equation}\label{1qqp3}
\begin{aligned}
&\int_{\R^3\setminus B(0,2R)}(v- w) (v \Phi_{v}^2-w\Phi_{w}^2)dx\\
&\le \big(\|\Phi_v\|_{L^6}^2\|v\|_{L^3(\R^3\setminus B(0,2R))}+\|\Phi_w\|_{L^6}^2\|w\|_{L^3(\R^3\setminus B(0,2R))}\big)\|v-w\|_{L^3(\R^3\setminus B(0,2R))}\\
&\le C_1\big(\|v\|_{H^1}^4\|v\|_{H^1(\R^3\setminus B(0,2R))}+\| w\|_{H^1}^4\|w\|_{H^1(\R^3\setminus B(0,2R))}\big)\|v-w\|_{H^1(\R^3\setminus B(0,2R))},
\end{aligned}
\end{equation}
\begin{equation}\label{1qqp4}
\begin{aligned}
&\int_{\R^3\setminus B(0,2R)}(v- w) (v \Phi_{v}-w\Phi_{w} )dx\\
&\le \big(\|\Phi_v\|_{L^6} \|v\|_{L^3(\R^3\setminus B(0,2R))}+\|\Phi_w\|_{L^6} \|w\|_{L^3(\R^3\setminus B(0,2R))}\big)\|v-w\|_{L^2(\R^3\setminus B(0,2R))}\\
&\le C_2\big(\|v\|_{H^1}^2\|v\|_{H^1(\R^3\setminus B(0,2R))}+\| w\|_{H^1}^2\|w\|_{H^1(\R^3\setminus B(0,2R))}\big)\|v-w\|_{H^1(\R^3\setminus B(0,2R))},
\end{aligned}
\end{equation}
and
\begin{equation}\label{1qqp5}
\begin{aligned}
&\int_{\R^3\setminus B(0,2R)}((v)_+^{p-1} -(w)_+^{p-1})(v-w)dx=(p-1)\int_{\R^3\setminus B(0,2R)}(tv+(1-t)w)_+^{p-2}(v-w)^2dx\\
&\le (p-1)\|tv+(1-t)w\|_{L^p(\R^3\setminus B(0,2R))}^{p-2}\|v-w\|_{L^p(\R^3\setminus B(0,2R))}^2\\
&\le C_3\big(\| v\|_{H^1(\R^3\setminus B(0,2R))}^{p-2}+\|w\|_{H^1(\R^3\setminus B(0,2R))}^{p-2}\big)\|v-w\|_{H^1(\R^3\setminus B(0,2R))}^2,
\end{aligned}
\end{equation}
where $t\in [0,1]$. Then, by \eqref{1qqp1}-\eqref{1qqp5}, we see that for small $d>0$, 
\begin{equation}\label{rrr}
\|\tilde{u}_{j_k}-u_0\|_{H^1(\R^3\setminus B(0,2R))}\rightarrow 0
\end{equation} as $k\rightarrow \infty$. Thus, by \eqref{rrr} and the Rellich-Kondrachov compactness theorem, we see that $\tilde{u}_{j_k}\rightarrow u_0$ in $H^1$ as $k\rightarrow \infty$.
\end{proof}
\begin{prop}\label{prop3}
For $3< p<6$, there exist $\bar{c}_0>0$ and $\bar{d}_0>0$ such that for $c>\bar{c}_0$ and for $0<d<\bar{d}_0$, $\tilde{I}_c$ has a critical point $u$ in $N_d(\mathcal{X})$ with $\tilde{I}_c(u)\le \hat{e}_c$.
\end{prop}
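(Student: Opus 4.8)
The plan is to argue by contradiction, deforming the mountain–pass curve $\gamma$ into a competitor in $\mathcal{W}$ whose maximal energy drops below $E_\infty$, which is impossible once $c$ is large. I first collect the facts I use. Since $s\mapsto\gamma(t_0s)$ lies in $\mathcal{W}$ we have $e_c\le\hat e_c$, and with Propositions \ref{prop-upper-estimate} and \ref{path1} this gives $e_c,\hat e_c\to E_\infty$ as $c\to\infty$. By \eqref{aq4}, $\tilde I_c(\gamma(t))\to\tilde I_\infty(\gamma(t))$ uniformly on $[0,t_0]$, and $\tilde I_c(\gamma(t_0))<0$ for large $c$. Since $t=1$ is the unique and strict maximum point of $t\mapsto\tilde I_\infty(\gamma(t))$ on $[0,t_0]$, with $\gamma(1)=U_0\in\mathcal A\subset\mathcal X$ and $\|\gamma(t)-U_0\|_{H^1}\to0$ as $t\to1$, for each small $d>0$ there are $\delta_*>0$ and $\epsilon_0>0$, depending only on $d$ and $U_0$, such that $\gamma(t)\in N_{d/4}(\mathcal X)$ whenever $|t-1|\le\delta_*$ while $\tilde I_\infty(\gamma(t))\le E_\infty-\epsilon_0$ whenever $|t-1|\ge\delta_*$; hence $\tilde I_c(\gamma(t))\le E_\infty-\epsilon_0/2$ for large $c$ at every $t$ with $\gamma(t)\notin N_{d/4}(\mathcal X)$. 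Finally, by Proposition \ref{arq1} the set $\mathcal X$ is bounded in $H^1$ with $\inf_{\mathcal X}\|\cdot\|_{H^1}>0$, so $0\notin N_d(\mathcal X)$ and $\gamma(t_0)\notin N_d(\mathcal X)$ once $d$ is small.

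Fix $d$ small (as required by Propositions \ref{arq1}, \ref{annul1}, \ref{annul2}) and $c$ large, and suppose $\tilde I_c$ has no critical point $u\in N_d(\mathcal X)$ with $\tilde I_c(u)\le\hat e_c$. Then there is no sequence $u_j\in N_d(\mathcal X)$ with $\tilde I_c(u_j)\le\hat e_c$ and $\|\tilde I_c'(u_j)\|_{H^{-1}}\to0$: after passing to a subsequence with $\tilde I_c(u_j)\to b\le\hat e_c$ (possible since $N_d(\mathcal X)$ is bounded and $\tilde I_c$ is bounded below there, using $\Phi_u\le0$ and $m-\mu/c^2>0$), Proposition \ref{annul2} would produce a critical point of $\tilde I_c$ which, being a strong limit of translates of the $u_j$, lies in the closed translation–invariant set $N_d(\mathcal X)$ and has energy $b\le\hat e_c$, a contradiction. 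Hence $\rho:=\inf\{\|\tilde I_c'(u)\|_{H^{-1}}:u\in N_d(\mathcal X),\ \tilde I_c(u)\le\hat e_c\}>0$. Together with the uniform bound $\nu:=\nu(d,d/2)>0$ of Proposition \ref{annul1} on the annulus $N_d(\mathcal X)\setminus N_{d/2}(\mathcal X)$ at energies $\le\hat e_c$, I fix $\theta_0>0$ with $\theta_0<\tfrac18\min\{\epsilon_0,\ \nu d,\ E_\infty\}$.

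Using a standard locally Lipschitz pseudo–gradient field $W$ for $\tilde I_c$ (with $\|W\|\le1$ and $\langle\tilde I_c',W\rangle\ge\tfrac12\min\{\|\tilde I_c'\|,1\}$) and a cutoff vanishing on $\{\tilde I_c\le E_\infty-3\theta_0\}$ and equal to $1$ on $\{\tilde I_c\ge E_\infty-2\theta_0\}$, I build a deformation $\eta$ with $\tau\mapsto\tilde I_c(\eta(\tau,u))$ nonincreasing, $\eta(\tau,u)=u$ whenever $\tilde I_c(u)\le E_\infty-3\theta_0$, and $\|\partial_\tau\eta\|\le1$. I claim that for $u\in N_{d/4}(\mathcal X)$ with $\tilde I_c(u)\le\hat e_c$ one has $\tilde I_c(\eta(T,u))<E_\infty-\theta_0$ for $T$ large (depending on $c$). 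Indeed, while the trajectory stays in $N_d(\mathcal X)$ at energies in $[E_\infty-2\theta_0,\hat e_c]$, the bound $\rho$ forces the energy to decrease at a positive rate, so it reaches $E_\infty-2\theta_0$ in finite time; otherwise the trajectory leaves $N_d(\mathcal X)$, but then it must cross $N_d(\mathcal X)\setminus N_{d/2}(\mathcal X)$, where $\|\tilde I_c'\|\ge\nu$ by Proposition \ref{annul1}, and the crossing alone lowers the energy by at least $\tfrac14\nu d>2\theta_0$; since $\hat e_c-E_\infty\le\theta_0$ for $c$ large, the energy is already below $E_\infty-\theta_0$ at the moment of exit and stays there. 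Applying $\eta(T,\cdot)$ to $\gamma$ yields $\tilde\gamma(s):=\eta(T,\gamma(t_0s))\in\mathcal{W}$ (the endpoints $0$ and $\gamma(t_0)$ are fixed because their energies are $\le E_\infty-3\theta_0$), and for every $t$ either $\gamma(t)\notin N_{d/4}(\mathcal X)$ so that $\tilde I_c(\tilde\gamma(s))=\tilde I_c(\gamma(t))\le E_\infty-\epsilon_0/2<E_\infty-\theta_0$, or $\gamma(t)\in N_{d/4}(\mathcal X)$ with $\tilde I_c(\gamma(t))\le\hat e_c$, whence $\tilde I_c(\eta(T,\gamma(t)))<E_\infty-\theta_0$ by the claim. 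Thus $e_c\le\max_s\tilde I_c(\tilde\gamma(s))<E_\infty-\theta_0$, contradicting $e_c\to E_\infty$. The critical point obtained lies in $N_d(\mathcal X)$, hence is nontrivial because $\inf_{\mathcal X}\|\cdot\|_{H^1}>0$.

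I expect the technical heart, and the only delicate point, to be the construction and control of $\eta$: one must ensure the flow cannot escape to a high energy level outside $N_d(\mathcal X)$, and this is exactly where the $c$–independence of $\nu$ in Proposition \ref{annul1} is essential, since it makes crossing the annulus more expensive than the entire energy budget $\hat e_c-E_\infty+O(\theta_0)$ available for large $c$. Balancing the scales $\theta_0\ll\epsilon_0$ and $\theta_0\ll\nu d$, together with the routine bookkeeping for global existence of the pseudo–gradient flow, completes the argument.
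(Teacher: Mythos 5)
Your proposal is correct and follows essentially the same route as the paper: arguing indirectly, extracting the $c$-independent bound $\nu$ on the annulus from Proposition \ref{annul1} and a positive bound $\sigma_c$ on all of $N_d(\mathcal{X})\cap\{\tilde{I}_c\le \hat{e}_c\}$ via Proposition \ref{annul2}, and then running a pseudo-gradient deformation of the path $s\mapsto\gamma(t_0 s)$ to contradict $\limsup_c \hat{e}_c\le E_\infty\le\liminf_c e_c$ (Propositions \ref{prop-upper-estimate} and \ref{path1}), which is exactly the deformation the paper delegates to \cite[Proposition 7]{BJ}. The only small repair needed is that your vector field should also carry a cutoff in $\mathrm{dist}_{H^1}(\cdot,\mathcal{X})$ vanishing outside $N_d(\mathcal{X})$ (as in the ODE used in the proof of Proposition \ref{apro1}), since with an energy cutoff alone the pseudo-gradient need not be defined at possible critical points lying outside $N_d(\mathcal{X})$ with energy above $E_\infty-3\theta_0$; with this standard modification your crossing estimate is unaffected and the argument is complete.
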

\begin{proof}
Arguing indirectly, suppose $\tilde{I}_c^\prime(u)\neq 0$ for $u\in N_d(\mathcal{X})$ with $\tilde{I}_c(u)\le \hat{e}_c$.
By Proposition \ref{annul1} and  Proposition \ref{annul2}, we can  take positive constants $\bar{c}_0$ and $\bar{d}_0$ such that for $c>\bar{c}_0$ and for $0<d<\bar{d}_0$,
\[
\|\tilde{I}_c^\prime(u)\|_{H^{-1}}\ge \nu
\] 
for $u\in N_d(\mathcal{X})\setminus N_{d/2}(\mathcal{X})$ with $\tilde{I}_c(u)\le \hat{e}_c$, and
\[
\|\tilde{I}_c^\prime(u)\|_{H^{-1}}\ge \sigma_c
\] 
for $u\in N_d(\mathcal{X})$ with $\tilde{I}_c(u)\le \hat{e}_c$, where $\nu>0$ is a constant independent of $c$, and $\sigma_c>0$ is a constant depending on $c$. 
Then, by a deformation argument using Proposition \ref{prop-upper-estimate} and Proposition \ref{path1} (see Proposition 7 in \cite{BJ} for a detailed argument), we get a contradiction. 

\end{proof}

\section{Nonrelativistic limit of ground states for $3 < p < 6$}\label{limm1}
In this section, we complete the proof of Theorem \ref{mthm1}.
By Proposition \ref{gmmq1}, Proposition \ref{prop-upper-estimate} and Proposition \ref{prop3}, we see that for every $3< p < 6$, there exists a ground state solutions $u_c$ to \eqref{sta1} such that
\begin{equation}\label{energy bound}
\ \limsup_{c\to\infty}I_c(u_c)  \leq E_\infty.
\end{equation}

\begin{prop}\label{b1q}
Let $3< p<6$ and  $u_c$ be a  ground state solution of \eqref{sta1}. Then we have
\begin{align*}
\sup_{c>\sqrt{\frac{ \mu}{ m}}}\|u_c\|_{H^1}\le C \mbox{ and } \inf_{c>\sqrt{\frac{ \mu}{ m}}}\|u_c\|_{L^p}\ge \frac{1}{C},
\end{align*}
where $C>0$ is a constant independent of $c$.
\end{prop}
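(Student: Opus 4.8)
The plan is to derive uniform bounds directly from the identities that any critical point of $I_c$ satisfies, namely $J_c(u_c) = 0$, $P_c(u_c) = 0$, together with the energy estimate \eqref{energy bound}. First I would take a suitable linear combination of $I_c(u_c)$, $J_c(u_c)$ and $P_c(u_c)$ that cancels the nonlocal terms $\Phi_{u_c}^2 u_c^2$ and $u_c^2 \Phi_{u_c}$ and the $L^p$ term, leaving only $\|\nabla u_c\|_{L^2}^2$ and $(2m\mu - \mu^2/c^2)\|u_c\|_{L^2}^2$ with \emph{positive} coefficients; this is exactly the $c$-dependent analogue of the identity \eqref{tt1}, i.e. of $\tfrac{5p-12}{2} I_\infty = \int (p-3)|\nabla u|^2 + \tfrac{p-2}{2}(\bar m^2 - \omega^2) u^2$ used in Proposition \ref{gmmq1}. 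Concretely, for $3 < p < 6$ one expects a combination of the form $I_c(u_c) - \tfrac{2}{5p-12} J_c(u_c) - \tfrac{p-4}{5p-12} P_c(u_c)$ to equal $\int \tfrac{2(p-3)}{5p-12}|\nabla u_c|^2 + \tfrac{2(p-2)}{5p-12}(2m\mu - \tfrac{\mu^2}{c^2}) u_c^2\, dx$, up to checking the coefficients of $\Phi_{u_c}$-terms indeed vanish.

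Given such an identity, since $I_c(u_c) \le E_\infty + o(1)$ by \eqref{energy bound} and $J_c(u_c) = P_c(u_c) = 0$, the right-hand side is bounded above by $E_\infty + 1$ for all large $c$; for $c$ bounded away from $\sqrt{\mu/m}$ the coefficient $2m\mu - \mu^2/c^2$ is bounded below by a positive constant, and the finitely many remaining values of $c$ in any compact subinterval cause no trouble (one can also just note $u_c$ being a ground state with finite energy is automatically in $H^1$), so we conclude $\sup_c \|u_c\|_{H^1} \le C$. For the lower bound on $\|u_c\|_{L^p}$, I would use $J_c(u_c) = 0$ together with the sign bound $-\tfrac{c^2}{q}(m - \tfrac{\mu}{c^2}) \le \Phi_{u_c} \le 0$ (equivalently $-\tfrac{\omega}{e} \le \varphi_{u_c} \le 0$) established before Proposition \ref{gmmq1}: this sign makes $-(\tfrac{q}{c})^2 \Phi_{u_c}^2 u_c^2 - 2q(m - \tfrac{\mu}{c^2}) u_c^2 \Phi_{u_c} \ge 0$, so $0 = J_c(u_c) \ge \int |\nabla u_c|^2 + (2m\mu - \tfrac{\mu^2}{c^2}) u_c^2 - |u_c|^p\, dx \ge c_0 \|u_c\|_{H^1}^2 - \|u_c\|_{L^p}^p \ge c_0' \|u_c\|_{L^p}^2 - \|u_c\|_{L^p}^p$ by Sobolev embedding, which forces $\|u_c\|_{L^p} \ge (c_0')^{1/(p-2)} =: 1/C$, uniformly in $c$.

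The main obstacle, as in Proposition \ref{gmmq1}, is purely bookkeeping: verifying that the chosen linear combination of $I_c$, $J_c$, $P_c$ really does annihilate \emph{both} nonlocal terms ($u_c^2 \Phi_{u_c}$ and $\Phi_{u_c}^2 u_c^2$) simultaneously, and that the surviving coefficients of $|\nabla u_c|^2$ and $u_c^2$ are strictly positive for the full range $3 < p < 6$. The $\Phi^2$-term appears with coefficient $1$ in both $J_c$ and $P_c$ but $0$ in $I_c$, so it is killed by any combination $\alpha I_c + \beta J_c + \gamma P_c$ with $\beta + \gamma = 0$; then one tunes the remaining free parameter to kill the $u_c^2 \Phi_{u_c}$ term (coefficients $-q(m-\mu/c^2)/2$-type in $I_c$, $-2q(m-\mu/c^2)$ in $J_c$, $-\tfrac52 q(m-\mu/c^2)$ in $P_c$) and to kill $|u_c|^p$, and checks the two constraints are compatible. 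This is the exact $c$-analogue of \eqref{tt1}, so it must work; the only mild subtlety is tracking the $\mu/c^2$ corrections, but these are lower-order and do not affect positivity of coefficients for $c$ large, while for $c$ in a compact set away from $\sqrt{\mu/m}$ one gets the bound by continuity. Alternatively, I note one could avoid some of this by invoking \eqref{tt1}-type reasoning after passing to the limit, but deriving the bound at fixed $c$ is cleaner and is what the subsequent limiting argument in Section \ref{limm1} will need.
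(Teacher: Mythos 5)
Your proposal follows essentially the same route as the paper: the same linear combination $\tfrac{5p-12}{2}I_c-J_c+\tfrac{4-p}{2}P_c$ (yours is this divided by $\tfrac{5p-12}{2}$) together with \eqref{energy bound} for the $H^1$ bound, and $J_c(u_c)=0$ plus the pointwise bound $-\tfrac{c^2}{q}(m-\tfrac{\mu}{c^2})\le\Phi_{u_c}\le 0$ plus Sobolev embedding for the $L^p$ lower bound, exactly as in \eqref{vvp1} and \eqref{m1}. One bookkeeping claim, however, is wrong and your proposed "compatibility check" would fail: no nontrivial combination $\alpha I_c+\beta J_c+\gamma P_c$ can annihilate all three of the terms $\int(\tfrac{q}{c})^2u^2\Phi_u^2$, $\int q(m-\tfrac{\mu}{c^2})u^2\Phi_u$ and $\int|u|^p$ simultaneously (killing the $\Phi^2$-term forces $\beta=-\gamma$, killing the $u^2\Phi_u$-term then forces $\alpha=-\gamma$, and killing the $|u|^p$-term then forces $\gamma=0$). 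The combination you wrote down kills the $u^2\Phi_u$ and $|u|^p$ terms only; the $\Phi_u^2$-term survives, with coefficient $\tfrac{p-2}{2}$ (after clearing the factor $\tfrac{2}{5p-12}$), which is precisely the paper's identity \eqref{vvp1} rather than a $c$-dependent copy of \eqref{tt1}. This does not damage the argument: the surviving term is non-negative, so the left-hand side still dominates $\int(p-3)|\nabla u_c|^2+\tfrac{p-2}{2}(2m\mu-\tfrac{\mu^2}{c^2})u_c^2\,dx$ and the uniform $H^1$ bound follows as you intend — but the proof should state this sign observation instead of asserting cancellation. The remaining points (positivity of $2m\mu-\mu^2/c^2$ for $c>\sqrt{\mu/m}$, and the lower bound $\|u_c\|_{L^p}\ge 1/C$ from $0=J_c(u_c)\ge\int|\nabla u_c|^2+m\mu u_c^2-|u_c|^p\,dx$) are handled the same way as in the paper.
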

\begin{proof}
 We note by \eqref{energy bound} that  
\begin{equation}\label{vvp1}
\begin{aligned}
C_1&\ge \frac{5p-12}{2}I_c(u_c)-J_c(u_c)+\frac{4-p}{2}P_c(u_c)\\
&=\int_{\R^3}(p-3)|\nabla u_c|^2+\frac{p-2}{2}\Big(2m\mu-\frac{\mu^2}{c^2}\Big)u_c^2+\frac{p-2}{2}\Big(\frac{q}{c}\Big)^2 u_c^2\Phi_{u_c}^2dx,
\end{aligned}
\end{equation}where $C_1>0$ is a constant independent of $c$.
This implies $\|u_c\|_{H^1}$ is bounded uniformly in $c>\sqrt{\frac{ \mu}{ m}}$. Moreover,  since $J_c(u_c)=0$  and $ -\frac{1}{q}(c^2m-\mu)\le \Phi_{u_c}\le 0$, we have for $c>\sqrt{\frac{ \mu}{ m}}$,
\begin{equation}\label{m1}
\begin{aligned}
\int_{\R^3}|u_c|^p&=\int_{\R^3}|\nabla u_c|^2+\Big(2m\mu-\frac{\mu^2}{c^2}\Big)u_c^2-\Big(\frac{q}{c}\Big)^2\Phi_{u_c} u_c^2\bigg( \Phi_{u_c}+\Big(\frac{c}{q}\Big)^22q\Big(m-\frac{\mu}{c^2}\Big) \bigg)dx\\
&\ge \int_{\R^3}|\nabla u_c|^2+m\mu u_c^2dx+\Big(\frac{q}{c}\Big)^2|\Phi_{u_c}| u_c^2\Big( \Phi_{u_c}+2 \frac{1}{q}  (c^2m- \mu ) \Big)dx\\
&\ge \int_{\R^3}|\nabla u_c|^2+m\mu u_c^2dx \ge  {C_2}\Big(\int_{\R^3}|u_c|^pdx\Big)^{2/p},
\end{aligned}
\end{equation}
 where $C_2$ is a positive constant indendent of $c$.
Then we have $\int_{\R^3}|u_c|^pdx \ge \frac{1}{C}$, where $C$ is a  positive constant indendent of $c$.
\end{proof}

\begin{prop}\label{lema2}
For $3< p <6$, let $\{u_c\}_{c>\sqrt{\frac{ \mu}{ m}}}\subset H^1$ be a ground state solution of \eqref{sta1}. 
Then there exists a sequence $\{x_c\} \in \R^3$ such that 
$\bar{u}_c(\cdot)\equiv u_c(\cdot+x_c)$ converges to $u_\infty$   in $H^1(\R^3)$ as $c\rightarrow \infty$, up to a subsequence, 
where $u_\infty$ is a ground state solution of \eqref{a1}.
\end{prop}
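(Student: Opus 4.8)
The plan is to run a concentration--compactness argument on the family of ground states $\{u_c\}$: using the uniform bounds of Proposition \ref{b1q} I would extract, after suitable translations, a nonzero weak $H^1$ limit $u_\infty$; then identify $u_\infty$ as a nontrivial solution of \eqref{a1} by passing to the nonrelativistic limit in the weak formulation; and finally use the energy bound \eqref{energy bound} together with the sign--definite Nehari--Pohozaev representation of the action (available for $3<p<6$) both to prove that $u_\infty$ is a ground state of \eqref{a1} and to upgrade the weak convergence to strong $H^1$ convergence.

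\emph{Translation and a nonzero weak limit.} By Proposition \ref{b1q} the family $\{u_c\}$ is bounded in $H^1(\R^3)$ and satisfies $\inf_c\|u_c\|_{L^p}>0$; since the latter forbids the $L^2$--mass of $u_c$ from vanishing uniformly over unit balls, \cite[Lemma 1.1]{L} provides points $x_c\in\R^3$ with $\liminf_{c\to\infty}\int_{B_1(x_c)}u_c^2\,dx>0$. Setting $\bar u_c=u_c(\cdot+x_c)$, I would pass to a subsequence so that $\bar u_c\rightharpoonup u_\infty\not\equiv0$ in $H^1$, with $\bar u_c\to u_\infty$ in $L^r_{\mathrm{loc}}$ for $2\le r<6$ and a.e.

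\emph{The limit solves \eqref{a1}, and energy identification.} Each $\bar u_c$ is again a critical point of $I_c$, hence a weak solution of \eqref{sta1}. Using the bound $-\tfrac{c^2}{q}\bigl(m-\tfrac{\mu}{c^2}\bigr)\le\Phi_{\bar u_c}\le0$, the estimates of Lemma \ref{yyy1} on $\Phi_{\bar u_c}$ and on $\Phi_{\bar u_c}-\phi_{\bar u_c}$, and the weak continuity $\phi_{u_n}\rightharpoonup\phi_u$ recorded in \eqref{a131}, I would pass to the limit $c\to\infty$ in $I_c'(\bar u_c)\psi=0$ for each $\psi\in C_0^\infty(\R^3)$, exactly as in the computation \eqref{bqqwq1}, to get $I_\infty'(u_\infty)\psi=0$. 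Thus $u_\infty$ is a nontrivial solution of \eqref{a1}, whence $I_\infty(u_\infty)\ge E_\infty$ and $J_\infty(u_\infty)=P_\infty(u_\infty)=0$. On the other hand, from $J_c(\bar u_c)=P_c(\bar u_c)=0$ one has, as in \eqref{vvp1},
\[
\frac{5p-12}{2}\,I_c(\bar u_c)=\int_{\R^3}(p-3)|\nabla \bar u_c|^2+\frac{p-2}{2}\Big(2m\mu-\frac{\mu^2}{c^2}\Big)\bar u_c^2+\frac{p-2}{2}\Big(\frac{q}{c}\Big)^2\bar u_c^2\Phi_{\bar u_c}^2\,dx ,
\]
in which all three terms are nonnegative for $3<p<6$. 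Weak lower semicontinuity of the gradient and $L^2$ norms, together with the analogue of \eqref{tt1} for $u_\infty$, then gives $\liminf_{c\to\infty}\tfrac{5p-12}{2}I_c(\bar u_c)\ge\tfrac{5p-12}{2}I_\infty(u_\infty)$; since $I_c(\bar u_c)=I_c(u_c)$, combining with \eqref{energy bound} forces the chain
\[
E_\infty\le I_\infty(u_\infty)\le\liminf_{c\to\infty}I_c(u_c)\le\limsup_{c\to\infty}I_c(u_c)\le E_\infty .
\]
Hence $I_\infty(u_\infty)=E_\infty$, i.e. $u_\infty$ is a ground state of \eqref{a1}; and equality throughout forces, along a further subsequence, $\int|\nabla\bar u_c|^2\to\int|\nabla u_\infty|^2$ and $\int\bar u_c^2\to\int u_\infty^2$ (the term $(q/c)^2\bar u_c^2\Phi_{\bar u_c}^2$ vanishing by Lemma \ref{yyy1}), so that with $\bar u_c\rightharpoonup u_\infty$ one concludes $\bar u_c\to u_\infty$ strongly in $H^1(\R^3)$.

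\emph{Main obstacle.} I expect two points to need care. First, the limit passage in the equation demands \emph{uniform-in-space} control of the genuinely relativistic terms $\tfrac{\mu^2}{c^2}\bar u_c^2$, $(q/c)^2\bar u_c^2\Phi_{\bar u_c}^2$ and $\bar u_c^2(\Phi_{\bar u_c}-\phi_{\bar u_c})$; this is exactly what Lemma \ref{yyy1} is for, and it must be applied carefully since the pointwise bound on $\Phi_{\bar u_c}$ itself degenerates like $c^2$. Second, both the ground--state identification and the weak--to--strong upgrade rely on the rigidity that for $3<p<6$ the combined functional $\tfrac{5p-12}{2}I$ is a sum of nonnegative quadratic terms; this is what automatically excludes any loss of $H^1$--mass (a second bubble escaping to infinity would make $\liminf I_c(\bar u_c)$ strictly exceed $I_\infty(u_\infty)\ge E_\infty$, contradicting \eqref{energy bound}), so that no separate dichotomy analysis in the spirit of Step~1 of Proposition \ref{annul1} is required.
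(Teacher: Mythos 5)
Your proposal is correct and follows essentially the same route as the paper's proof: Lions' lemma plus the uniform bounds of Proposition \ref{b1q} to get a nonzero translated weak limit, passage to the limit in the weak formulation via Lemma \ref{yyy1}, and then the sign-definite combination $\frac{5p-12}{2}I-J+\frac{4-p}{2}P$ together with \eqref{energy bound} and weak lower semicontinuity to identify $I_\infty(u_\infty)=E_\infty$ and upgrade to strong $H^1$ convergence. The only cosmetic slip is attributing the vanishing of $(q/c)^2\int \bar u_c^2\Phi_{\bar u_c}^2$ to Lemma \ref{yyy1} rather than to the $D^{1,2}$ bound of Lemma \ref{aq1} (or simply to its nonnegativity, which is all the argument needs).
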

\begin{proof}
By Proposition \ref{b1q} and \cite[Lemma 1.1]{L}, we have
\begin{equation*}
\sup_{x\in \R^3}\int_{B_1(x)}|u_c|^2dx=\int_{B_1(x_c)}|u_c|^2dx\ge \bar{C}>0,
\end{equation*}
where $\bar{C}$ is a constant indepnedent of $c$.

 It follows from Proposition \ref{b1q} that $\{u_c\}_{c>\sqrt{\frac{ \mu}{ m}}}$ is bounded in $H^1$ uniformly in $c$. Then we may assume $\bar{u}_c\equiv u_c(\cdot+x_c)$ converges to $u_\infty\not\equiv0$ weakly in $H^1$ and strongly in $L_{loc}^q(\R^3)$, where $0<q<6$. Let $\Phi_{\bar{u}_c}$ be the solution of
\[
-\Delta \Phi+\frac{q^2}{c^2}\bar{u}_c^2\Phi=-q(m-\frac{\mu }{c^2}) \bar{u}_c^2 \mbox{ in } \R^3.
\]
Since $\|\Phi_{\bar{u}_c}\|_{D^{1,2}}\le C_1q(m-\frac{\mu }{c^2}) \|\bar{u}_c\|_{H^1}^2\le C_2,$ where $C_1, C_2>0$ are constants independent of $c$, we may assume that
\[
\Phi_{\bar{u}_c}\rightharpoonup \phi_{u_\infty} \mbox{ weakly in } D^{1,2} \mbox{ and } \Phi_{\bar{u}_c}\rightarrow \phi_{u_\infty} \mbox{ in } L^q_{loc}(\R^3),
\]
as $c\rightarrow \infty$, where $0<q<6$ and $\phi_{u_\infty}$ is a weak solution of $
-\Delta \phi+qmu_\infty^2=0.$
 Then it is standard to show that $u_\infty$ is a non-trivial weak solution of \eqref{a1}.

Next, we claim that $u_\infty$ is a ground state solution of \eqref{a1}.  We note that, since $u_\infty$ is a non-trivial weak solution of \eqref{a1}, we have 
\[
J_\infty(u_\infty)= P_\infty(u_\infty) =0
\]
and
\begin{equation}\label{vvp3}
\frac{5p-12}{2}I_\infty(u_\infty)-J_\infty(u_\infty)+\frac{4-p}{2}P_\infty(u_\infty)=\int_{\R^3}(p-3)|\nabla u_\infty|^2+ (p-2)m\mu u_\infty^2.
\end{equation}
Then, by \eqref{energy bound}, \eqref{vvp1} and \eqref{vvp3}, we have 
\begin{align*}
\frac{5p-12}{2}E_\infty&\ge \frac{5p-12}{2}\liminf_{c\rightarrow \infty}I_c( {u}_c)\\
&=\liminf_{c\rightarrow \infty}\Big(\int_{\R^3}(p-3)|\nabla u_c|^2+\frac{p-2}{2}\Big(2m\mu-\frac{\mu^2}{c^2}\Big)u_c^2+\frac{p-2}{2}\Big(\frac{q}{c}\Big)^2 u_c^2\Phi_{u_c}^2dx\Big)\\
&\ge \int_{\R^3}(p-3)|\nabla u_\infty|^2+(p-2)m\mu u_\infty^2 =\frac{5p-12}{2} I_\infty(u_\infty),
\end{align*}
which proves the claim.

Finally, to prove the strong convergence in $H^1(\R^3)$, we note that,
by \eqref{energy bound}, \eqref{vvp1}, \eqref{vvp3}, Proposition \ref{b1q}  and the fact that $\bar{u}_c$ converges to $u_\infty\not\equiv0$ weakly in $H^1$,   
\begin{align*}
\frac{5p-12}{2}E_\infty &\ge\frac{5p-12}{2}\lim_{c\rightarrow \infty}I_c(\bar{u}_c)\\
&=\lim_{c\rightarrow \infty}\int_{\R^3}(p-3)|\nabla \bar{u}_c|^2+\frac{p-2}{2}\Big(2m\mu-\frac{\mu^2}{c^2}\Big)\bar{u}_c^2+\frac{p-2}{2}\Big(\frac{q}{c}\Big)^2 \bar{u}_c^2\Phi_{\bar{u}_c}^2dx\\
&=\int_{\R^3}(p-3)|\nabla u_\infty|^2+ (p-2)m\mu u_\infty^2dx\\
&\qquad +\lim_{c\rightarrow \infty}\int_{\R^3}(p-3)|\nabla (\bar{u}_c-u_\infty)|^2+(p-2)m\mu (\bar{u}_c-u_\infty)^2dx\\
&=\frac{5p-12}{2}E_\infty+\lim_{c\rightarrow \infty}\int_{\R^3}(p-3)|\nabla (\bar{u}_c-u_\infty)|^2+(p-2)m\mu (\bar{u}_c-u_\infty)^2dx.
\end{align*}
From this, we deduce that $ \bar{u}_c\rightarrow u_\infty$ in $H^1$ as $c\rightarrow \infty$, up to a subsequence. This completes the proof. 
\end{proof}

\begin{proof}[\bf Proof of Theorem \ref{mthm1}]
It is sufficient to show $H^2$ convergence of $\bar{u}_c$ to $u_\infty$. We may rewrite $\bar{u}_c$ as $u_c$.
We note that, by Lemma \ref{aq1} and \cite[Theorem 4.1]{HL}, for  $u\in H^1$,
\[
\sup_{x\in\Omega} |\Phi_u(x)|\le C_1\|u\|_{H^1}^2 \ \ \mbox{ and }\ \ 
 \|  |u|^{p-2} \|_{L^{\frac{6}{p-2}}(\Omega)} =\|u\|_{L^{{6}}(\Omega)}^{p-2}\le C_2\|u\|_{H^1}^{p-2},
\] 
where  $\Omega$ is bounded domain in $\R^3$, and $C_1$ and $C_2$ are   positive constants independent of $u$ and $\Omega$. Then, since $\{\|u_{c}\|_{H^1}\}_{c}$ is bounded, we see that $\{\|u_{c}\|_{L^\infty}\}_{c}$ is bounded (see \cite[Theorem 4.1]{HL}).

Since $u_{\infty}$ and $u_{c}$ are   solutions   of \eqref{a1} and \eqref{sta1} respectively, we have
\begin{equation}\label{imi}
\begin{aligned}
-\Delta(u_{c}-u_{\infty})&=  -2m\mu(u_{c}-u_{\infty})+\Big(\frac{\mu}{c}\Big)^2u_{c}+\Big(\frac{q}{c}\Big)^2u_{c} \Phi_{u_{c}}^2-2q\frac{\mu}{c^2} u_{c}\Phi_{u_{c}}\\
&\qquad +2qm(u_{c}\Phi_{u_{c}}-u_{\infty}\phi_{u_{\infty}})+|u_{c}|^{p-2}u_{c}-|u_{\infty}|^{p-2}u_{\infty}.
\end{aligned}
\end{equation}
We note that,  by Lemma \ref{iiop}, Lemma \ref{b2bb}, Lemma \ref{yyy1} and Proposition \ref{lema2},  
\begin{equation}\label{imi1}
\begin{aligned}
&\|u_{c}\Phi_{u_{c}}-u_{\infty}\phi_{u_{\infty}}\|_{L^2}\\
&=\|u_{c}(\Phi_{u_{c}}-\phi_{u_{c}})+(u_{c}-u_{\infty})\phi_{u_{c}}+(\phi_{u_{c}}-\phi_{u_{\infty}})u_{\infty}\|_{L^2}\\
&\le \|u_{c}\|_{L^3}\|\Phi_{u_{c}}-\phi_{u_{c}}\|_{L^6}+\| u_{c}-u_{\infty}\|_{L^3}\|\phi_{u_{c}}\|_{L^6}+\| \phi_{u_{c}}-\phi_{u_{\infty}}\|_{L^6}\|u_{\infty}\|_{L^3}\rightarrow 0 
\end{aligned}
\end{equation} 
as $c\rightarrow \infty$, and by the fact that $\{\|u_{c}\|_{L^\infty}\}_{c}$ is bounded,
\begin{equation}\label{imi2}
\big\||u_{c}|^{p-2}u_{c}-|u_{\infty}|^{p-2}u_{\infty}\big\|_{L^2}=(p-1)\big\||u_{\infty}+t(u_{c}-u_{\infty})|^{p-2}
(u_{c}- u_{\infty})\big\|_{L^2}\rightarrow 0
\end{equation}
as $c\rightarrow \infty$, where $t\in [0,1]$.
Thus, by \eqref{imi}-\eqref{imi2} and the Calder\'{o}n–Zygmund inequality, we have
\begin{align*}
\|u_{c}-u_{\infty}\|_{H^2(\R^3)}=\|-\Delta(u_{c}-u_{\infty})\|_{L^2}+o(1)=o(1)
\end{align*}as $c\rightarrow \infty$.
\end{proof}

\section{Nonrelativistic limit of two positive solutions for $2<p< 3$}\label{case23}
In this section, we will construct two radially symmetric positive solutions of NMKG for $2<p< 3$. We prove first  the existence of a radially symmetric positive solution $v_{c,q}$ of \eqref{sta1} satisfying
\[
 \lim_{c\to\infty}\|v_{c,q}-v_\infty\|_{H^1}= 0,
\]
where $v_\infty$ is a global minimizer of $I_\infty$.

We assume $2<p< 3$ and denote 
\[
e_\infty\equiv \inf_{u\in H_r^1}\tilde{I}_\infty(u),\ \ \mathcal{X} _r\equiv \{u\in H_r^1\  | \  \tilde{I}_\infty(u)=e_\infty \} 
\] 
and 
\[
N_d(\mathcal{X}_r)\equiv \{u\in H_r^1\ | \ \inf_{v\in \mathcal{X}_r}\|u-v\|_{H^1}\le d\},
\]
where   $d>0$ is a constant.  We remark that, by   \cite[Theorem 4.3, Corollary 4.4]{R}, $\mathcal{X}_r$ is bounded in $H^1$, and for small $q>0$, $e_\infty<0$ and $\mathcal{X}_r\neq\emptyset$. Moreover, since $e_\infty<0$ for small $q>0$, and  for $u\in \mathcal{X}_r$,
\begin{align*}
e_\infty=\tilde{I}_\infty(u)&=\frac12\int_{\R^3}|\nabla u|^2+2m\mu u^2-qmu^2\phi_u dx-\frac{1}{p}\int_{\R^3}(u)_+^pdx\\
&\ge \frac12\int_{\R^3}|\nabla u|^2+2m\mu u^2  dx-\frac{C_1}{p}\Big(\int_{\R^3}|\nabla u| ^2+u^2dx\Big)^{p/2},
\end{align*}
where $C_1>0$ is a constant independent of $u\in \mathcal{X}_r$, we see that there exists $\hat{q}_0>0$ such that for $0<q<\hat{q}_0$, $\mathcal{X}_r\neq\emptyset$ and
\begin{equation}\label{zem1}
\inf_{u\in \mathcal{X}_r}\|u\|_{H^1}>\hat{d}_0>0,
\end{equation}
where $\hat{d}_0$ is a positive constant. Taking $d\in (0,\frac{\hat{d}_0}{2}),$ we deduce that for $0<q<\hat{q}_0$,  $0\notin N_d(\mathcal{X}_r)$. For $d\in (0,\frac{\hat{d}_0}{2})$ and   $0<q<\hat{q}_0$, take $V_0\in \mathcal{X}_r$ and set 
$$\alpha_c=\inf_{u\in N_d(\mathcal{X}_r)}\tilde{I}_c(u) \ \mbox{ and }\ m_c=\tilde{I}_c(V_0).$$
 Clearly, we have $m_c\ge \alpha_c$.  
We try to find a critical point of $\tilde{I}_c$ in $N_d(\mathcal{X}_r)$.
\begin{prop}\label{llla1}
For $2<p< 3$, $0<q<\hat{q}_0$ and   $d\in (0,\frac{\hat{d}_0}{2})$,  we have
\[
\liminf_{c\rightarrow \infty}\alpha_c\ge e_\infty.
\]
\end{prop}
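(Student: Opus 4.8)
The plan is to compare $\tilde{I}_c$ with $\tilde{I}_\infty$ on the neighborhood $N_d(\mathcal{X}_r)$, exploiting that every element of $N_d(\mathcal{X}_r)$ is uniformly bounded in $H^1$ (since $\mathcal{X}_r$ is bounded in $H^1$ by \cite[Theorem 4.3]{R} and $d$ is fixed). First I would recall from the variational settings that
\[
\tilde{I}_c(u) = \tilde{I}_\infty(u) - \frac{1}{2c^2}\int_{\R^3}\mu^2 u^2 - q\mu u^2 \Phi_u\, dx - \frac12 qm \int_{\R^3} u^2(\Phi_u - \phi_u)\, dx,
\]
which is the same decomposition already used in the proof of Proposition \ref{path1}. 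Denote the last two terms collectively by $\mathcal{R}_c(u)$.

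The key step is to show that $\sup_{u \in N_d(\mathcal{X}_r)} |\mathcal{R}_c(u)| \to 0$ as $c \to \infty$. For the first piece this is immediate from the $H^1$ bound and the $1/c^2$ prefactor, together with the uniform bound $-\frac{c^2}{q}(m - \frac{\mu}{c^2}) \le \Phi_u \le 0$ combined with interpolation (as in Lemma \ref{iiop}) to control $\int u^2 |\Phi_u|$. For the second piece, $\Phi_u - \phi_u$ solves an equation of the type handled in Lemma \ref{yyy1}, which gives $\|\Phi_u - \phi_u\|_{L^6} = o(1)$ uniformly over $H^1$-bounded sets as $c\to\infty$; pairing this with $\|u^2\|_{L^{6/5}} \le C\|u\|_{H^1}^2$ via Hölder yields the uniform smallness. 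Hence for any $\varepsilon > 0$ and $c$ large, $\tilde{I}_c(u) \ge \tilde{I}_\infty(u) - \varepsilon$ for all $u \in N_d(\mathcal{X}_r)$.

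Taking the infimum over $u \in N_d(\mathcal{X}_r)$ then gives $\alpha_c \ge \inf_{u \in N_d(\mathcal{X}_r)} \tilde{I}_\infty(u) - \varepsilon$. Since $N_d(\mathcal{X}_r) \subset H_r^1$ and $e_\infty = \inf_{u \in H_r^1} \tilde{I}_\infty(u)$, we have $\inf_{u \in N_d(\mathcal{X}_r)} \tilde{I}_\infty(u) \ge e_\infty$, so $\alpha_c \ge e_\infty - \varepsilon$ for all large $c$. Letting $c \to \infty$ and then $\varepsilon \to 0$ yields $\liminf_{c\to\infty} \alpha_c \ge e_\infty$.

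I expect the only genuinely delicate point is the uniformity (over $N_d(\mathcal{X}_r)$, not just over a fixed sequence) in the estimate $|\mathcal{R}_c(u)| \to 0$; but this reduces cleanly to the appendix lemmas (Lemma \ref{yyy1}, Lemma \ref{iiop}), all of whose constants depend only on the $H^1$-norm, which is uniformly controlled on $N_d(\mathcal{X}_r)$. Everything else is bookkeeping.
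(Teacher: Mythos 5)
Your proposal is correct, but it takes a different (and more direct) route than the paper. The paper's proof proceeds by compactness: it takes a minimizer $v_c$ of $\tilde{I}_c$ over $N_d(\mathcal{X}_r)$ (which exists since $\mathcal{X}_r$ is bounded in $H^1$), uses the boundedness of $\{v_c\}$ in $H^1_r$ to extract a weak $H^1$ limit $v\in N_d(\mathcal{X}_r)$ with strong $L^s$ convergence ($2<s<6$) via the radial embedding, and then passes to the limit in $\tilde{I}_c(v_c)$ using weak lower semicontinuity together with Lemma \ref{yyy1}, concluding $\liminf_c\alpha_c\ge\tilde{I}_\infty(v)\ge e_\infty$. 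You instead prove the uniform comparison $\sup_{u\in N_d(\mathcal{X}_r)}|\tilde{I}_c(u)-\tilde{I}_\infty(u)|\to0$ on the $H^1$-bounded set $N_d(\mathcal{X}_r)$ (the nonlinear term cancels in the difference, and the remainder is controlled by the $1/c^2$ prefactors together with Lemmas \ref{aq1} and \ref{yyy1}, whose constants depend only on $\|u\|_{H^1}$), and then simply take infima, using $N_d(\mathcal{X}_r)\subset H^1_r$ and $e_\infty=\inf_{H^1_r}\tilde{I}_\infty$. This avoids the existence of a minimizer, the extraction of subsequences, and the radial compact embedding altogether, and is in fact the same perturbation device the paper employs in Propositions \ref{path1} and \ref{uuu1}; the paper's compactness argument, on the other hand, is the template it reuses for the Palais--Smale-type statements (Propositions \ref{rannul1} and \ref{rannul2}), where a mere functional comparison would not suffice. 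One small correction to your write-up: for the term $\frac{q\mu}{2c^2}\int u^2|\Phi_u|\,dx$, the pointwise bound $-\frac{c^2}{q}\bigl(m-\frac{\mu}{c^2}\bigr)\le\Phi_u\le0$ by itself only yields an $O(1)$ estimate (the $c^2$ cancels the prefactor); what makes this term vanish uniformly is the Sobolev-type bound $\|\Phi_u\|_{L^6}\le C\|\Phi_u\|_{D^{1,2}}\le C\|u\|_{H^1}^2$ of Lemma \ref{aq1} (the analogue for $\Phi_u$ of Lemma \ref{iiop}), which gives $\bigl|\int u^2\Phi_u\,dx\bigr|\le C\|u\|_{H^1}^4$ and hence an $O(1/c^2)$ contribution. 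With that reference fixed, your argument is complete.
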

\begin{proof}
It is standard to show that there exists $v_c\in N_d(\mathcal{X}_r)$ such that
\[
\alpha_c=\tilde{I}_c(v_c),
\] because  $\mathcal{X}_r$ is bounded in $H^1$.   Since $v_c$ is bounded in $H^1_r$ uniformly in $c$, we assume that $v_c$ converges to $v$ in $L^s$ and weakly in $H^1$ as $c\rightarrow \infty$, where $s\in (2,6)$ and $v\in N_d(\mathcal{X}_r)$. Then, by Lemma \ref{yyy1}, we have
\begin{align*}
\liminf_{c\rightarrow \infty}\alpha_c&=\liminf_{c\rightarrow \infty}\tilde{I}_c(v_c)\\
&= \liminf_{c\rightarrow \infty}\Big[\frac12\int_{\R^3}|\nabla v_c|^2+\Big(2m\mu-\frac{\mu^2}{c^2}\Big)v_c^2-q\Big(m-\frac{\mu}{c^2}\Big) v_c^2 \Phi_{v_c} dx-\frac{1}{p}\int_{\R^3}(v_c)_+^pdx\Big]\\
&\ge\frac12\int_{\R^3}|\nabla v|^2+2m\mu v^2-qmv^2\phi_v dx-\frac{1}{p}\int_{\R^3}(v)_+^pdx =\tilde{I}_\infty(v)\ge e_\infty.
\end{align*}
\end{proof}

\begin{prop}\label{uuu1}
For $2<p< 3$ and $0<q<\hat{q}_0$,  we have
\[
m_c\rightarrow e_\infty
\]uniformly in $q$ as $c\rightarrow \infty$.
\end{prop}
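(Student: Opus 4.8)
The plan is to recognize that $m_c-e_\infty$ is exactly $\tilde I_c(V_0)-\tilde I_\infty(V_0)$, since $V_0\in\mathcal X_r$ forces $\tilde I_\infty(V_0)=e_\infty$, and then to estimate this difference term by term. Writing out the two functionals gives
\[
\tilde I_c(V_0)-\tilde I_\infty(V_0)=-\frac{\mu^2}{2c^2}\int_{\R^3}V_0^2\,dx+\frac{qm}{2}\int_{\R^3}V_0^2\bigl(\phi_{V_0}-\Phi_{V_0}\bigr)\,dx+\frac{q\mu}{2c^2}\int_{\R^3}V_0^2\Phi_{V_0}\,dx,
\]
where $\Phi_{V_0}\in D^{1,2}$ solves \eqref{sta3} with $u=V_0$ and $\phi_{V_0}\in D^{1,2}$ solves \eqref{sta2} with $u=V_0$. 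Thus everything reduces to showing that this right-hand side tends to $0$ as $c\to\infty$, with a rate controlled only by $q$ and the $H^1$-bound of $\mathcal X_r$.

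For the upper estimate $m_c\le e_\infty$ (valid for every $c>\sqrt{\mu/m}$ and every $q\in(0,\hat q_0)$) I would show all three terms are nonpositive. The first is obvious, and the third is $\le 0$ because $\Phi_{V_0}\le 0$. The second is $\le 0$ because $\phi_{V_0}\le\Phi_{V_0}\le 0$: subtracting \eqref{sta2} from \eqref{sta3} (both with $u=V_0$), the function $w:=\Phi_{V_0}-\phi_{V_0}$ satisfies
\[
-\Delta w=\frac{q}{c^2}V_0^2\bigl(\mu-q\Phi_{V_0}\bigr)\ge 0,
\]
since $\Phi_{V_0}\le 0$ implies $\mu-q\Phi_{V_0}\ge\mu>0$; hence $w=\frac{1}{4\pi|\cdot|}\ast\bigl[\frac{q}{c^2}V_0^2(\mu-q\Phi_{V_0})\bigr]\ge 0$ by positivity of the Newtonian kernel. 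Combined with the trivial bound $m_c=\tilde I_c(V_0)\ge\alpha_c$ (because $V_0\in\mathcal X_r\subset N_d(\mathcal X_r)$) and Proposition \ref{llla1}, this already gives $\liminf_{c\to\infty}m_c\ge e_\infty\ge\limsup_{c\to\infty}m_c$.

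To obtain the quantitative (and $q$-uniform) statement I would instead bound each of the three terms above directly; these are precisely the quantities handled by Lemma \ref{yyy1}. The first and third terms are $O(c^{-2})$ using the $H^1$-bound of $\mathcal X_r$ together with the a priori estimate $\|\Phi_{V_0}\|_{D^{1,2}}\le Cq\|V_0\|_{H^1}^2$ (from testing \eqref{sta3} against $\Phi_{V_0}$). The crux is the middle term: from the displayed equation for $w=\Phi_{V_0}-\phi_{V_0}$ one gets $\|w\|_{D^{1,2}}\le\frac{Cq}{c^2}\|V_0^2(\mu-q\Phi_{V_0})\|_{L^{6/5}}$, and here the right side must be controlled via $\|V_0^2\Phi_{V_0}\|_{L^{6/5}}\le\|V_0\|_{L^3}^2\|\Phi_{V_0}\|_{L^6}\le Cq\|V_0\|_{H^1}^4$ rather than by the lossy pointwise bound $q|\Phi_{V_0}|\le c^2m-\mu$, which would cancel the $c^{-2}$ gain. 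This yields $\int_{\R^3}V_0^2 w\,dx=O(c^{-2})$ and hence $0\le e_\infty-m_c=O(c^{-2})$ with the implied constant depending only on $q$ and $\sup_{\mathcal X_r}\|\cdot\|_{H^1}$, so that $m_c\to e_\infty$ uniformly in $q$. The one genuine obstacle is exactly this middle term: extracting the $c^{-2}$ decay of $\Phi_{V_0}-\phi_{V_0}$ while keeping the estimate of its source term in terms of $\|V_0\|_{H^1}$ only, so that the smallness survives the presence of the zeroth-order coupling $\frac{q^2}{c^2}V_0^2\Phi_{V_0}$.
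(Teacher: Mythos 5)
Your argument is correct and is essentially the paper's proof: the paper obtains $m_c=\tilde I_c(V_0)=\tilde I_\infty(V_0)+o(1)=e_\infty+o(1)$ by invoking Lemma \ref{yyy1} with $v=w=V_0$ to control $\Phi_{V_0}-\phi_{V_0}$ in $D^{1,2}$ by $O(c^{-2})$, which is exactly the estimate you re-derive from the equation for $w=\Phi_{V_0}-\phi_{V_0}$ together with the bound $\|\Phi_{V_0}\|_{D^{1,2}}\le Cq\|V_0\|_{H^1}^2$ of Lemma \ref{aq1}, and the remaining $c^{-2}$ terms are handled identically. Your additional comparison argument giving $\phi_{V_0}\le\Phi_{V_0}\le 0$ (hence the one-sided inequality $m_c\le e_\infty$) is a correct extra observation not present in the paper, but it is not needed once the quantitative bound is in place.
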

\begin{proof}
By Lemma \ref{yyy1},
\begin{align*}
\tilde{I}_c(V_0)&=\frac12\int_{\R^3}|\nabla V_0|^2+\Big(2m\mu-\frac{\mu^2}{c^2}\Big)V_0^2-q\Big(m-\frac{\mu}{c^2}\Big) V_0^2 \Phi_{V_0} dx-\frac{1}{p}\int_{\R^3}(V_0)_+^pdx\\
&=\frac12\int_{\R^3}|\nabla V_0|^2+2m\mu V_0^2-qmV_0^2\phi_{V_0} dx-\frac{1}{p}\int_{\R^3}(V_0)_+^pdx+o(1)\\
&=\tilde{I}_\infty(V_0)+o(1)=e_\infty+o(1)
\end{align*}
as $c\rightarrow \infty$.
\end{proof}

\begin{prop}\label{rannul1}
Let $2<p< 3$, $0<q<\hat{q}_0$ and   $d\in (0,\frac{\hat{d}_0}{2})$. For large $c>0$    and for any $d^\prime\in (0,d)$, there exists $\nu_0\equiv \nu_0(d,d^\prime)>0$ independent of $c>0$ such that 
\[
\inf\{\|\tilde{I}_c^\prime(u)\|_{H^{-1}}\ |\ \tilde{I}_c(u)\le m_c, u\in N_d(\mathcal{X}_r)\setminus N_{d^\prime}(\mathcal{X}_r)\}\ge \nu_0>0.
\]
\end{prop}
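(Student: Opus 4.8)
The plan is to argue by contradiction, mirroring the structure of the proof of Proposition~\ref{annul1} but now in the radial setting and with the energy ceiling $m_c$ (which converges to $e_\infty$) playing the role that $\hat e_c$ played before. Suppose the conclusion fails: then there is a sequence $c_i\to\infty$ and functions $u_{c_i}\in N_d(\mathcal X_r)\setminus N_{d'}(\mathcal X_r)$ with $\tilde I_{c_i}(u_{c_i})\le m_{c_i}$ and $\|\tilde I_{c_i}'(u_{c_i})\|_{H^{-1}}\to 0$. Write $c$ for $c_i$. Since $u_c\in N_d(\mathcal X_r)$ and $\mathcal X_r$ is bounded in $H^1$, the sequence $\{u_c\}$ is bounded in $H^1_r$; passing to a subsequence, $u_c\rightharpoonup u$ weakly in $H^1_r$, strongly in $L^s_{loc}$ for $s\in(2,6)$, and \emph{strongly} in $L^s(\R^3)$ for $s\in(2,6)$ by the compact embedding $H^1_r\hookrightarrow L^s$. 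This is the crucial simplification coming from radial symmetry: there is no loss of mass to infinity, so none of the concentration/splitting analysis (Steps 1--3 of Proposition~\ref{annul1}, with the cut-offs $\tilde\eta_c$) is needed.

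Next I would identify the weak limit. Using the expansion $\tilde I_c'(u_c)\psi=\tilde I_\infty'(u_c)\psi+o(1)$ for $\psi\in C_0^\infty(\R^3)$ — exactly as in \eqref{bqqwq1}, which only uses Lemma~\ref{yyy1} and the $H^1$-boundedness — together with the convergence $\phi_{u_c}\rightharpoonup\phi_u$ in $D^{1,2}$ from \eqref{a131} and the strong $L^s$ convergence handling the $u_+^{p-1}$ term, one gets $\tilde I_\infty'(u)=0$. I would then show $u\not\equiv 0$: from $\tilde I_c(u_c)\le m_c=e_\infty+o(1)<0$ (for small $q$) and Lemma~\ref{yyy1}, one has $\tilde I_\infty(u_c)\le e_\infty+o(1)$; if $u\equiv 0$ the strong $L^p$ convergence forces $\int (u_c)_+^p\to 0$, whence $\tilde I_\infty(u_c)=\tfrac12\|u_c\|_{H^1}^2-\tfrac12\mu^2c^{-2}\|u_c\|_{L^2}^2+\cdots\ge o(1)$ (the nonlocal term being nonnegative up to $o(1)$), contradicting $\tilde I_\infty(u_c)<e_\infty/2<0$. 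So $u\not\equiv0$, and being a critical point of $\tilde I_\infty$ it is (by the maximum-principle argument already given) a positive radial solution; moreover from $\tilde I_\infty(u)\le\liminf\tilde I_\infty(u_c)\le e_\infty$ and the definition of $e_\infty$ as the infimum over $H^1_r$, in fact $\tilde I_\infty(u)=e_\infty$, i.e.\ $u\in\mathcal X_r$.

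To finish I must upgrade the weak convergence $u_c\rightharpoonup u$ to \emph{strong} $H^1$ convergence, which then contradicts $u_c\notin N_{d'}(\mathcal X_r)$ since $u\in\mathcal X_r$. For this I would test $\tilde I_c'(u_c)-\tilde I_\infty'(u)$ against $u_c-u$: the difference tends to $0$ because $\|\tilde I_c'(u_c)\|_{H^{-1}}\to0$, $\{u_c-u\}$ is bounded, and $\tilde I_c'(u_c)(u_c-u)-\tilde I_\infty'(u_c)(u_c-u)=o(1)$ by Lemma~\ref{yyy1}; expanding, the $u_+^{p-1}$ terms converge by strong $L^p$ convergence, the nonlocal terms converge using Lemma~\ref{iiop}/Lemma~\ref{aq1} together with $\phi_{u_c}\to\phi_u$ in $L^s_{loc}$ and uniform bounds, and what remains is $\int |\nabla(u_c-u)|^2+(2m\mu-\mu^2/c^2)(u_c-u)^2\,dx\to 0$, giving $u_c\to u$ in $H^1$. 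The main obstacle I anticipate is the bookkeeping in this last testing step — one must be careful that the nonlocal terms involving $\Phi_{u_c}^2$ and $\Phi_{u_c}$ genuinely pass to the limit (these are the terms estimated in \eqref{1qqp3}--\eqref{1qqp4}, but here used globally rather than on an exterior domain, which is fine because strong $L^s$ convergence holds on all of $\R^3$), and that the coefficient $2m\mu-\mu^2/c^2$ stays bounded below by $m\mu>0$ for $c$ large. Once strong convergence is in hand, the contradiction with $u_c\in N_d(\mathcal X_r)\setminus N_{d'}(\mathcal X_r)$ is immediate, so no such sequence exists and the claimed uniform lower bound $\nu_0>0$ holds.
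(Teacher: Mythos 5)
Your proposal is correct and takes essentially the same route as the paper's proof: argue by contradiction, use the compact embedding $H^1_r\hookrightarrow L^s$ for $s\in(2,6)$ together with Lemma \ref{yyy1} and Proposition \ref{uuu1} to conclude that the weak limit $u$ satisfies $\tilde{I}_\infty(u)=e_\infty$ (hence $u\in\mathcal{X}_r$), and then upgrade to strong $H^1$ convergence by testing the derivative, which contradicts $u_c\notin N_{d'}(\mathcal{X}_r)$. The only cosmetic difference is that the paper tests $\tilde{I}_c'(u_c)$ against $u$ and against $u_c$ separately rather than against $u_c-u$.
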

\begin{proof}
Let $\{c_i\}_{i=1}^\infty$ be such that $\lim_{i\rightarrow \infty}c_i=\infty$. It suffices to show that  if
\begin{align*}
u_{c_i}\in N_{d}(\mathcal{X}_r),\ \ \tilde{I}_{c_i}(u_{c_i})\le m_{c_i},\ \mbox{ and }\ \|\tilde{I}^\prime_{c_i}(u_{c_i})\|_{H^{-1}}\rightarrow 0 
\end{align*}
as $i\rightarrow \infty$, then 
\[
\inf_{v\in \mathcal{X}_r}\|u_{c_i}-v\|_{H^1}\rightarrow 0 \mbox{ as } i\rightarrow \infty.
\]
 For the sake of simplicity of notation, we write $c$ for $c_i$. Since $\{u_c\}\subset H_r^1$ is bounded in $H^1$, we see that $u_c$ converges to $u$ in $L^s$ and weakly in $H^1$ as $c\rightarrow \infty$, up to a subsequence, where $s\in(2,6)$. Then, by Lemma \ref{yyy1} and Proposition \ref{uuu1}, we have
\begin{align*}
e_\infty&=\liminf_{c\rightarrow \infty} m_c \ge \liminf_{c\rightarrow \infty}\tilde{I}_c(u_c)\\
&= \liminf_{c\rightarrow \infty}\Big[\frac12\int_{\R^3}|\nabla u_c|^2+\Big(2m\mu-\frac{\mu^2}{c^2}\Big)u_c^2-q\Big(m-\frac{\mu}{c^2}\Big) u_c^2 \Phi_{u_c} dx-\frac{1}{p}\int_{\R^3}(u_c)_+^pdx\Big]\\
&\ge \frac12\int_{\R^3}|\nabla u|^2+2m\mu u^2-qmu^2\phi_u dx-\frac{1}{p}\int_{\R^3}(u)_+^pdx =\tilde{I}_\infty(u),
\end{align*}
which implies that $e_\infty=\tilde{I}_\infty(u)$. 

We claim that $u_c\rightarrow u$ in $H^1$. Indeed, by Lemma \ref{yyy1} and the fact that $\|\tilde{I}^\prime_c(u_c)\|_{H^{-1}}\rightarrow 0$ as $c\rightarrow \infty$, we see that
\begin{equation}\label{uup1a}
\begin{aligned}
o(1)&=\tilde{I}_c^\prime (u_c)u\\
&=\int_{\R^3}\nabla u_c\cdot \nabla u+\Big(2m\mu-\frac{\mu^2}{c^2}\Big)u_c u-\Big(\frac{q}{c}\Big)^2 u_cu \Phi_{u_c}^2-2q\Big(m-\frac{\mu}{c^2}\Big)   u_c  u\Phi_{u_c}-(u_c)_+^{p-1}  udx\\
&=\int_{\R^3} |\nabla u|^2+2m\mu u^2-2qm u^2\phi_u-(u)_+^{p} dx+o(1)
\end{aligned}
\end{equation}
as $c\rightarrow \infty$, and 
\begin{equation}\label{uup2a}
\begin{aligned}
o(1)&=\tilde{I}_c^\prime (u_c)u_c\\
&=\int_{\R^3}|\nabla u_c|^2+\Big(2m\mu-\frac{\mu^2}{c^2}\Big)u_c^2-\Big(\frac{q}{c}\Big)^2 u_c^2 \Phi_{u_c}^2-2q\Big(m-\frac{\mu}{c^2}\Big) u_c^2\Phi_{u_c}-(u_c)_+^{p} dx\\
&=\int_{\R^3}|\nabla u_c|^2+2m\mu u_c^2+2qmu^2\phi_u-(u)_+^pdx+o(1)
\end{aligned}
\end{equation}
as $c\rightarrow \infty$. Thus, by \eqref{uup1a} and \eqref{uup2a}, we have $u_c\rightarrow u$ in $H^1$.

\end{proof}

\begin{prop}\label{rannul2} 
Let $2<p< 3$, $0<q<\hat{q}_0$ and $d\in (0,\frac{\hat{d}_0}{2}).$ For a fixed $c\in(\sqrt{\frac{\mu}{m}},\infty)$, suppose that for some $b\in \R$, there exists a sequence $\{u_j\}\subset H_r^1$ satisfying
\begin{align*}
&u_j\in N_d(\mathcal{X}_r),\\
&\|\tilde{I}_c^\prime(u_j)\|_{H^{-1}}\rightarrow 0,\\
&\tilde{I}_c(u_j)\rightarrow b \mbox{ as } j\rightarrow \infty.
\end{align*} 
Then    $b$ is a critical value of $\tilde{I}_c$, and the sequence $\{u_j\}_{j=1}^\infty\subset H_r^1$ has a strongly convergent subsequence in $H^1$.
\end{prop}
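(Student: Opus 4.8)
The plan is to run a standard compactness argument, exploiting that radial symmetry makes the situation much softer than in the non-radial counterpart, Proposition~\ref{annul2}: since $H^1_r(\R^3) \hookrightarrow L^s(\R^3)$ is compact for $2 < s < 6$, neither cut-off functions nor a vanishing lemma are needed, and no translations appear.

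First I would note that $\{u_j\}$ is bounded in $H^1_r$, because $u_j \in N_d(\mathcal{X}_r)$ and $\mathcal{X}_r$ is bounded in $H^1$ (\cite[Corollary 4.4]{R}). Passing to a subsequence, $u_j \rightharpoonup u_0$ weakly in $H^1_r$ and $u_j \to u_0$ strongly in $L^s(\R^3)$ for every $s \in (2,6)$. As $c$ is fixed, $\|\Phi_{u_j}\|_{D^{1,2}} \le C q (m - \tfrac{\mu}{c^2}) \|u_j\|_{H^1}^2 \le C$, so along a further subsequence $\Phi_{u_j} \rightharpoonup \Phi_0$ in $D^{1,2}$ and $\Phi_{u_j} \to \Phi_0$ in $L^s_{\mathrm{loc}}(\R^3)$; passing to the limit in the weak form of $-\Delta\Phi_{u_j} + (\tfrac{q}{c})^2 u_j^2 \Phi_{u_j} = -q(m - \tfrac{\mu}{c^2}) u_j^2$ tested against $C_0^\infty$ functions, and invoking Lax--Milgram uniqueness, one identifies $\Phi_0 = \Phi_{u_0}$. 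With these convergences, testing $\tilde{I}_c'(u_j)$ against an arbitrary $\psi \in C_0^\infty(\R^3)$ and letting $j \to \infty$ (the computation being as in \eqref{bqqwq1}) gives $\tilde{I}_c'(u_0)\psi = 0$, so $u_0$ is a critical point of $\tilde{I}_c$.

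The heart of the proof is upgrading weak to strong convergence. Since $u_j - u_0 \rightharpoonup 0$ in $H^1$ and $\|\tilde{I}_c'(u_j)\|_{H^{-1}} \to 0$, we have $\tilde{I}_c'(u_j)(u_j - u_0) = o(1)$. Expanding and using weak convergence to kill the cross terms $\int_{\R^3}\nabla u_0\cdot\nabla(u_j - u_0) + (2m\mu - \tfrac{\mu^2}{c^2})u_0(u_j - u_0)\,dx$, one is left with
\[
\int_{\R^3} |\nabla(u_j - u_0)|^2 + \Big(2m\mu - \frac{\mu^2}{c^2}\Big)(u_j - u_0)^2 \, dx = R_j + o(1),
\]
where $R_j$ is the sum of $(\tfrac{q}{c})^2\!\int u_j(u_j - u_0)\Phi_{u_j}^2\,dx$, of $2q(m - \tfrac{\mu}{c^2})\!\int u_j(u_j - u_0)\Phi_{u_j}\,dx$, and of $\int (u_j)_+^{p-1}(u_j - u_0)\,dx$. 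By Hölder these are bounded by $\|\Phi_{u_j}\|_{L^6}^2\|u_j\|_{L^3}\|u_j - u_0\|_{L^3}$, by $\|\Phi_{u_j}\|_{L^6}\|u_j\|_{L^{12/5}}\|u_j - u_0\|_{L^{12/5}}$, and by $\|u_j\|_{L^p}^{p-1}\|u_j - u_0\|_{L^p}$ respectively; since $\|\Phi_{u_j}\|_{L^6} \le C$ uniformly (Lemma~\ref{iiop}) and $u_j \to u_0$ in $L^s$ for $s = 3,\ \tfrac{12}{5},\ p$, all lying in $(2,6)$, we get $R_j \to 0$. Hence the left-hand side tends to $0$, and since $c > \sqrt{\mu/m}$ forces $2m\mu - \tfrac{\mu^2}{c^2} > m\mu > 0$, this is equivalent to $\|u_j - u_0\|_{H^1} \to 0$.

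Strong convergence then closes the argument: $b = \lim_j \tilde{I}_c(u_j) = \tilde{I}_c(u_0)$ and $\tilde{I}_c'(u_0) = \lim_j \tilde{I}_c'(u_j) = 0$, so $b$ is a critical value of $\tilde{I}_c$; moreover $N_d(\mathcal{X}_r)$ is closed and $0 \notin N_d(\mathcal{X}_r)$ (recall $d < \hat{d}_0/2$), so $u_0$ is in fact a nontrivial critical point. I do not anticipate a genuine obstacle: the only subtle point is that weak $H^1(\R^3)$ convergence does not upgrade to $L^2(\R^3)$ convergence, which is exactly why the remainder terms $R_j$ must be estimated through $L^s$-norms with $2 < s < 6$ --- where compactness is available --- rather than through $\|u_j - u_0\|_{L^2}$.
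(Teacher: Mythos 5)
Your proof is correct and follows essentially the same route as the paper: boundedness in $H^1_r$, the compact embedding $H^1_r\hookrightarrow L^s$ for $s\in(2,6)$ to get a nontrivial weak limit which is a critical point, and then strong convergence recovered from $\tilde{I}_c'(u_j)$ (you test against $u_j-u_0$, the paper against $u_j$ and $u$ separately and invokes Lemma \ref{aaq1}, but the mechanism is identical). The only cosmetic slip is citing Lemma \ref{iiop} for the uniform bound on $\|\Phi_{u_j}\|_{L^6}$, where Lemma \ref{aq1} is the relevant estimate, as you in fact used earlier in your argument.
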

\begin{proof}
Since $\{u_j\}\subset N_d(\mathcal{X}_r)$ is bounded in $H^1$, we see that $u_j$ converges to $u$ in $L^s$ and weakly in $H^1$ as $c\rightarrow \infty$, up to a subsequence, where $s\in(2,6)$. It is standard to show that $u$ is a critical point of $\tilde{I}_c$.

We claim that $u_j\rightarrow u$ in $H^1$. Indeed, by Lemma \ref{aaq1} and the fact that $\|\tilde{I}_c^\prime(u_j)\|_{H^{-1}}\rightarrow 0$ as $j\rightarrow \infty$, we have
\begin{align*}
o(1)&=\tilde{I}_c^\prime (u_j)u_j\\
&=\int_{\R^3}|\nabla u_j|^2+\Big(2m\mu-\frac{\mu^2}{c^2}\Big)u_j^2-\Big(\frac{q}{c}\Big)^2 u_j^2 \Phi_{u_j}^2-2q\Big(m-\frac{\mu}{c^2}\Big) u_j^2\Phi_{u_j}-|u_j|^{p} dx\\
&=\int_{\R^3}|\nabla u_j|^2+\Big(2m\mu-\frac{\mu^2}{c^2}\Big)u_j^2-\Big(\frac{q}{c}\Big)^2 u^2 \Phi_{u}^2-2q\Big(m-\frac{\mu}{c^2}\Big) u^2\Phi_{u}-|u|^{p} dx+o(1)
\end{align*}
as $j\rightarrow \infty$ and
\begin{align*}
 0=\tilde{I}_c^\prime(u)u=\int_{\R^3}|\nabla u|^2+\Big(2m\mu-\frac{\mu^2}{c^2}\Big)u^2-\Big(\frac{q}{c}\Big)^2 u^2 \Phi_u^2-2q\Big(m-\frac{\mu}{c^2}\Big) u^2\Phi_u-|u|^{p} dx.
\end{align*}
Thus, we deduce  that $u_j\rightarrow u$ in $H^1$ as $j\rightarrow \infty$.
\end{proof}
\begin{prop}\label{apro1}
Let $2<p< 3$, $0<q<\hat{q}_0$ and   $d\in (0,\frac{\hat{d}_0}{2})$. Then there exists $\hat{c}_0>0$  such that for $c>\hat{c}_0$, $\tilde{I}_c$ has a non-trivial critical point $u$ in $N_d(\mathcal{X}_r)$ with $\tilde{I}_c(u)\le m_c$.
\end{prop}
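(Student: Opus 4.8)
The plan is to run, in the minimization setting, the same deformation scheme that proves Proposition \ref{prop3} (see Proposition 7 in \cite{BJ}), with the mountain--pass geometry replaced by the obstacle geometry carried by the neighborhood $N_d(\mathcal{X}_r)$ of the set $\mathcal{X}_r$ of radial minimizers of $\tilde I_\infty$. Recall that $N_d(\mathcal{X}_r)$ is bounded in $H^1$ because $\mathcal{X}_r$ is, that $0\notin N_d(\mathcal{X}_r)$ for the admissible range of $q$ and $d$ by \eqref{zem1} and the discussion following it, and consequently that $\tilde I_c$ is bounded below on $N_d(\mathcal{X}_r)$, so $\alpha_c=\inf_{N_d(\mathcal X_r)}\tilde I_c$ is a finite number. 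The essential input is the squeeze $m_c-\alpha_c\to 0$ as $c\to\infty$, which follows from $\alpha_c\le m_c$, from Proposition \ref{uuu1} ($m_c\to e_\infty$), and from Proposition \ref{llla1} ($\liminf_{c\to\infty}\alpha_c\ge e_\infty$). Throughout I would work in $H^1_r$ and use the principle of symmetric criticality at the end.

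First I would argue by contradiction: assume $\tilde I_c'(u)\neq 0$ for every $u\in N_d(\mathcal X_r)$ with $\tilde I_c(u)\le m_c$. Then Proposition \ref{rannul2} forces $\sigma_c\coloneqq\inf\{\|\tilde I_c'(u)\|_{H^{-1}}: u\in N_d(\mathcal X_r),\ \tilde I_c(u)\le m_c\}>0$, for otherwise a minimizing sequence for this infimum would be a Palais--Smale sequence in $N_d(\mathcal X_r)$ at a level $\le m_c$, hence, by Proposition \ref{rannul2}, would subconverge to a critical point of $\tilde I_c$ with energy $\le m_c$ — contradicting the assumption. On the other hand, taking $d'=d/2$ in Proposition \ref{rannul1} gives, for all large $c$, a constant $\nu_0>0$ independent of $c$ with $\|\tilde I_c'(u)\|_{H^{-1}}\ge\nu_0$ for every $u\in N_d(\mathcal X_r)\setminus N_{d/2}(\mathcal X_r)$ with $\tilde I_c(u)\le m_c$.

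Next I would perform a normalized pseudo--gradient descent for $\tilde I_c|_{H^1_r}$ issued from a fixed $V_0\in\mathcal X_r$, so that $\tilde I_c(V_0)=m_c$: a locally Lipschitz flow $\sigma(t)$ in $H^1_r$ with $\|\sigma'(t)\|_{H^1}\le 1$ and $\frac{d}{dt}\tilde I_c(\sigma(t))\le-\tfrac12\|\tilde I_c'(\sigma(t))\|_{H^{-1}}$, defined as long as $\sigma(t)$ lies in $N_d(\mathcal X_r)$ (its energy then automatically stays $\le m_c$). The heart of the matter is the claim that, for $c$ large, $\sigma(t)$ can never leave $N_d(\mathcal X_r)$. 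Indeed, since $u\mapsto\mathrm{dist}_{H^1}(u,\mathcal X_r)$ is $1$--Lipschitz and $\|\sigma'\|\le 1$, before $\mathrm{dist}_{H^1}(\sigma(t),\mathcal X_r)$ can exceed $d$ it must travel from the value $d/2$ up to the value $d$, which takes time at least $d/2$, all of it spent in the annular region $N_d(\mathcal X_r)\setminus N_{d/2}(\mathcal X_r)$, where $\frac{d}{dt}\tilde I_c(\sigma)\le-\nu_0/2$; hence such a crossing costs at least $\nu_0 d/4$ of energy. As $\tilde I_c(\sigma(\cdot))$ is non-increasing, starts at $m_c$, and stays $\ge\alpha_c$ while $\sigma\in N_d(\mathcal X_r)$, this would force $\nu_0 d/4\le m_c-\alpha_c$, impossible for $c$ large by the squeeze above. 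Therefore the flow remains in $N_d(\mathcal X_r)$ for all $t\ge 0$, so $\frac{d}{dt}\tilde I_c(\sigma(t))\le-\sigma_c/2$ for all $t$, whence $\tilde I_c(\sigma(t))\to-\infty$, contradicting $\tilde I_c(\sigma(t))\ge\alpha_c>-\infty$. This contradiction produces a critical point $u\in N_d(\mathcal X_r)$ of $\tilde I_c|_{H^1_r}$ with $\tilde I_c(u)\le m_c$; it is non-trivial because $0\notin N_d(\mathcal X_r)$, and by symmetric criticality it is a critical point of $\tilde I_c$ on $H^1$. Taking $\hat c_0$ to be larger than $\sqrt{\mu/m}$ and than the finitely many thresholds on $c$ used above completes the argument.

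The only delicate ingredient is this quantitative estimate — crossing the annulus $N_d(\mathcal X_r)\setminus N_{d/2}(\mathcal X_r)$ costs a definite amount of energy — coupled with the squeeze $m_c-\alpha_c\to 0$; together they trap the descent flow in $N_d(\mathcal X_r)$, and this is exactly the mechanism that here replaces the mountain--pass comparison $\liminf_{c\to\infty} e_c\ge E_\infty$ used for Proposition \ref{prop3}. Everything else is routine: the pseudo--gradient flow on the radial subspace is standard, it is defined for all $t\ge 0$ since it stays in the bounded set $N_d(\mathcal X_r)$ on which $\|\tilde I_c'\|\ge\sigma_c>0$, and the localization of Palais--Smale sequences is already provided by Proposition \ref{rannul2}. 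In fact the whole proof can be compressed into a single appeal to the deformation lemma of \cite{BJ}, just as was done for Proposition \ref{prop3}.
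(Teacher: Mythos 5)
Your proposal is correct and follows essentially the same route as the paper: a contradiction-based deformation (descent) argument starting from $V_0\in\mathcal{X}_r$, using the $c$-independent gradient bound $\nu_0$ on an annular region from Proposition \ref{rannul1}, the bound $\hat{\sigma}_c>0$ obtained from Proposition \ref{rannul2} under the contradiction hypothesis, and the energy squeeze between $\alpha_c$ and $m_c$ from Propositions \ref{llla1} and \ref{uuu1}, exactly as in the Byeon--Jeanjean scheme \cite{BJ}. The paper merely packages the same mechanism with cutoff functions and a finite deformation time $T=3\epsilon_1/\hat{\sigma}_c$ (and the annulus $N_{\frac23 d}\setminus N_{\frac13 d}$ instead of $N_d\setminus N_{d/2}$), while you let the flow run indefinitely and conclude from $\tilde{I}_c\to-\infty$; these differences are cosmetic.
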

\begin{proof}
Assume that $2<p< 3$, $0<q<\hat{q}_0$ and   $d\in (0,\frac{\hat{d}_0}{2})$. Suppose $\tilde{I}_c^\prime(u)\neq 0$ for $u\in N_d(\mathcal{X}_r)$ with $\tilde{I}_c(u)\le m_c$.
By Proposition \ref{llla1}--\ref{rannul2}, we can  take a positive constant  $\hat{c}_0$  such that for $c>\hat{c}_0$ and for $0<q<\hat{q}_0$,
\begin{equation}\label{zzzi1}
\alpha_c\ge e_\infty-\epsilon_1,\ \  |m_c-e_\infty|\le \epsilon_1,
\end{equation}
\begin{equation}\label{zzzi2}
\|\tilde{I}_c^\prime(u)\|_{H^{-1}}\ge \nu_0
\end{equation}
for $u\in N_{\frac23 d}(\mathcal{X}_r)\setminus N_{\frac13 d}(\mathcal{X}_r)$ with $\tilde{I}_c(u)\le m_c$, and
\begin{equation}\label{zzzi3}
\|\tilde{I}_c^\prime(u)\|_{H^{-1}}\ge \hat{\sigma}_c
 \end{equation}for $u\in N_d(\mathcal{X}_r)$ with $\tilde{I}_c(u)\le m_c$, where $d\in (0,\frac{\hat{d}_0}{2}),$ $\epsilon_1\in(0,\frac{d\nu_0}{6})$, and $\hat{\sigma}_c>0$ is a constant depending on $c$. For $u\in N_d(\mathcal{X}_r)$ with $\tilde{I}_c(u)\le m_c$, we consider the following ODE:
\[
\begin{cases}
&\frac{d\eta}{d\tau}=-\varphi_1(\tilde{I}_c(\eta))\varphi_2(dist_{H^1}(\eta, \mathcal{X}_r ))\frac{\tilde{I}_c^\prime(\eta)}{\|\tilde{I}_c^\prime(\eta)\|_{H^{-1}}},\\
&\eta(0,u)=u,
\end{cases}
\]
where 
\[
dist_{H^1}(w, \mathcal{X}_r )=\inf\{\|w-v\|_{H^1}\ |\ v\in \mathcal{X}_r\}
\]
for $w\in H^1$, and $\varphi_1, \varphi_2:\R\rightarrow [0,1]$ are Lipschitz continuous functions such that
\begin{align*}
  &\varphi_1(\xi)=\begin{cases}
                   1 & \mbox{if } \xi\ge e_\infty- \epsilon_1, \\
                   0 & \mbox{if } \xi\le  e_\infty- 2\epsilon_1,
                 \end{cases} 
\ \ \ \  \ \ \varphi_2(\xi)=\begin{cases}
                   1 & \mbox{if } \xi\le \frac23 d, \\
                   0 & \mbox{if } \xi\ge d.
                 \end{cases}
\end{align*}
 Let $T= 3\epsilon_1/\hat{\sigma}_c$ and $V_0\in \mathcal{X}_r$. Since $\tilde{I}_c(\eta(\tau, V_0))\ge \alpha_c\ge e_\infty-\epsilon_1$ for $\tau\in[0,T]$, we deduce that there exists $t_0\in [0,T]$ such that 
\begin{equation}\label{yyq}
dist_{H^1}(\eta(t_0,V_0))=\frac23 d.
\end{equation}
Indeed, if $dist_{H^1}(\eta(\tau,V_0))<\frac23 d$ for $\tau\in [0,T]$, by \eqref{zzzi1} and \eqref{zzzi3},
\begin{align*}
\tilde{I}_c(\eta (T,V_0))&=\tilde{I}_c(V_0)+\int_0^T \frac{d}{d\tau}\tilde{I}_c(\eta(\tau, V_0))d\tau \le e_\infty+\epsilon_1- T\hat{\sigma}_c=e_\infty-2\epsilon_1,
\end{align*}
which is a contradiction. Assume that $t_0$ is the first time that satisfies \eqref{yyq}. Since $\|\frac{d}{d\tau}\eta\|_{H^1}\le 1$, we see that $t_0\ge \frac23 d$ and 
\[
\eta(\tau, V_0)\in N_{\frac23 d}(\mathcal{X}_r)\setminus N_{\frac13 d}(\mathcal{X}_r) \mbox{ for } \tau \in [t_0-\frac13 d, t_0].
\]
Then, by \eqref{zzzi1} and \eqref{zzzi2}, we have
\begin{align*}
\tilde{I}_c(\eta(T,V_0))&=\tilde{I}_c(V_0)+\int_0^T \frac{d}{d\tau}\tilde{I}_c(\eta(\tau, V_0))d\tau \le e_\infty +\epsilon_1+\int_{t_0-\frac13 d}^{t_0} \frac{d}{d\tau}\tilde{I}_c(\eta(\tau, V_0))d\tau\\
&=e_\infty +\epsilon_1-\frac13 d \nu_0<e_\infty-\epsilon_1,
\end{align*}
which is a contradiction.
\end{proof}
\begin{proof}[\bf Proof of Theorem \ref{mthm3}] Let $2<p<3$. By Proposition \ref{apro1} and the proof of Proposition \ref{rannul1}, we   prove the existence of a radially symmetric positive solution $v_{c,q}$ of \eqref{sta1} satisfying 
$$\limsup_{c\rightarrow \infty} \tilde{I}_c(v_{c,q})\le \inf_{u\in H^1_r}\tilde{I}_\infty(u).$$ 
By repeating the same procedure in the proof of Proposition \ref{rannul1}, we can prove  Theorem \ref{mthm3} (ii). 

On the other hand, it is known that the ground state solution $w_0$ of the equation
\begin{equation}\label{NLS}
-\Delta u+2m\mu-|u|^{p-2}u=0 \mbox{ in }\R^3
\end{equation} 
is positive, radially symmetric, up to a translation. It is also non-degenerate in the radial class, i.e.,
$\text{Ker} L_0 = \{0\}$, where $L_0:H_r^1\rightarrow H^{-1}_r$ is the linearized operator of \eqref{NLS} at $w_0$, given by $L_0(w)\equiv -\Delta w+2m\mu w-(p-1)|u_0|^{p-2}w.$

Exploiting the non-degeneracy of $w_0$, we see from the implicit function theorem that
there exists of a family of radially symmetric solutions $w_{\infty,q}$ of \eqref{a1} for small $q > 0$ such that $w_{\infty,q}\to w_0$ as $q\rightarrow 0$ in $H^1$. (We refer to \cite{R} for detail.)
As a consequence, one can easily see that $w_{\infty, q}$ is also non-degenerate in the radial class for any small fixed $q > 0$. 
Then one can once more invoke the implicit function theorem to find a family of nontrivial radial solutions $w_{c,q}$ of \eqref{sta1} for large value $c > 0$ and small $q > 0$,
which converges in $H^1$ to $w_{\infty,q}$ as $c\to\infty$. 
This proves  Theorem \ref{mthm3} (i).   
\end{proof}

\appendix 
\section{Basic estimates}
Here, we provide with several basic estimates, which are repeatedly invoked in the proofs of main theorems.

\begin{lem}\label{iiop}
Let $u\in H^1$. Then we have
\[
\|\phi_u\|_{D^{1,2}}\le Cqm \|u\|_{H^1}^2,
\]
where $C$ is a positive constant.
\end{lem}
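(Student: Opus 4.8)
The plan is to run the standard energy estimate for the Poisson equation defining $\phi_u$. First I would test the equation $-\Delta \phi_u = -qm u^2$ against $\phi_u$ itself (which is legitimate since $\phi_u\in D^{1,2}$ and $u^2\phi_u\in L^1$, as will be seen from the H\"older bound below), obtaining
\[
\|\phi_u\|_{D^{1,2}}^2=\int_{\R^3}|\nabla\phi_u|^2\,dx=-qm\int_{\R^3}u^2\phi_u\,dx.
\]

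Next I would estimate the right-hand side. By H\"older's inequality with exponents $6/5$ and $6$,
\[
\Big|\int_{\R^3}u^2\phi_u\,dx\Big|\le \|u^2\|_{L^{6/5}}\|\phi_u\|_{L^6}=\|u\|_{L^{12/5}}^2\|\phi_u\|_{L^6}.
\]
Then I invoke the Sobolev embedding $D^{1,2}(\R^3)\hookrightarrow L^6(\R^3)$ to bound $\|\phi_u\|_{L^6}\le C\|\phi_u\|_{D^{1,2}}$, and the embedding $H^1(\R^3)\hookrightarrow L^{12/5}(\R^3)$ (valid since $2\le 12/5\le 6$ in dimension three) to bound $\|u\|_{L^{12/5}}\le C\|u\|_{H^1}$. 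Combining these gives
\[
\|\phi_u\|_{D^{1,2}}^2\le C\,qm\,\|u\|_{H^1}^2\,\|\phi_u\|_{D^{1,2}},
\]
and dividing through by $\|\phi_u\|_{D^{1,2}}$ (the case $\phi_u=0$ being trivial) yields the claim.

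There is no real obstacle here; this is a routine application of testing with the solution together with two Sobolev embeddings. The only point requiring a moment's care is the bookkeeping of exponents — checking that $12/5$ lies in the admissible range $[2,6]$ for the $H^1$ embedding in $\R^3$ and that $6/5$ is the H\"older conjugate needed to pair $u^2$ with the $L^6$-controlled factor $\phi_u$ — which it does. One could alternatively absorb the constant differently, e.g. using $\|u^2\|_{L^{6/5}}\le C\|u\|_{H^1}^2$ directly from the Sobolev–Gagliardo–Nirenberg inequality, but the argument is identical.
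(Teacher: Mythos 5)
Your argument is correct and is essentially identical to the paper's proof: both test the Poisson equation against $\phi_u$, apply H\"older with exponents $6/5$ and $6$, use the Sobolev embeddings $D^{1,2}\hookrightarrow L^6$ and $H^1\hookrightarrow L^{12/5}$, and divide by $\|\phi_u\|_{D^{1,2}}$. Nothing to add.
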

\begin{proof}
Let $u\in H^1$. Since $\phi_u$ satisfies
\[
-\Delta\phi_u=-qm u^2 \mbox{ in }\R^3,
\]
we have 
\[
\int_{\R^3}|\nabla \phi_u|^2dx=-qm\int_{\R^3} u^2 \phi_udx\le qm \|\phi_u\|_{L^6}\|u^2\|_{L^{6/5}}\le Cqm\|\phi_u\|_{D^{1,2}}\|u\|_{H^1}^2,
\]
where $C$ is a positive constant.
This implies the result.
\end{proof}

\begin{lem}\label{aq1}
Let $u\in H^1$.  For $c>\sqrt{\frac{\mu}{m}}$, we have
\[
\|\Phi_{u}\|_{D^{1,2}}\le C q\Big(m-\frac{\mu }{c^2}\Big)  \|u\|_{H^1}^2,
\]
where $C$ is a positive constant.
\end{lem}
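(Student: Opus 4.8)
The plan is to mimic exactly the argument for Lemma \ref{iiop}, the only new feature being the presence of the lower-order term $\frac{q^2}{c^2}u^2\Phi$ in the defining equation for $\Phi_u$, which turns out to have a favorable sign. Recall that $\Phi_u\in D^{1,2}(\R^3)$ is the unique solution of
\[
-\Delta \Phi_u+\frac{q^2}{c^2}u^2\Phi_u=-q\Big(m-\frac{\mu}{c^2}\Big)u^2 \quad\text{in }\R^3,
\]
which makes sense as written since the hypothesis $c>\sqrt{\mu/m}$ ensures $m-\frac{\mu}{c^2}>0$.

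First I would test this equation against $\Phi_u$ itself and integrate over $\R^3$, obtaining
\[
\int_{\R^3}|\nabla \Phi_u|^2\,dx+\frac{q^2}{c^2}\int_{\R^3}u^2\Phi_u^2\,dx=-q\Big(m-\frac{\mu}{c^2}\Big)\int_{\R^3}u^2\Phi_u\,dx.
\]
Since $\frac{q^2}{c^2}\int_{\R^3}u^2\Phi_u^2\,dx\ge 0$, this term can simply be discarded from the left-hand side. For the right-hand side, I would apply the Cauchy--Schwarz/Hölder inequality with exponents $6/5$ and $6$, together with the Sobolev embedding $D^{1,2}(\R^3)\hookrightarrow L^6(\R^3)$ and the continuous embedding $H^1(\R^3)\hookrightarrow L^{12/5}(\R^3)$ (so that $\|u^2\|_{L^{6/5}}=\|u\|_{L^{12/5}}^2\le C\|u\|_{H^1}^2$), to get
\[
\Big|\int_{\R^3}u^2\Phi_u\,dx\Big|\le \|u^2\|_{L^{6/5}}\|\Phi_u\|_{L^6}\le C\|u\|_{H^1}^2\|\Phi_u\|_{D^{1,2}}.
\]
Combining the last two displays yields $\|\Phi_u\|_{D^{1,2}}^2\le Cq\big(m-\tfrac{\mu}{c^2}\big)\|u\|_{H^1}^2\|\Phi_u\|_{D^{1,2}}$, and dividing through by $\|\Phi_u\|_{D^{1,2}}$ (the case $\Phi_u=0$ being trivial) gives the claimed bound.

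I do not anticipate any real obstacle here: the estimate is essentially routine and the constant $C$ is, as in Lemma \ref{iiop}, just the product of the relevant Sobolev embedding constants, independent of $u$ and $c$. The only two points worth a word of care are (i) noting that the lower-order term $\frac{q^2}{c^2}u^2\Phi_u^2$ sits on the correct side with the correct sign so that it can be dropped without cost, and (ii) invoking the hypothesis $c>\sqrt{\mu/m}$ only to guarantee $m-\frac{\mu}{c^2}>0$, which keeps the right-hand coefficient positive and lets us state the bound with that factor rather than its absolute value.
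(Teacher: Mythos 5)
Your proof is correct and follows essentially the same route as the paper: test the equation for $\Phi_u$ against $\Phi_u$, discard the nonnegative term $\frac{q^2}{c^2}\int u^2\Phi_u^2\,dx$, and bound $\int u^2\Phi_u\,dx$ via H\"older with exponents $6/5,6$ together with the Sobolev embeddings $D^{1,2}\hookrightarrow L^6$ and $H^1\hookrightarrow L^{12/5}$. No gaps.
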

\begin{proof}
Let $u \in H^1$. Since $\Phi_{u}$ satisfies 
\[
-\Delta \Phi_{u}+ \Big(\frac{q}{c} \Big)^2u^2\Phi_{u}=-q\Big(m-\frac{\mu}{c^2}\Big)   u^2 \mbox{ in } \R^3,
\]
 and 
\begin{equation}\label{vv1}
\|u^2 \Phi_{u}\|_{L^1}\le\|\Phi_{u}\|_{L^6}\|u^2\|_{L^{6/5}} = \|\Phi_{u}\|_{L^6}\|u\|_{L^{12/5}}^2\le C \|\Phi_{u}\|_{D^{1,2}}\|u\|_{H^1}^2,
\end{equation} we have for   $c>\sqrt{\frac{\mu}{m}}$,
\begin{align*}
\|\Phi_{u}\|_{D^{1,2}}^2=\int_{\R^3}|\nabla \Phi_{u}|^2dx&\le -q\Big(m-\frac{\mu}{c^2}\Big)   \int_{\R^3}u^2 \Phi_{u}\\
&\le C q\Big(m-\frac{\mu }{c^2}\Big)  \|u\|_{H^1}^2 \|\Phi_{u}\|_{D^{1,2}},
\end{align*}
where $C$ is a positive constant.
This implies the result.
\end{proof}

\begin{lem}\label{b2bb}
Let $v, w\in  H^1$. Then we have
\[
\|\phi_v-\phi_w\|_{D^{1,2}}\le C\|v+w\|_{H^1}\|v-w\|_{H^1},
\]
where $C=C(q,m)$ is a positive constant.
\end{lem}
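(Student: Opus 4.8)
The plan is to follow exactly the template set by Lemma \ref{iiop} and Lemma \ref{aq1}: derive the PDE satisfied by the difference $\phi_v-\phi_w$, test it against itself, and close the estimate by Hölder and Sobolev. First I would recall that by definition $\phi_v,\phi_w\in D^{1,2}$ solve $-\Delta\phi_v=-qm v^2$ and $-\Delta\phi_w=-qm w^2$ in $\R^3$, so that $\phi_v-\phi_w$ is the $D^{1,2}$ solution of
\[
-\Delta(\phi_v-\phi_w)=-qm(v^2-w^2)=-qm(v+w)(v-w)\quad\text{in }\R^3.
\]

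Next I would test this equation with $\phi_v-\phi_w$ and integrate by parts to get
\[
\|\phi_v-\phi_w\|_{D^{1,2}}^2=-qm\int_{\R^3}(v+w)(v-w)(\phi_v-\phi_w)\,dx.
\]
To control the right-hand side I would apply Hölder's inequality with exponents $6$ and $6/5$, using $\phi_v-\phi_w\in L^6(\R^3)$ (valid by the embedding $D^{1,2}(\R^3)\hookrightarrow L^6(\R^3)$, as already used in Lemma \ref{iiop}), and then bound $\|(v+w)(v-w)\|_{L^{6/5}}\le\|v+w\|_{L^{12/5}}\|v-w\|_{L^{12/5}}$ by Hölder with the conjugate pair $2,2$. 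Since $12/5\in[2,6]$, the Sobolev embedding $H^1(\R^3)\hookrightarrow L^{12/5}(\R^3)$ gives $\|v+w\|_{L^{12/5}}\le C\|v+w\|_{H^1}$ and likewise $\|v-w\|_{L^{12/5}}\le C\|v-w\|_{H^1}$.

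Putting these estimates together yields
\[
\|\phi_v-\phi_w\|_{D^{1,2}}^2\le Cqm\,\|\phi_v-\phi_w\|_{D^{1,2}}\,\|v+w\|_{H^1}\,\|v-w\|_{H^1},
\]
and dividing by $\|\phi_v-\phi_w\|_{D^{1,2}}$ (the case $\phi_v=\phi_w$ being trivial) gives the claim with $C=C(q,m)$. I do not expect any genuine obstacle; the only points requiring a little care are the factorization $v^2-w^2=(v+w)(v-w)$ and the correct bookkeeping of the Hölder exponents, which is entirely parallel to the proofs of Lemmas \ref{iiop} and \ref{aq1}.
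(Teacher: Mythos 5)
Your proposal is correct and follows the same route as the paper: subtract the two Poisson equations to get $-\Delta(\phi_v-\phi_w)=-qm(v+w)(v-w)$, then test with $\phi_v-\phi_w$ and close via H\"older and the Sobolev embeddings, exactly as in Lemma \ref{iiop}. The paper simply states the conclusion without writing out the H\"older/Sobolev bookkeeping, which you have supplied correctly.
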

\begin{proof}
We note that for $v, w\in  H^1$,
\[
-\Delta(\phi_v-\phi_w)=-qm(v-w)(v+w) \ \mbox{ in }\ \R^3.
\]
Then we have
\[
\|\phi_v-\phi_w\|_{D^{1,2}}\le C\|v+w\|_{H^1}\|v-w\|_{H^1},
\]
where $C=C(q,m)$ is a positive constant.
\end{proof}

\begin{lem}\label{aaq1}
Let $v, w\in H^1$.Then for $c>\sqrt{\frac{\mu}{m}}$, we have 
\[
\|\Phi_v-\Phi_w\|_{D^{1,2}}\le  {C} (\|v\|_{H^1}^2 +1) \|v+w\|_{H^1}\|v-w\|_{L^3},
\]
where $C=C(q,m,\mu)$ is a positive constant.
\end{lem}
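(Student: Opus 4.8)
The plan is to estimate $\Phi_v - \Phi_w$ directly from the PDEs it satisfies, mimicking the argument in Lemma \ref{aq1} but keeping track of the difference structure. First I would subtract the two defining equations
\[
-\Delta \Phi_v + \Big(\frac{q}{c}\Big)^2 v^2 \Phi_v = -q\Big(m-\frac{\mu}{c^2}\Big) v^2, \qquad -\Delta \Phi_w + \Big(\frac{q}{c}\Big)^2 w^2 \Phi_w = -q\Big(m-\frac{\mu}{c^2}\Big) w^2,
\]
to obtain
\[
-\Delta(\Phi_v - \Phi_w) + \Big(\frac{q}{c}\Big)^2 v^2 (\Phi_v - \Phi_w) = -q\Big(m-\frac{\mu}{c^2}\Big)(v^2 - w^2) - \Big(\frac{q}{c}\Big)^2 (v^2 - w^2)\Phi_w.
\]
Then I would test this equation against $\Phi_v - \Phi_w$ itself and integrate over $\R^3$. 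The left-hand side yields $\|\Phi_v - \Phi_w\|_{D^{1,2}}^2$ plus the nonnegative term $(q/c)^2 \int v^2 (\Phi_v - \Phi_w)^2\,dx$, which we simply discard to get a clean lower bound $\|\Phi_v - \Phi_w\|_{D^{1,2}}^2 \le (\text{right-hand side integral})$.

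The remaining work is to bound the two right-hand side contributions. For the first, $\big|q(m - \mu/c^2)\int (v^2 - w^2)(\Phi_v - \Phi_w)\,dx\big|$: write $v^2 - w^2 = (v-w)(v+w)$, apply H\"older with exponents $\tfrac13 + \tfrac16 + \tfrac12 = 1$ to distribute $\|v-w\|_{L^3}$, $\|v+w\|_{L^6}$ and $\|\Phi_v - \Phi_w\|_{L^6}$, then use $\|v+w\|_{L^6} \le C\|v+w\|_{H^1}$ and the Sobolev embedding $\|\Phi_v - \Phi_w\|_{L^6} \le C\|\Phi_v - \Phi_w\|_{D^{1,2}}$, together with $m - \mu/c^2 < m$ for $c > \sqrt{\mu/m}$. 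For the second term, $\big|(q/c)^2 \int (v-w)(v+w)\Phi_w (\Phi_v - \Phi_w)\,dx\big|$, I would H\"older as $\|v-w\|_{L^3}\|v+w\|_{L^6}\|\Phi_w\|_{L^6}\|\Phi_v - \Phi_w\|_{L^6}$ (noting $\tfrac13+\tfrac16+\tfrac16+\tfrac13 = 1$), then invoke Lemma \ref{aq1} to bound $\|\Phi_w\|_{L^6} \le C\|\Phi_w\|_{D^{1,2}} \le C q(m - \mu/c^2)\|w\|_{H^1}^2$; since $c > \sqrt{\mu/m}$ we also have $(q/c)^2 = (q/c^2)\cdot q < (qm/\mu)\cdot q$, a constant depending only on $q,m,\mu$. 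Collecting both estimates, the common factor $\|\Phi_v - \Phi_w\|_{D^{1,2}}$ divides out, leaving
\[
\|\Phi_v - \Phi_w\|_{D^{1,2}} \le C(q,m,\mu)\big(1 + \|w\|_{H^1}^2\big)\|v+w\|_{H^1}\|v-w\|_{L^3}.
\]

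The one cosmetic mismatch with the stated conclusion is that this natural computation produces $\|w\|_{H^1}^2$ where the lemma writes $\|v\|_{H^1}^2$; by the symmetry of the hypotheses in $v$ and $w$ (one may equally test using $v^2 - w^2 = -(w^2 - v^2)$ and expand around $\Phi_v$ instead), the same bound holds with the roles swapped, so either form is valid. I do not anticipate a genuine obstacle here — the estimate is entirely routine elliptic energy bookkeeping of the same flavor as Lemmas \ref{iiop}–\ref{b2bb}; the only point requiring a moment's care is ensuring every H\"older triple (or quadruple) of exponents sums to $1$ and that the lower-order-in-$1/c$ coefficients are absorbed uniformly using the standing assumption $c > \sqrt{\mu/m}$.
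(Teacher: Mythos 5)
Your proposal is correct and follows essentially the same route as the paper: subtract the two defining equations, test the difference equation with $\Phi_v-\Phi_w$, discard the nonnegative term $(q/c)^2\int u^2(\Phi_v-\Phi_w)^2dx$, and close with H\"older, Sobolev and Lemma \ref{aq1}; the only difference is your symmetric splitting (coefficient $v^2$ on the left, $\Phi_w$ on the right), which produces $\|w\|_{H^1}^2$ in place of $\|v\|_{H^1}^2$ and, as you correctly observe, the stated form follows by interchanging the roles of $v$ and $w$, the paper itself using the other splitting (coefficient $w^2$, with $\Phi_v$ on the right) to get $\|v\|_{H^1}^2$ directly. One bookkeeping slip to repair: the norm lists you wrote do not match your exponent lists (as written, $\tfrac13+\tfrac16+\tfrac16$ and $\tfrac13+\tfrac16+\tfrac16+\tfrac16$ do not sum to $1$ on $\R^3$); the correct pairings are $\|v-w\|_{L^3}\|v+w\|_{L^2}\|\Phi_v-\Phi_w\|_{L^6}$ for the first term and $\|v-w\|_{L^3}\|v+w\|_{L^3}\|\Phi_w\|_{L^6}\|\Phi_v-\Phi_w\|_{L^6}$ for the second, and since both $\|v+w\|_{L^2}$ and $\|v+w\|_{L^3}$ are controlled by $\|v+w\|_{H^1}$, the conclusion is unchanged.
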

\begin{proof}
Since $\Phi_u$ satisfies
\[
-\Delta \Phi_u+\frac{q^2}{c^2}u^2\Phi_u=- q\Big(m-\frac{\mu }{c^2}\Big) u^2 \mbox{ in } \R^3, 
\]
we have 
\[
-\Delta (\Phi_v-\Phi_w)+\frac{q^2}{c^2}w^2(\Phi_v-\Phi_w)=-\frac{q^2}{c^2}(v^2-w^2)\Phi_v- q\Big(m-\frac{\mu }{c^2}\Big)( v^2-w^2) \mbox{ in } \R^3. 
\]
Multiplying $(\Phi_v-\Phi_w)$ to the above equation and then integrating over $\R^3$, we have
\begin{align*}
\int_{\R^3}&|\nabla (\Phi_v-\Phi_w)|^2dx\\
&\le\int_{\R^3}-\frac{q^2}{c^2}(v^2-w^2)\Phi_v(\Phi_v-\Phi_w)- q\Big(m-\frac{\mu }{c^2}\Big)( v^2-w^2)(\Phi_v-\Phi_w)dx\\
&\le \frac{q^2}{c^2}\|v+w\|_{L^3}\|v-w\|_{L^3}\|\Phi_v\|_{L^6}\|\Phi_v-\Phi_w\|_{L^6}\\
&\qquad +q(m-\frac{\mu }{c^2})\|v+w\|_{L^2}\|v-w\|_{L^3}\|\Phi_v-\Phi_w\|_{L^6}\\
&\le {C_1} (\|\Phi_v\|_{D^{1,2}} +1)\|\Phi_v-\Phi_w\|_{D^{1,2}}\|v+w\|_{H^1}\|v-w\|_{L^3}.
\end{align*}
where $C_1=C_1(q,m,\mu)$ is a positive constant. Then, by Lemma \ref{aq1}, for $c>\sqrt{\frac{\mu}{m}}$, 
\[
\|\Phi_v-\Phi_w\|_{D^{1,2}}\le  {C} (\|v\|_{H^1}^2 +1) \|v+w\|_{H^1}\|v-w\|_{L^3},
\]
where $C=C(q,m,\mu)$ is a positive constant.
\end{proof} 

\begin{lem}\label{yyy1}
\[
\|\Phi_v-\phi_w\|_{D^{1,2}}\le C\Big(\frac{1}{c^2}(\|v\|_{H^1}^2+1)\|v\|_{H^1}^2+\|v+w\|_{H^1}\|v-w\|_{L^3}\Big),
\]where $C=C(q,m,\mu)$ is a positive constant.
\end{lem}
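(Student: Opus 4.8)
The plan is to derive the elliptic equation satisfied by the difference $\Phi_v-\phi_w$, test it against $\Phi_v-\phi_w$ in $D^{1,2}(\R^3)$, and estimate the resulting right-hand side term by term, in the same spirit as Lemmas \ref{b2bb} and \ref{aaq1}. Since $\Phi_v$ solves $-\Delta\Phi_v+\frac{q^2}{c^2}v^2\Phi_v=-q\bigl(m-\frac{\mu}{c^2}\bigr)v^2$ while $\phi_w$ solves $-\Delta\phi_w=-qmw^2$, subtracting and rearranging so that the $\Phi_v$ (rather than $\phi_w$) term is retained gives
\[
-\Delta(\Phi_v-\phi_w)=-\frac{q^2}{c^2}v^2\Phi_v+\frac{q\mu}{c^2}v^2-qm(v-w)(v+w)\qquad\text{in }\R^3 .
\]
Multiplying by $\Phi_v-\phi_w$ and integrating over $\R^3$ puts $\|\Phi_v-\phi_w\|_{D^{1,2}}^2$ on the left-hand side; if this quantity vanishes there is nothing to prove, so I may assume it is positive.

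Next I would bound the three terms on the right using H\"older's inequality together with the Sobolev embedding $D^{1,2}(\R^3)\hookrightarrow L^6(\R^3)$. The last term is controlled by $qm\|v+w\|_{L^2}\|v-w\|_{L^3}\|\Phi_v-\phi_w\|_{L^6}\le C\|v+w\|_{H^1}\|v-w\|_{L^3}\|\Phi_v-\phi_w\|_{D^{1,2}}$, which produces the second summand on the right-hand side of the claimed inequality. The term $\frac{q\mu}{c^2}\int_{\R^3}v^2(\Phi_v-\phi_w)\,dx$ is bounded by $\frac{C}{c^2}\|v\|_{L^{12/5}}^2\|\Phi_v-\phi_w\|_{L^6}\le\frac{C}{c^2}\|v\|_{H^1}^2\|\Phi_v-\phi_w\|_{D^{1,2}}$. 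For the term $\frac{q^2}{c^2}\int_{\R^3}v^2\Phi_v(\Phi_v-\phi_w)\,dx$, I would first apply H\"older with exponents $(3/2,6,6)$ to obtain the bound $\frac{C}{c^2}\|v\|_{L^3}^2\|\Phi_v\|_{L^6}\|\Phi_v-\phi_w\|_{L^6}$, and then invoke Lemma \ref{aq1} (valid for $c>\sqrt{\mu/m}$) to estimate $\|\Phi_v\|_{L^6}\le C\|\Phi_v\|_{D^{1,2}}\le Cq\bigl(m-\tfrac{\mu}{c^2}\bigr)\|v\|_{H^1}^2\le Cqm\|v\|_{H^1}^2$; altogether this term is at most $\frac{C}{c^2}\|v\|_{H^1}^4\|\Phi_v-\phi_w\|_{D^{1,2}}$.

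Finally, collecting the three estimates, dividing through by $\|\Phi_v-\phi_w\|_{D^{1,2}}$, and writing $\frac{C}{c^2}\bigl(\|v\|_{H^1}^4+\|v\|_{H^1}^2\bigr)=\frac{C}{c^2}\bigl(\|v\|_{H^1}^2+1\bigr)\|v\|_{H^1}^2$ yields the asserted bound with $C=C(q,m,\mu)$. There is no serious obstacle here: the only points requiring a little care are the rearrangement of the source term so that only $\|v\|_{H^1}$ (and not $\|w\|_{H^1}$) enters the $c^{-2}$ contributions --- which is precisely why one keeps $\Phi_v$ rather than $\phi_w$ in the first source term --- and the bookkeeping of H\"older exponents, together with the elementary observation $m-\mu/c^2<m$ that lets one absorb the $c$-dependence of the constant coming from Lemma \ref{aq1}.
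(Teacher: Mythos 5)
Your proposal is correct and follows essentially the same route as the paper: subtract the two elliptic equations keeping the $\Phi_v$ term, test with $\Phi_v-\phi_w$, apply H\"older with the same exponents ($\|v^2\|_{L^{3/2}}$, $\|v^2\|_{L^{6/5}}$, and $\|v+w\|_{L^2}\|v-w\|_{L^3}$) together with the Sobolev embedding, and finish with Lemma \ref{aq1} to control $\|\Phi_v\|_{D^{1,2}}$. No gaps; the only implicit point, also present in the paper, is that invoking Lemma \ref{aq1} presupposes $c>\sqrt{\mu/m}$, which you correctly note.
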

\begin{proof}
Since $\phi_w$ and $\Phi_v$   satisfy
\[
-\Delta \phi_w=- qm w^2 \mbox{ in } \R^3 \ \ \mbox{ and  }\ \ -\Delta \Phi_v=-\frac{q^2}{c^2}v^2\Phi_v- q\Big(m-\frac{\mu }{c^2}\Big) v^2 \mbox{ in } \R^3
\]
respectively, we have
\[
-\Delta(\Phi_v-\phi_w)=-\frac{q^2}{c^2}v^2\Phi_v+q\frac{\mu }{c^2}  v^2-qm(v^2-w^2) \mbox{ in } \R^3. 
\]
We multiply $(\Phi_v-\phi_w)$ to the above equation and integrate over $\R^3$ to deduce
\begin{align*}
&\int_{\R^3}|\nabla (\Phi_v-\phi_w)|^2dx\\
&=\frac{1}{c^2}\int_{\R^3}(-q^2v^2\Phi_v+q\mu v^2)(\Phi_v-\phi_w)dx-qm\int_{\R^3}(v^2-w^2)(\Phi_v-\phi_w)dx\\
&\le \frac{1}{c^2}\|\Phi_v-\phi_w\|_{L^6}(q^2\|\Phi_v\|_{L^6}\|v^2\|_{L^{3/2}}+q\mu \|v^2\|_{L^{6/5}}) +qm \|\Phi_v-\phi_w\|_{L^6}\|v+w\|_{L^2}\|v-w\|_{L^3}\\
&\le C_1\|\Phi_v-\phi_w\|_{D^{1,2}}\Big(\frac{1}{c^2} ( \|\Phi_v\|_{D^{1,2}}\|v\|_{H^1}^2+ \|v\|_{H^1}^2) +\|v+w\|_{H^1}\|v-w\|_{L^3}\Big),
\end{align*}
where $C_1=C_1(q,m,\mu)$ is a positive constant. Then, by Lemma \ref{aq1}, we have
\[
\|\Phi_v-\phi_w\|_{D^{1,2}}\le C\Big(\frac{1}{c^2}(\|v\|_{H^1}^2+1)\|v\|_{H^1}^2+\|v+w\|_{H^1}\|v-w\|_{L^3}\Big),
\]where $C=C(q,m,\mu)$ is a positive constant.

\end{proof}


\begin{thebibliography}{00}
\bibitem{AP}
A. Azzollini, A. Pomponio, Ground state solutions for the nonlinear Schr\"odinger-Maxwell equations. J. Math. Anal. Appl. 345 (2008), no. 1, 90–108.

\bibitem{AP1}
A. Azzollini, A. Pomponio,  Ground state solutions for the nonlinear Klein-Gordon-Maxwell equations. Topol. Methods Nonlinear Anal. 35 (2010), no. 1, 33–42.

\bibitem{APP}
A. Azzollini, L. Pisani, A. Pomponio,   Improved estimates and a limit case for the electrostatic Klein-Gordon-Maxwell system. Proc. Roy. Soc. Edinburgh Sect. A 141 (2011), no. 3, 449–463.

\bibitem{BMS}
P. Bechouche, N. J. Mauser, S. Selberg, Nonrelativistic limit of Klein-Gordon-Maxwell to Schr\"odinger-Poisson. Amer. J. Math. 126 (2004), no. 1, 31–64.

\bibitem{BJ}
J. Byeon, L. Jeanjean,   Standing waves for nonlinear Schr\"odinger equations with a general nonlinearity. Arch. Ration. Mech. Anal. 185 (2007), no. 2, 185–200. 

\bibitem{BF}
V. Benci, D. Fortunato,  Solitary waves of the nonlinear Klein-Gordon equation coupled with the Maxwell equations. Rev. Math. Phys. 14 (2002), no. 4, 409–420.

\bibitem{C}
G. M. Coclite,   A multiplicity result for the nonlinear Schr\"odinger-Maxwell equations. Commun. Appl. Anal. 7 (2003), no. 2-3, 417–423.

\bibitem{CW}
M. Colin, T. Watanabe,  Standing waves for the nonlinear Schr\"odinger equation coupled with the Maxwell equation. Nonlinearity 30 (2017), no. 5, 1920–1947.

\bibitem{CHS}
W.  Choi, Y. Hong, J. Seok,  Optimal convergence rate and regularity of nonrelativistic limit for the nonlinear pseudo-relativistic equations. J. Funct. Anal. 274 (2018), no. 3, 695–722.

\bibitem{CS}
W. Choi, J. Seok, Nonrelativistic limit of standing waves for pseudo-relativistic nonlinear Schr\"odinger equations. J. Math. Phys. 57 (2016), no. 2, 021510, 15 pp.

\bibitem{DM1}
T. D'Aprile, D. Mugnai,   Solitary waves for nonlinear Klein-Gordon-Maxwell and Schr\"odinger-Maxwell equations. Proc. Roy. Soc. Edinburgh Sect. A 134 (2004), no. 5, 893–906.

\bibitem{DM}
T. D'Aprile, D. Mugnai, Non-existence results for the coupled Klein-Gordon-Maxwell equations. Adv. Nonlinear Stud. 4 (2004), no. 3, 307–322.

\bibitem{L}
P.-L. Lions,  The concentration-compactness principle in the calculus of variations. The locally compact case. II. Ann. Inst. H. Poincaré Anal. Non Linéaire 1 (1984), no. 4, 223–283. 

\bibitem{MNO}
S. Machihara, K. Nakanishi, T. Ozawa, Nonrelativistic limit in the energy space for nonlinear Klein-Gordon equations. Math. Ann. 322 (2002), no. 3, 603–621.

\bibitem{MN1}
N. Masmoudi, K. Nakanish, From nonlinear Klein-Gordon equation to a system of coupled nonlinear Schr\"odinger equations. Math. Ann. 324 (2002), no. 2, 359–389.

\bibitem{MN}
N. Masmoudi, K. Nakanishi, Nonrelativistic limit from Maxwell-Klein-Gordon and Maxwell-Dirac to Poisson-Schr\"odinger. Int. Math. Res. Not. (2003), no. 13, 697–734.

\bibitem{N}
K. Nakanishi, Nonrelativistic limit of scattering theory for nonlinear Klein-Gordon equations. J. Differential Equations 180 (2002), no. 2, 453–470.

\bibitem{P}
R. S. Palais, The principle of symmetric criticality. Comm. Math. Phys. 69  (1979), no. 1, 19-30.
 
\bibitem{R}
D. Ruiz, The Schr\"odinger-Poisson equation under the effect of a nonlinear local term. J. Funct. Anal. 237 (2006), no. 2, 655–674.

\bibitem{HL}
Q. Han, F. Lin, Elliptic Partial Differential Equations, second edition, Courant Lecture Notes in Mathematics, vol. 1, Courant Institute of Mathematical Sciences, American Mathematical Society, New York, Providence, RI, 2011.

\bibitem{Wang}
F. Wang, Ground-state solutions for the electrostatic nonlinear Klein–Gordon–Maxwell system. Nonlinear Anal. 74 (2011) 4796--4803.
\end{thebibliography}
\end{document}